\theoremstyle{plain}
\def\dis{\displaystyle}
\def\nd{\noindent}
\def\thend{\rule{3mm}{3mm}}
\newtheorem{claim}{Claim}[section]
\newtheorem{theorem}{Theorem}[section]
\newtheorem{lemma}{Lemma}[section]
\newtheorem{definition}{Definition}[section]
\newtheorem{corollary}{Corollary}[section]
\newtheorem*{theorem*}{Theorem}
\numberwithin{equation}{section}
\begin{document}
\title{Existence and concentration of positive solutions for a logarithmic Schr\"{o}dinger  equation via penalization method}
\author{ Claudianor O. Alves\footnote{C.O. Alves was partially supported by CNPq/Brazil  304804/2017-7.} \,\, and \,\, Chao Ji \footnote{C. Ji was partially supported by Shanghai Natural Science Foundation(18ZR1409100).}}

\maketitle

\begin{abstract}
In this  article we are concerned with the following logarithmic Schr\"{o}dinger equation
$$
\left\{
\begin{array}{lc}
-{\epsilon}^2\Delta u+ V(x)u=u \log u^2, & \mbox{in} \,\, \mathbb{R}^{N}, \\
u \in H^1(\mathbb{R}^{N}), & \;  \\
\end{array}
\right.
$$
where $\epsilon >0, N \geq 1$  and $V:\mathbb{R}^{N}\rightarrow \mathbb{R}$ is a continuous potential. Under a local assumption on the potential $V$, we use
the variational methods to prove the existence and concentration of  positive solutions  for the  above problem.
\end{abstract}

{\small \textbf{2000 Mathematics Subject Classification:} 35A15, 35J10; 35B09}

{\small \textbf{Keywords:} Variational method, Logarithmic Schr\"odinger  equation, Positive solutions.}

\section{Introduction}
 In the past few decades, the nonlinear elliptic equation
  $$
-{\epsilon}^2\Delta u+ V(x)u=f(u),  \,\, x\in \mathbb{R}^{N},
\eqno{(S_\epsilon)}
 $$
 where $N\geq 1$, $\epsilon>0$ is a positive parameter, $V, f$ are continuous functions verifying some assumptions, has been studied by many researchers.
  A basic motivation for the study of problem $(S_\epsilon)$ is to seek for the standing waves of the following nonlinear Schr\"odinger equation
$$
i \epsilon \frac{\partial  \Psi}{\partial t} = - \epsilon^{2}\Delta \Psi + (V(x	)+E)\Psi- f(\Psi), \quad\text{for}\,\, x\in \mathbb{R}^{N},
\eqno{(NLS)}
$$
namely, solution of the form $\Psi(x, t)=\exp(-i Et/\epsilon)u(x)$ with $u(x)$ is a real value function.
There is a broad literature on the existence and concentration of positive solutions for general
semilinear elliptic equations $(S_\epsilon)$ for the case $N\geq 1$, see for example, Floer and Weinstein \cite{fw}, Oh \cite{o1, o2}, Rabinowitz \cite{r}, Wang \cite{W},
Cingolani and Lazzo \cite{CL}, Ambrosetti, Badiale and Cingolani \cite{ABC},  Gui \cite{G}, del Pino and Felmer \cite{DF} and their references.

 In \cite{r}, by a variant of a mountain pass argument,  Rabinowitz proved the existence of positive solutions of problem $(S_\epsilon)$ for $\epsilon>0$ small, whenever
 $$
 V_{\infty}=\underset{\vert x\vert\rightarrow\infty}{\lim\inf}\, V(x)>\inf_{x\in \mathbb{R}^{N}} V(x)=V_{0}>0.
 $$
  Later, Wang \cite{W} used variational methods to show that these solutions concentrate at global minimum points of $V$ as $\epsilon\rightarrow 0$.
  In \cite{DF}, del Pino and Felmer  found solutions which concentrate around local minimum of $V$ by introducing a penalization method. More precisely,
  they assumed that there is an open and bounded set $\Lambda\subset \mathbb{R}^{N}$ such that
   $$
       0<V_{0}=\inf_{z\in\Lambda}V(z)<\min_{z\in\partial \Lambda}V(z).
   $$
In the above-mentioned papers, the authors assumed that the nonlinearity $f$ satisfies superlinear, subcritical growth conditions and the well-known Ambrosetti-Rabinowitz condition, this allow us to employ the variational methods for the class of $C^{1}$ functional to attach these problems.

  Recently, the logarithmic Schr\"odinger equation given by
$$
i \epsilon \partial_t \Psi = - \epsilon^{2}\Delta \Psi + (W(x	)+w)\Psi- \Psi \log|\Psi|^2, \ \Psi:[0, \infty) \times \mathbb{R}^{N} \rightarrow \mathbb{C}, \ N \geq 1,
$$
 has  also  received considerable attention. This class of equation has some important physical applications, such as quantum mechanics, quantum optics, nuclear physics, transport and diffusion phenomena, open quantum systems, effective quantum gravity, theory of superfluidity and Bose-Einstein condensation (see \cite{z} and the references therein). In its turn, standing waves solution, $\Psi$, for this logarithmic Schr\"odinger equation is related
 to the solutions of the equation
 $$
-{\epsilon}^2\Delta u+ V(x)u=u \log u^2, \,\, \mbox{in} \quad \mathbb{R}^{N}.
\eqno{(P_\epsilon)}
$$

Besides the importance in applications, the equation $(P_\epsilon)$ also raises many difficult mathematical problems.  The natural candidate for the associated energy functional would formally be the functional
\begin{equation}\label{lula}
\widehat{I}_\epsilon(u)=\frac{1}{2}\displaystyle \int_{\mathbb{R}^N}(\epsilon^{2}|\nabla u|^2+(V(x)+1)|u|^2)dx-\displaystyle \frac{1}{2}\int_{\mathbb{R}^N} u^2 \log u^2\,dx.
\end{equation}
It is easy to see that each critical point of $\widehat{I}_\epsilon$ is a solution of (\ref{lula}).
However, this functional is not well defined in $H^{1}(\mathbb{R}^N)$ because there is $u \in H^{1}(\mathbb{R}^N)$ such that $\int_{\mathbb{R}^N}u^{2}\log u^2 \, dx=-\infty$. In order to overcome this technical difficulty some authors have used different techniques to study the existence, multiplicity and concentration of the solutions under some assumptions on the potential $V(x)$, which can be seen in  \cite{AlvesdeMorais}, \cite{AlvesdeMoraisFigueiredo}, \cite{AlvesChao}, \cite{AlvesChao1}, \cite{squassina}, \cite{ASZ}, \cite{DZ}, \cite{cs},  \cite{sz}, \cite{sz2}, \cite{TZ},  \cite{WZ} and the references therein. In \cite{ASZ}, different from the previous contribution on the this subject, the authors directly faced the loss of the compactness and studied the existence of multiple solutions by using non-smooth critical point theory, which was also used in \cite{Squ} to establish the existence and concentration of the solutions for the quasi-linear elliptic equations. We also notice that the soliton dynamics behaviour for the logarithmic Schr\"odinger equations were studied by some mathematicians, see for example \cite{AS}. This class of problems are not fully solved as it depends on the regularity property of the functional.

In a recent paper \cite{AlvesdeMorais}, Alves and de Morais Filho established the existence and concentration of positive solutions to problem $(P_\epsilon)$, for $\epsilon>0$, by requiring that $V$ verifies the global assumption introduced by Rabinowitz \cite{r}
\begin{equation}\label{ps1}
V_{\infty}:=\lim_{\vert x\vert\rightarrow \infty}\, V(x)> \inf_{x\in \mathbb{R}^N}\, V(x)=V_{0}>-1.
\end{equation}

Later, Alves and Ji   \cite{AlvesChao} considered the multiple positive solutions to problem $(P_\epsilon)$  under the same assumption (\ref{ps1}). More precisely, it was proved that the "shape" of the graph of the function $V$ affects  the number of nontrivial solutions.

 It is quite natural to consider the existence and concentration results of the solutions for problem $(P_\epsilon)$ when the potential $V$ satisfies a local assumption.    Inspired by \cite{AlvesdeMorais, DF, szulkin},  the main purpose
of this paper is to investigate the existence and concentration of positive solutions of problem $(P_\epsilon)$ by combining a local assumption on $V$ and adapting the penalization method found in del Pino and Felmer \cite{DF}.

Throughout the paper, we make the following assumptions on the potential $V$:
\begin{itemize}
	\item[\rm ($V1$)] $V\in C(\mathbb{R}^N, \mathbb{R})$ and $\displaystyle \inf_{x\in\mathbb{R}^N}V(x)=V_{0}>-1$;

	\item[\rm ($V2$)]There exists an open and bounded set $\Lambda\subset \mathbb{R}^{N}$ satisfying
       $$
       -1<V_{0}=\inf_{x\in\Lambda}V(x)<\min_{x\in\partial \Lambda}V(x).
       $$
	
\end{itemize}

By a change of variable, we know that problem $(P_\epsilon)$ is equivalent to the problem
\begin{equation}\label{1'}
\left\{
\begin{array}{lc}
-\Delta u+ V(\epsilon x)u=u \log u^2, & \mbox{in} \quad \mathbb{R}^{N}, \\
u \in H^1(\mathbb{R}^{N}). & \;  \\
\end{array}
\right.
\end{equation}
\begin{definition}
For us, a positive solution of (\ref{1'}) means a positive function $u \in H_{\epsilon}$ such that $ u^2\log u^2 \in L^1(\mathbb{R}^{N})(i.e., J_{\epsilon}(u)<\infty)$  and
\begin{equation}
\displaystyle \int_{\mathbb{R}^N} (\nabla u  \nabla v +V(\epsilon x)u v)dx=\displaystyle \int_{\mathbb{R}^N} uv \log u^2dx,\,\, \mbox{for all }\, v \in C^\infty_0(\mathbb{R}^{N}).
\end{equation}
\end{definition}

We shall use the variational method found in Szulkin \cite{szulkin} to prove the existence of nontrivial solutions for problem (\ref{1'}). Here, we will show that any critical point of the associated energy functional
$$
J_{\epsilon}(u)=\dis\frac{1}{2}\int_{\mathbb{R}^N} \big (|\nabla u|^2+(V(\epsilon x) +1)|u|^2\big)dx-\dis\frac{1}{2}\int_{\mathbb{R}^N} u^2\log u^2dx,
$$
in the sub-differential sense, is a weak solution of (\ref{1'}) in $H^1(\mathbb{R}^{N})$. Aiming this approach, let us define the Banach space
$$
H_{\epsilon}:=\left\{u\in H^{1}(\mathbb{R}^N): \int_{\mathbb{R}^N} V(\epsilon x)\vert u\vert^{2}dx<\infty\right\}
$$
endowed with the norm
$$
\Vert u\Vert_{\epsilon}=\Big(\int_{\mathbb{R}^N} (\vert \nabla u\vert^{2}+(V(\epsilon x)+1)\vert  u\vert^{2})dx\Big)^{1/2}.
$$

The main result of this paper is the following:

\begin{theorem}\label{teorema}
Suppose that $V$ satisfies $(V1)-(V2)$. Then, there exists $\epsilon_{0}>0$ such that, for any $\epsilon\in (0, \epsilon_{0})$, the problem $(P_\epsilon)$
 has a positive solution $v_{\epsilon}$. Moreover, if $\eta_{\epsilon}\in \mathbb{R}^{N}$ is a global maximum point of $v_{\epsilon}$,
 we have
 $$
 \lim_{\epsilon\rightarrow 0}\,V(\eta_{\epsilon})=V_{0}.
 $$
\end{theorem}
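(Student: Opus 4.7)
The plan is to adapt the del Pino--Felmer penalization scheme to the non-smooth logarithmic setting, using Szulkin's critical point theory throughout. First, I would define a modified nonlinearity $g_\epsilon:\mathbb{R}^N\times\mathbb{R}\to\mathbb{R}$ that coincides with $s\log s^2$ on $\Lambda_\epsilon:=\{x\in\mathbb{R}^N:\epsilon x\in\Lambda\}$ but, outside $\Lambda_\epsilon$, is truncated by a linear function $\sigma s$ for a constant $\sigma\in(0,V_0+1)$ so that the reaction term stays strictly dominated by the potential. The resulting penalized functional $\widetilde{J}_\epsilon$ still decomposes, in the spirit of Alves--de Morais Filho, as $\Phi_\epsilon+\Psi_\epsilon$ with $\Phi_\epsilon$ convex and lower semicontinuous on $H_\epsilon$ and $\Psi_\epsilon$ of class $C^1$, so Szulkin's framework applies and critical points in the sub-differential sense are well defined.

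Next I would check the mountain-pass geometry of $\widetilde{J}_\epsilon$: non-degeneracy at the origin uses the local behavior $s^2\log s^2=o(s^2)$ near zero, while paths going to large negative energy are built by dilating a fixed compactly supported test function inside $\Lambda_\epsilon$. The Palais--Smale condition in the Szulkin sense is verified by proving that (PS) sequences are bounded (the truncation restores an Ambrosetti--Rabinowitz-type control outside $\Lambda_\epsilon$, while inside one uses standard estimates for $s^2\log s^2$) and that weak limits are strong limits thanks to the penalization damping the mass at infinity. This yields, for every $\epsilon>0$, a nontrivial solution $u_\epsilon$ of the penalized equation at mountain-pass level $c_\epsilon$, and by using translates of a ground state $w_0$ of the autonomous limit problem $-\Delta w + V_0 w=w\log w^2$ as comparison paths I would show $c_\epsilon\to c_{V_0}$ as $\epsilon\to 0$.

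The core step is to show that for $\epsilon$ small the penalization is inactive, so that $u_\epsilon$ solves the original equation. I would first perform a concentration-compactness analysis on the rescaled functions $w_\epsilon(x):=u_\epsilon(x+y_\epsilon)$, where $y_\epsilon\in\Lambda_\epsilon$ is chosen so that a positive fraction of the mass of $u_\epsilon$ lies in a fixed ball around $y_\epsilon$; the energy comparison $c_\epsilon\to c_{V_0}$ forces $V(\epsilon y_\epsilon)\to V_0$ and $w_\epsilon\to w_0$ strongly in $H^1(\mathbb{R}^N)$ for some ground state $w_0$ of the limit problem. Uniform $L^\infty$ bounds and exponential decay for $u_\epsilon$ centered at $y_\epsilon$ are then obtained by a Moser iteration, feasible because $|s\log s^2|\leq C_\delta(|s|^{1-\delta}+|s|^{1+\delta})$ makes the nonlinearity polynomially subcritical, combined with a linear supersolution/comparison argument for $-\Delta+c$. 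The decay forces $u_\epsilon\to 0$ uniformly on $\mathbb{R}^N\setminus\Lambda_\epsilon$, so the penalization is not activated; undoing the rescaling gives $v_\epsilon(\cdot)=u_\epsilon(\cdot/\epsilon)$ with maximum point $\eta_\epsilon=\epsilon y_\epsilon+o(\epsilon)$, whence $V(\eta_\epsilon)\to V_0$ by continuity.

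The main obstacle is the interaction between the non-smoothness of the functional and the pointwise estimates needed in the return-to-original-problem step: the Moser iteration, exponential decay, and barrier argument that deactivate the penalization all have to be carried out in the Szulkin framework, where the right-hand side $u\log u^2$ is only locally Lipschitz on compact subsets of $(0,\infty)$ and blows up logarithmically at the origin. A secondary difficulty is designing the penalization so that it preserves both the Szulkin decomposition (convex lower-semicontinuous plus $C^1$) and the mountain-pass geometry uniformly in $\epsilon$; the natural truncation $\min\{s\log s^2,\sigma s\}$ must be smoothed or absorbed into the $C^1$ piece $\Psi_\epsilon$ without breaking convexity of $\Phi_\epsilon$.
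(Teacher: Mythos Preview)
Your overall architecture---del Pino--Felmer penalization carried out in Szulkin's non-smooth framework, followed by concentration--compactness and Moser iteration to deactivate the penalization---matches the paper's. However, two of the steps you flag as ``obstacles'' are precisely where the paper's new ideas live, and your proposal does not resolve them.

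\textbf{Where to penalize.} You propose truncating $s\log s^2$ itself by $\min\{s\log s^2,\sigma s\}$ and then worrying about how to re-split this into a convex l.s.c.\ piece plus a $C^1$ piece. The paper avoids this entirely: it first performs the decomposition $\tfrac{1}{2}s^2\log s^2 = F_2(s)-F_1(s)$ with $F_1$ convex and $F_2\in C^1$ satisfying $|F_2'(s)|\le C|s|^{p-1}$, and then penalizes \emph{only the $C^1$ part} $F_2'$ outside $\Lambda_\epsilon$ (replacing it by $\tilde F_2'(s)=ls$ for $s\ge a_0$). The convex piece $F_1$ is left untouched, so the Szulkin structure is preserved automatically and no smoothing is needed. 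Your ``natural truncation'' would cut across both pieces and does not obviously leave a convex $F_1$.

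\textbf{Boundedness of (PS) sequences.} Your claim that ``the truncation restores an Ambrosetti--Rabinowitz-type control outside $\Lambda_\epsilon$, while inside one uses standard estimates for $s^2\log s^2$'' is where the argument breaks. For the unpenalized functional one has the clean identity $J_\epsilon(u)-\tfrac12 J_\epsilon'(u)u=\tfrac12\int|u|^2$, but the paper stresses that the modified functional $I_\epsilon$ \emph{loses} this identity, and no AR inequality with $\theta>2$ is available (for the logarithmic nonlinearity $F'(s)s-2F(s)=s^2$, which corresponds to $\theta=2$). The paper's substitute is a logarithmic Sobolev-type inequality $\int |u|^2\log|u|^2\le A+B\log\|u\|$, from which boundedness of any sequence with $I_\epsilon(u_n)$ bounded follows directly (Lemma~3.2 and Lemma~3.3). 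You should replace the AR step by this inequality.

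A minor point: you invoke exponential decay to kill the penalization, but the paper only proves (and only needs) uniform decay $\psi_n(x)\to 0$ as $|x|\to\infty$ via Moser iteration; once $u_{\epsilon_n}<a_0$ on $\mathbb{R}^N\setminus\Lambda_{\epsilon_n}$ the penalization is already inactive, so no barrier/comparison argument is required.
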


The proof of Theorem \ref{teorema} is inspired from \cite{AlvesdeMorais, DF, szulkin}, however we are working with the logarithmic Schr\"{o}dinger equation, whose the energy functional associated is not continuous, for this reason,  some estimates  for this problem are also very delicate and different from those used in the Schr\"{o}dinger equation $(S_\epsilon)$. Also for this reason, we shall modify the nonlinearity in a special way to work with a modified problem. Making
some estimates we prove that the solutions obtained for the modified problem are solutions of the original problem when $\epsilon>0$ is sufficient small.
On the other hand, a equality of the type $J_{\epsilon}(u)-\frac{1}{2}J'_{\epsilon}(u)u=\frac{1}{2}\int_{\mathbb{R}^N} |u|^{2}\, dx$ is very important for the study of the  logarithmic Schr\"odinger equations,  for example, in \cite{sz, cs}, the authors  used it and the logarithmic Sobolev inequality to verify the boundedness of $(PS)$ sequence. But,  the functional associated with the modified problem doesn't satisfy the equality above, so the proof of the boundedness of $(PS)$ sequence is a great challenge, and here we developed a new way to prove this boundedness, see Lemmas \ref{LI} and \ref{boundedness} for more details. Moreover, since the functional associated with the modified problem also lost some other good properties, it is difficult to verify the mountain pass geometry, see Lemma \ref{vicente}. The reader is invited to see that the way how we attach these problems in Section 3 is different of that explored in \cite{AlvesdeMorais, cs, sz}. After our paper was completed, we learned of some related work for the problem (\ref{1'}), see \cite{ZZ}. In that paper, the authors also considered the positive bound state solutions exist and concentrate as $\epsilon\rightarrow 0$ under a local assumption on the potential $V$. However, the approach of the present paper is completely different from one in \cite{ZZ}, and our method can be applied for the study of other problems in this field, for example, see \cite{AlvesChao3}. The plan of the paper is as follows: In Section 2 we show some preliminary results which can be used later on. In Section 3 we study the modified problem, this is a key point in our approach. Finally, in Section 4, we give the proof of Theorem \ref{teorema}.

\vspace{0.5 cm}

\noindent \textbf{Notation:} From now on in this paper, otherwise mentioned, we use the following notations:
\begin{itemize}
	\item $B_r(u)$ is an open ball centered at $u$ with radius $r>0$, $B_r=B_r(0)$.
	
	\item If $g$ is a measurable function, the integral $\int_{\mathbb{R}^N}g(z)dz$ will be denoted by $\int g(z)dz$.
	
	\item   $C$ denotes any positive constant, whose value is not relevant.
	
	\item  $|\,\,\,|_p$ denotes the usual norm of the Lebesgue space $L^{p}(\mathbb{R}^N)$, for $p \in [1,+\infty]$,
     $\Vert\,\,\,\Vert$ denotes the usual norm of the Sobolev space $H^{1}(\mathbb{R}^N)$.

	\item  For the measurable set $A\subset \mathbb{R}^N$, $\vert A\vert$ denotes the Lebesgue measure of the set $A$.
	
	\item  $H_c^{1}(\mathbb{R}^N)=\{u \in H^{1}(\mathbb{R}^N)\,:\, u \,\, \mbox{has compact support}\, \}.$
	
	\item $o_{n}(1)$ denotes a real sequence with $o_{n}(1)\to 0$ as $n \to +\infty$.
	
	\item $2^*=\frac{2N}{N-2}$ if $N \geq 3$ and $2^*=+\infty$ if $N=1, 2$.
\end{itemize}

\section{Preliminaries}\label{tpm}
Let us go back to the functional $J_{\epsilon}$. Following the approach explored in \cite{AlvesdeMorais,cs,sz}, due to the lack of smoothness of $J_{\epsilon}$, let us decompose it into a sum of a $C^1$ functional plus a convex lower semicontinuous functional, respectively. For $\delta>0$, let us define the following functions:

$$
F_1(s)=
\left\{ \begin{array}{lc}
0, & \; s= 0 \\
-\frac{1}{2}s^2\log s^2 & \; 0<\vert s\vert <\delta \\
-\frac{1}{2}s^2(
\log\delta^2+3)+2\delta|s|-\frac{1}{2}\delta^2,  & \; \vert s \vert \geq \delta
\end{array} \right.
$$

\noindent and
\begin{equation}\label{efes}
F_2(s)=
\frac{1}{2}s^2\log s^2+ F_1(s), \quad    s\in \mathbb{R}.
\end{equation}
It was proved in \cite{cs} and \cite{sz} that $F_1$ and $F_2$ verify the following properties:
\begin{equation}\label{eq1}
F_1, F_2 \in C^1(\mathbb{R},\mathbb{R}).
\end{equation}
If $\delta >0$ is small enough, $F_1$ is convex, even, $F_1(s)\geq 0$ for all $ s\in \mathbb{R}$ and
\begin{equation}\label{eq2}
F'_1(s)s\geq 0, \ s \in \mathbb{R}.
\end{equation}
For each fixed $p \in (2, 2^*)$, there is $C>0$ such that
\begin{equation}\label{eq5}
|F'_2(s)|\leq C|s|^{p-1}, \quad \forall s \in \mathbb{R}.
\end{equation}
Let us define
\begin{equation}\label{fi}
\Phi_\epsilon(u)=\frac{1}{2}\int_{\mathbb{R}^N} (|\nabla u|^{2}+(V(\epsilon x)+1))|u|^{2})\,dx-\displaystyle \int_{\mathbb{R}^N} F_2(u)\,dx,
\end{equation}
\noindent and
\begin{equation}\label{psi}
\Psi(u)=\displaystyle \int_{\mathbb{R}^N} F_1(u)\,dx,
\end{equation}
then
\begin{equation}\label{funcional}
J_\epsilon(u)=\Phi_\epsilon(u)+\Psi(u), \quad u \in H_{\epsilon}.
\end{equation}
Using the above information, it follows that $\Phi_\epsilon \in C^{1}(H^1(\mathbb{R}^{N}),\mathbb{R})$, $\Psi$ is convex and lower semicontinuous, but $\Psi$ is not a $C^1$ functional, since we are working on $\mathbb{R}^N$. Due to this fact, we will look for a critical point in the sub-differential. Here we
state some definitions that can be found in \cite{szulkin}.

\begin{definition} \label{definicao}
	
	Let $E$ be a Banach space, $E'$ be the dual space of $E$ and $\langle \cdot,\cdot \rangle$ be the duality paring between $E'$ and $E$. Let $J:E \to \mathbb{R}$ be a functional of the form $J(u)=\Phi(u)+\Psi(u)$, where $\Phi \in C^{1}(E,\mathbb{R})$ and $\Psi$ is convex and lower semicontinuous. Let us list some definitions:

	\noindent (i) The sub-differential $\partial J(u)$ of the functional $J$ at a point $u \in E$ is the following set
	
	\begin{equation}\label{subdif}
	\{w \in E': \langle \Phi'(u), v-u \rangle+\Psi(v)-\Psi(u)\geq \langle w, v-u\rangle, \ \forall v \in E\}.
	\end{equation}
	
	\noindent (ii) A critical point of $J$ is a point $u \in E$ such that $J(u)< + \infty$ and
	$0 \in \partial J(u)$,\textit{ i.e.}
	
	\begin{equation}\label{critical}
	\langle \Phi'(u), v-u\rangle+\Psi(v)-\Psi(u)\geq 0, \ \forall v \in E.
	\end{equation}
	
	\noindent (iii) A Palais-Smale sequence at level $d$ for $J$ is a sequence $(u_n)\subset E$ such that $J(u_n)\rightarrow d$ and there is a numerical sequence $\tau_n \rightarrow 0^+$ with
	\begin{equation}\label{psequence}
	\langle \Phi'(u_n), v-u_n\rangle+\Psi(v)-\Psi(u_n)\geq -\tau_n||v-u_n||, \ \forall v \in E.
	\end{equation}

	\noindent (iv) The functional $J$ satisfies the Palais-Smale condition at level $d$ $((PS)_d$ condition, for short$)$ if all Palais-Smale sequences at level $d$ has a convergent subsequence.
	
	\noindent (v) The effective domain of $J$ is the set $D(J)=\{u \in E: J(u)< +\infty\}.$
	
\end{definition}
To proceed further we gather and state below some useful results that leads to a better understanding of the problem and of its particularities. In what follows, for each $u \in D(J_\epsilon)$, we set the functional $J'_\epsilon(u):H_c^{1}(\mathbb{R}^N) \to \mathbb{R}$ given by
$$
\langle J'_\epsilon(u),z\rangle=\langle \Phi_\epsilon'(u),z\rangle-\int F'_1(u)z\,dx, \quad \forall z \in H_c^{1}(\mathbb{R}^N)
$$
and define
$$
\|J'_\epsilon(u)\|=\sup\left\{\langle J'_\epsilon(u),z\rangle\,:\, z \in H_c^{1}(\mathbb{R}^N) \quad \mbox{and} \quad \|z\|_\epsilon \leq 1 \right\}.
$$
If $\|J'_\epsilon(u)\|$ is finite, then $J_\epsilon'(u)$ may be extended to a bounded operator in $H_{\epsilon}$, and so,  it can be seen as an element of $H_{\epsilon}'$.

\begin{lemma} \label{lema} Let $J_\epsilon$ satisfy  (\ref{funcional}), then:\\
	\noindent (i) If $u \in D(J_{\epsilon})$ is a critical point of $J_\epsilon$, then
	$$
	\langle \Phi_\epsilon'(u), v-u \rangle +\Psi(v)-\Psi(u)\geq 0, \quad  \forall v \in H_{\epsilon},
	$$
	or equivalently
	$$
	\int \nabla u \nabla (v-u)\, dx + \int (V(\epsilon x)+1)u(v-u)\,dx + \int F_1(v)\, dx - \int F_1(u)\, dx \geq \int F'_2(u)(v-u)\,dx,  \forall v \in H_{\epsilon}.
	$$
	\noindent (ii) For each $u \in D(J_\epsilon)$ such that $\|J'_\epsilon(u)\|< +\infty$, we have $\partial J_\epsilon(u) \not= \emptyset$, that is, there is $w \in  H_{\epsilon}'$, which is denoted by $w=J'_\epsilon(u)$, such that
	$$
	\langle \Phi_\epsilon'(u), v-u \rangle + \int F_1(v)\, dx - \int F_1(u)\, dx \geq \langle w,v-u \rangle, \quad  \forall v \in H_{\epsilon}, \,\, ( \mbox{see} \, \cite{sz2,TZ} )
	$$

	\noindent 	(iii) If a function $u \in D(J_\epsilon)$ is a critical point of $J_\epsilon$, then $u$ is a solution of (\ref{1'}) ( (i) in Lemma 2.4, \cite{cs}).
	
	\noindent 	(iv) If  $(u_n)\subset H_{\epsilon}$ is a Palais-Smale sequence, then
	\begin{equation} \label{ps}
	\langle J'_\epsilon(u_n), z \rangle=o_n(1)\|z\|_\epsilon , \quad \forall z \in H_c^{1}(\mathbb{R}^N).
	\end{equation}
	
	(see (ii) in Lemma 2.4, \cite{cs}).
	
	\noindent	(v) If $\Omega$ is a bounded domain with regular boundary, then $\Psi$ (and hence $J_\epsilon$) is of class $C^1$ in $H^{1}(\Omega)$ (Lemma 2.2 in \cite{sz}). More precisely, the functional
	$$
	\Psi(u)=\int_{\Omega}F_1(u)\,dx, \quad \forall u \in H^{1}(\Omega)
	$$
	belongs to $C^{1}(H^{1}(\Omega),\mathbb{R})$. 	
\end{lemma}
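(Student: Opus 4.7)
The plan is to view Lemma \ref{lema} as a compilation of standard facts from Szulkin's sub-differential framework, applied to the specific decomposition $J_\epsilon=\Phi_\epsilon+\Psi$ exhibited in (\ref{funcional}). Items (iii), (iv), (v) are essentially transcriptions of results already proved in \cite{cs,sz,sz2,TZ}, so my job there is to verify that the potential $V(\epsilon x)$ enters those arguments in the same benign manner as $V(x)$; this requires no change. The genuinely new work is verifying (i) and (ii), both of which are one-step unpackings of the definitions.

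For (i), by Definition \ref{definicao}(ii) a critical point satisfies $\langle \Phi_\epsilon'(u),v-u\rangle+\Psi(v)-\Psi(u)\ge 0$ for all $v\in H_\epsilon$. Substituting the explicit formulas
$$
\langle \Phi_\epsilon'(u),v-u\rangle=\int \nabla u\cdot\nabla(v-u)\,dx+\int(V(\epsilon x)+1)u(v-u)\,dx-\int F_2'(u)(v-u)\,dx,
$$
which is well defined since $F_2'$ has subcritical polynomial growth by (\ref{eq5}), together with $\Psi(v)-\Psi(u)=\int F_1(v)-F_1(u)\,dx$, one obtains both displayed inequalities by simple rearrangement. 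For (ii), assume $\|J_\epsilon'(u)\|<\infty$, so that the linear form $z\mapsto\langle \Phi_\epsilon'(u),z\rangle-\int F_1'(u)z\,dx$ on $H_c^{1}(\mathbb{R}^N)$ is $\|\cdot\|_\epsilon$-bounded and extends uniquely by density to some $w\in H_\epsilon'$. Using pointwise convexity of $F_1$, which is guaranteed by (\ref{eq1}) and the discussion after it, we have
$$
F_1(v)-F_1(u)\ge F_1'(u)(v-u)\quad\text{a.e. in }\mathbb{R}^N,
$$
and integration gives $\Psi(v)-\Psi(u)\ge\int F_1'(u)(v-u)\,dx$ for $v\in D(J_\epsilon)$. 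Combining this with the definition of $w$ produces
$$
\langle \Phi_\epsilon'(u),v-u\rangle+\int F_1(v)-F_1(u)\,dx\ge\langle w,v-u\rangle,
$$
first for $v-u\in H_c^{1}(\mathbb{R}^N)$ and then, by density together with the lower semicontinuity of $\Psi$, for all $v\in H_\epsilon$. This is precisely (\ref{subdif}) with $w\in\partial J_\epsilon(u)$.

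For (iii) and (iv), one plugs the admissible test perturbations $v=u\pm t\varphi$, $\varphi\in H_c^{1}(\mathbb{R}^N)$, into the critical-point and Palais--Smale inequalities from Definition \ref{definicao}, divides by $|t|$, and lets $t\to 0^+$; convexity of $F_1$ together with dominated convergence gives the one-sided expansion $\Psi(u\pm t\varphi)-\Psi(u)=\pm t\int F_1'(u)\varphi\,dx+o(t)$, and the two-sided estimate then recovers the equalities stated in (iii) and (iv) exactly as in Lemma 2.4 of \cite{cs}. Item (v) is Lemma 2.2 of \cite{sz} applied to $H^{1}(\Omega)$: on a bounded $\Omega$ the function $F_1$ has at-worst $s^2\log s^2$ growth near zero and quadratic growth beyond $\delta$, so the Nemytskii operator $u\mapsto\int_\Omega F_1(u)\,dx$ is $C^1$ by Sobolev embedding on bounded domains and dominated convergence. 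The main obstacle I anticipate is bookkeeping -- checking that $\int F_1'(u)(v-u)\,dx$ is absolutely integrable when $u\in D(J_\epsilon)$ and $v-u\in H_\epsilon$ -- but this is controlled by the linear growth of $F_1'$ outside $[-\delta,\delta]$ combined with the logarithmic-Sobolev bound on $|s\log s^2|$ near zero, both of which are standard in this literature.
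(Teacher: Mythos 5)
The paper offers no proof of this lemma at all: it is a compilation of facts quoted from Szulkin's framework and from \cite{cs}, \cite{sz}, \cite{sz2}, \cite{TZ}, so there is no internal argument to compare against. Your sketch reconstructs the standard proofs those references use, and in outline it is right: (i) is Definition \ref{definicao}(ii) with $\Phi_\epsilon'$ and $\Psi$ written out, (iii)--(iv) follow by testing the critical-point and Palais--Smale inequalities with $v=u\pm t\varphi$, $\varphi\in H^1_c(\mathbb{R}^N)$, and (v) is Lemma 2.2 of \cite{sz}.

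Two points in your treatment of (ii) need repair. First, a sign: you copy the paper's displayed definition $\langle J'_\epsilon(u),z\rangle=\langle\Phi'_\epsilon(u),z\rangle-\int F_1'(u)z\,dx$, but the convexity inequality $F_1(v)-F_1(u)\ge F_1'(u)(v-u)$ produces the subgradient inequality of (ii) only for the functional $z\mapsto\langle\Phi'_\epsilon(u),z\rangle+\int F_1'(u)z\,dx$; with the minus sign, your step ``combining this with the definition of $w$'' would require $\int F_1'(u)(v-u)\,dx\ge-\int F_1'(u)(v-u)\,dx$, which is false in general. The plus sign is evidently the intended convention (it is the one that makes the identity $J'_\epsilon(u)u=\int(|\nabla u|^{2}+V(\epsilon x)|u|^2)\,dx-\int u^{2}\log u^2\,dx$ in Corollary 2.1 correct, since $F_2'(s)s-F_1'(s)s=s^2\log s^2+s^2$), so the minus sign is a typo you should not reproduce inside the argument. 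Second, your passage from $v-u\in H^1_c(\mathbb{R}^N)$ to all $v\in H_\epsilon$ ``by density together with the lower semicontinuity of $\Psi$'' points the wrong way: $\Psi(v_k)$ sits on the large side of the inequality, so lower semicontinuity along an arbitrary approximating sequence gives nothing in the limit. The standard fix is to choose the specific approximations $v_k=u+\varphi_k(v-u)$ with cutoffs $0\le\varphi_k\le1$, $\varphi_k\to 1$: pointwise $v_k$ is a convex combination of $u$ and $v$, so $F_1(v_k)\le(1-\varphi_k)F_1(u)+\varphi_kF_1(v)\le F_1(u)+F_1(v)\in L^1$ whenever $\Psi(v)<\infty$ (the case $\Psi(v)=+\infty$ is trivial), and dominated convergence gives $\Psi(v_k)\to\Psi(v)$ while $\langle\Phi_\epsilon'(u),v_k-u\rangle\to\langle\Phi_\epsilon'(u),v-u\rangle$ and $\langle w,v_k-u\rangle\to\langle w,v-u\rangle$ by continuity. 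With these two repairs your argument is exactly the one in \cite{sz2,TZ}, which is what the paper implicitly relies on.
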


As a consequence of the above proprieties, we have  the following results whose the proofs can be found in \cite{AlvesdeMorais}.
\begin{lemma}
	If $u \in D(J_\epsilon)$ and $\|J'_\epsilon(u)\|< +\infty$, then $F^{'}_1(u)u \in L^1(\mathbb{R}^{N})$.
\end{lemma}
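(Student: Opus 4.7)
My plan is to combine the sub-differential information furnished by Lemma 2.1(ii) with a careful choice of cutoff test functions, using the non-negativity of $F'_1(s)s$ to conclude integrability by monotone convergence.

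The first step is to extract a pointwise identity relating $F'_1(u)$ to the linear operators $\Phi'_\epsilon(u)$ and $w:=J'_\epsilon(u)$. Since $\|J'_\epsilon(u)\|<+\infty$, Lemma 2.1(ii) gives the inequality
$$
\langle \Phi'_\epsilon(u),v-u\rangle+\int F_1(v)\,dx-\int F_1(u)\,dx\geq \langle w,v-u\rangle,\quad \forall v\in H_\epsilon.
$$
Testing with $v=u+th$ for arbitrary $h\in H_c^1(\mathbb{R}^N)$ and $t\in\mathbb{R}$ with $|t|$ small, I would exploit the $C^1$ regularity of $F_1$ together with the linear growth estimate $|F'_1(s)|\leq C(1+|s|)$ (which is straightforward from the piecewise definition of $F_1$, noting that $-s\log s^2\to 0$ as $s\to 0$). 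On the compact support of $h$ this bound and dominated convergence yield
$$
\lim_{t\to 0}\frac{1}{t}\Bigl(\int F_1(u+th)\,dx-\int F_1(u)\,dx\Bigr)=\int F'_1(u)h\,dx.
$$
Dividing the sub-differential inequality by $t>0$ and by $t<0$, taking the two one-sided limits and combining yields the equality
$$
\int F'_1(u)h\,dx=\langle \Phi'_\epsilon(u),h\rangle-\langle w,h\rangle,\qquad \forall h\in H_c^1(\mathbb{R}^N).
$$

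The second step is a cutoff argument. Let $\phi_R\in C_c^\infty(\mathbb{R}^N)$ satisfy $0\leq\phi_R\leq 1$, $\phi_R\equiv 1$ on $B_R$, $\phi_R\equiv 0$ outside $B_{2R}$, and $|\nabla\phi_R|\leq C/R$. Then $h_R:=u\phi_R\in H_c^1(\mathbb{R}^N)$ and a direct expansion of $|\nabla(u\phi_R)|^2$ shows
$$
\|u\phi_R\|_\epsilon^2\leq 2\|u\|_\epsilon^2+\frac{2C^2}{R^2}|u|_2^2\leq C_0\|u\|_\epsilon^2,
$$
uniformly for $R\geq 1$. Substituting $h=h_R$ in the identity of the previous paragraph and using both the boundedness of $\Phi'_\epsilon(u)$ on $H_\epsilon$ and the definition of $\|J'_\epsilon(u)\|$, I obtain
$$
\int F'_1(u)\,u\,\phi_R\,dx=\langle \Phi'_\epsilon(u),u\phi_R\rangle-\langle w,u\phi_R\rangle\leq\bigl(\|\Phi'_\epsilon(u)\|_{H'_\epsilon}+\|J'_\epsilon(u)\|\bigr)\|u\phi_R\|_\epsilon\leq M,
$$
with $M$ independent of $R$.

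Finally, since property (2.2) gives $F'_1(u)u\geq 0$ pointwise and $F'_1(u)u\phi_R\nearrow F'_1(u)u$ as $R\to\infty$, the monotone convergence theorem yields
$$
\int F'_1(u)u\,dx=\lim_{R\to\infty}\int F'_1(u)u\phi_R\,dx\leq M<+\infty,
$$
which is the desired conclusion. The delicate point is the first step: one must be sure that $F'_1(u)h\in L^1$ for $h\in H_c^1$ so that the directional derivative has meaning, and that the difference quotient admits an integrable dominating function on the support of $h$. Both reduce to the linear growth of $F'_1$ together with $u\in L^2_{loc}$; no further compactness or regularity of $u$ is required.
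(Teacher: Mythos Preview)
Your argument is correct and is essentially the standard one; the paper does not prove the lemma itself but refers to \cite{AlvesdeMorais}, where the same cutoff-plus-monotone-convergence scheme is used. One simplification: your entire first step is redundant in this paper's setup, since the identity you derive is precisely the paper's \emph{definition} of $J'_\epsilon(u)$ on $H_c^{1}(\mathbb{R}^N)$ (see the paragraph just before Lemma~2.1), so one may start immediately from
\[
\int F'_1(u)\,u\phi_R\,dx=\langle \Phi'_\epsilon(u),u\phi_R\rangle-\langle J'_\epsilon(u),u\phi_R\rangle
\]
and proceed directly to the uniform bound and monotone convergence. (Incidentally, differentiating the sub-differential inequality actually yields $\int F'_1(u)h\,dx=\langle w,h\rangle-\langle \Phi'_\epsilon(u),h\rangle$, the opposite sign to what you wrote; this is harmless here because Step~2 only uses the absolute value of the right-hand side.)
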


An immediate consequence of the last lemma is the following.

\begin{corollary} \label{C1} For each $u \in D(J_\epsilon) \setminus \{0\}$ with $\|J'_\epsilon(u)\|< +\infty$, we have that
	$$
	J'_\epsilon(u)u=\int (|\nabla u|^{2}+V(\epsilon x)|u|^2)\,dx-\int u^{2} \log u^2\,dx
	$$
	and
	$$
	J_{\epsilon}(u)-\frac{1}{2}J'_{\epsilon}(u)u=\frac{1}{2}\int |u|^{2}\, dx.
	$$

\end{corollary}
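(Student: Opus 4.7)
The plan is to exploit the decomposition $J_\epsilon=\Phi_\epsilon+\Psi$, where $\Phi_\epsilon\in C^1(H_\epsilon,\mathbb{R})$ and $\Psi(u)=\int F_1(u)\,dx$, together with the pointwise identities
$$
F_2(s)-F_1(s)=\tfrac{1}{2} s^2\log s^2, \qquad F'_2(s)s-F'_1(s)s=s^2\log s^2+s^2,
$$
both consequences of the defining relation (\ref{efes}). Since $\|J'_\epsilon(u)\|<+\infty$ by hypothesis, the linear functional $z\mapsto\langle J'_\epsilon(u),z\rangle$ extends uniquely by continuity from $H^1_c(\mathbb{R}^N)$ to an element of $H_\epsilon'$, and the task reduces to identifying this extension at $z=u$.

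Because $\langle J'_\epsilon(u),\cdot\rangle$ is defined a priori only on compactly supported test functions, I would first approximate $u$ by a sequence in $H^1_c(\mathbb{R}^N)$. Pick smooth cutoffs $\eta_n:\mathbb{R}^N\to[0,1]$ with $\eta_n\equiv 1$ on $B_n$, $\mathrm{supp}\,\eta_n\subset B_{n+1}$, and $|\nabla\eta_n|\le C$, and set $u_n:=\eta_n u\in H^1_c(\mathbb{R}^N)$. A routine cutoff estimate using $u\in H_\epsilon$ and dominated convergence yields $u_n\to u$ strongly in $H_\epsilon$. By continuity of the extension, $\langle J'_\epsilon(u),u_n\rangle\to\langle J'_\epsilon(u),u\rangle$.

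The main work is to pass to the limit in each piece of
$$
\langle J'_\epsilon(u),u_n\rangle=\langle\Phi'_\epsilon(u),u_n\rangle+\int F'_1(u)u_n\,dx.
$$
Strong $H_\epsilon$-convergence handles $\int(\nabla u\cdot\nabla u_n+(V(\epsilon x)+1)uu_n)\,dx\to\int(|\nabla u|^2+(V(\epsilon x)+1)|u|^2)\,dx$. For $\int F'_2(u)u_n\,dx$, the subcritical growth (\ref{eq5}) together with the Sobolev embedding $H_\epsilon\hookrightarrow L^p(\mathbb{R}^N)$ supplies the dominator $|F'_2(u)u_n|\le C|u|^p\in L^1(\mathbb{R}^N)$, so dominated convergence gives the limit $\int F'_2(u)u\,dx$. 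For $\int F'_1(u)u_n\,dx$, property (\ref{eq2}) gives $F'_1(u)u\ge 0$, while $u_n=\eta_n u$ with $0\le\eta_n\le 1$ forces $0\le F'_1(u)u_n\le F'_1(u)u$; by Lemma 2.2 this majorant lies in $L^1(\mathbb{R}^N)$, and a third application of dominated convergence gives the limit $\int F'_1(u)u\,dx$.

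Combining these limits yields
$$
\langle J'_\epsilon(u),u\rangle=\int\bigl(|\nabla u|^2+(V(\epsilon x)+1)|u|^2\bigr)\,dx-\int F'_2(u)u\,dx+\int F'_1(u)u\,dx,
$$
and inserting the algebraic identity $F'_2(u)u-F'_1(u)u=u^2\log u^2+u^2$ collapses this to the first claim. The second identity is then a one-line subtraction from the explicit formula $J_\epsilon(u)=\tfrac{1}{2}\int(|\nabla u|^2+(V(\epsilon x)+1)|u|^2)\,dx-\tfrac{1}{2}\int u^2\log u^2\,dx$. The only genuinely delicate step is producing an integrable majorant for the $F'_1$-contribution, which is precisely the integrability $F'_1(u)u\in L^1(\mathbb{R}^N)$ supplied by Lemma 2.2; everything else is cutoff approximation and pointwise algebra.
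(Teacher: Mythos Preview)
Your argument is correct and is precisely the detailed justification the paper has in mind: the text merely declares the corollary ``an immediate consequence'' of Lemma~2.2 (the integrability of $F'_1(u)u$) and refers to \cite{AlvesdeMorais} for the details. Cutoff approximation together with dominated convergence, with Lemma~2.2 supplying the $L^1$ majorant for the $F'_1$-term, is exactly the intended route.
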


\begin{corollary} \label{C2} If $(u_n) \subset H_{\epsilon}$ is a $(PS)$ sequence for $J_\epsilon$, then $J_\epsilon'(u_n)u_n=o_n(1)\|u_n\|_\epsilon$. If $(u_n)$ is bounded, we have
	$$
	J_{\epsilon}(u_n)=J_{\epsilon}(u_n)-\frac{1}{2}J_{\epsilon}'(u_n)u_n+o_n(1)\|u_n\|_\epsilon=\frac{1}{2}\int |u_n|^{2}\, dx+o_n(1)\|u_n\|_\epsilon, \quad \forall n \in \mathbb{N}.
	$$
\end{corollary}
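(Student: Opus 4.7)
The plan is to combine the Palais--Smale relation of Lemma \ref{lema}(iv) with a cutoff approximation in order to test against $u_n$ itself, and then to invoke the algebraic identity from Corollary \ref{C1} for the second assertion.

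For the first assertion, Lemma \ref{lema}(iv) furnishes $\tau_n \to 0^+$ with $|\langle J'_\epsilon(u_n), z\rangle| \leq \tau_n \|z\|_\epsilon$ for every $z \in H_c^1(\mathbb{R}^N)$. Since $u_n$ need not have compact support, I would fix a cutoff $\varphi_R \in C_c^\infty(\mathbb{R}^N)$ with $\varphi_R \equiv 1$ on $B_R$, $\varphi_R \equiv 0$ off $B_{2R}$, and $|\nabla \varphi_R|\leq C/R$, and test with $z_R := \varphi_R u_n \in H_c^1(\mathbb{R}^N)$. Expanding $\langle J'_\epsilon(u_n), z_R\rangle$ produces the cross term $\int u_n\nabla u_n\cdot\nabla\varphi_R$, which is controlled by $(C/R)|u_n|_2\,|\nabla u_n|_2$ and vanishes as $R\to\infty$, together with the localized integrals $\int \varphi_R |\nabla u_n|^2$, $\int (V(\epsilon x)+1)\varphi_R u_n^2$, $\int F'_2(u_n)\varphi_R u_n$ and $\int F'_1(u_n)\varphi_R u_n$. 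Each of these converges to the corresponding $\varphi_R$-free integral by dominated convergence, using $u_n \in H_\epsilon$, the estimate $|F'_2(s)|\leq C|s|^{p-1}$ from (\ref{eq5}) together with the Sobolev embedding for the $F'_2$ term, and the $L^1$-integrability of $F'_1(u_n)u_n$ (ensured for Palais--Smale sequences by the lemma preceding Corollary \ref{C1}, since $\|J'_\epsilon(u_n)\|<\infty$) together with the sign condition (\ref{eq2}) to dominate $\varphi_R F'_1(u_n)u_n$ by $F'_1(u_n)u_n$ for the $F'_1$ term. Since $\|z_R\|_\epsilon \to \|u_n\|_\epsilon$ by the same cutoff argument (using $V(\epsilon x)+1\geq 0$), passing to the limit in $|\langle J'_\epsilon(u_n), z_R\rangle|\leq \tau_n \|z_R\|_\epsilon$ and identifying the limit of the left-hand side with $J'_\epsilon(u_n)u_n$ via the formula of Corollary \ref{C1} yields $|J'_\epsilon(u_n)u_n| \leq \tau_n \|u_n\|_\epsilon = o_n(1)\|u_n\|_\epsilon$.

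The second assertion is then a direct consequence. Writing $J_\epsilon(u_n) = \bigl[J_\epsilon(u_n) - \tfrac{1}{2}J'_\epsilon(u_n)u_n\bigr] + \tfrac{1}{2}J'_\epsilon(u_n)u_n$ and applying the first assertion to bound the last term by $o_n(1)\|u_n\|_\epsilon$ delivers the first equality, while the identity $J_\epsilon(u_n) - \tfrac{1}{2}J'_\epsilon(u_n)u_n = \tfrac{1}{2}\int |u_n|^2\,dx$ of Corollary \ref{C1} delivers the second. The boundedness of $(u_n)$ is not strictly needed for these equalities, but it is what makes the remainder a genuine $o_n(1)$, which is the form in which the statement will be used later.

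I expect the only subtle step to be the passage to the limit in $\int F'_1(u_n)\varphi_R u_n$: no a priori polynomial growth estimate on $F'_1$ is available near the origin (it contains a logarithmic factor), and the argument is made to work precisely by the $L^1$-integrability of $F'_1(u_n)u_n$ recorded in the lemma immediately preceding Corollary \ref{C1}, with the sign condition $F'_1(s)s\geq 0$ from (\ref{eq2}) supplying the needed integrable envelope. Every other limit is a routine application of dominated convergence.
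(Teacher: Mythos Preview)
Your proposal is correct. The paper itself does not supply a proof of this corollary but defers it to \cite{AlvesdeMorais}; your cutoff approximation $z_R=\varphi_R u_n$ combined with Lemma~\ref{lema}(iv), the $L^1$-integrability of $F'_1(u_n)u_n$ from the lemma preceding Corollary~\ref{C1}, and the algebraic identity of Corollary~\ref{C1} is exactly the natural argument, and you have correctly isolated the only delicate limit (the $F'_1$ term) and the tool that handles it.
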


\begin{corollary} \label{C3} If $u \in H_{\epsilon}$ is a critical point of $J_\epsilon$ and $v \in H_{\epsilon}$ verifies $F'_1(u)v \in L^{1}(\mathbb{R}^N)$, then $J'_\epsilon(u)v=0$.
	
\end{corollary}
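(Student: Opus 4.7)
The plan is to exploit the Szulkin sub-differential characterization of a critical point by testing against the linear family $w_t = u + tv$ for small $t \in \mathbb{R}$, then obtaining two matching inequalities by sending $t \to 0^+$ and $t \to 0^-$. Since $u$ is a critical point of $J_\epsilon$, item (i) of Lemma \ref{lema} gives
$$
\langle \Phi_\epsilon'(u), w-u\rangle + \Psi(w) - \Psi(u) \geq 0, \quad \forall w \in H_\epsilon.
$$
For $v \in H_\epsilon$ with $F'_1(u)v \in L^1(\mathbb{R}^N)$, taking $w = u + tv \in H_\epsilon$ yields
$$
t\,\langle \Phi_\epsilon'(u), v\rangle + \int \big[F_1(u+tv) - F_1(u)\big]\, dx \geq 0.
$$

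The technical heart of the argument is to establish
$$
\lim_{t \to 0}\frac{1}{t}\int\big[F_1(u+tv) - F_1(u)\big]\, dx = \int F'_1(u)\,v\, dx. \quad (\star)
$$
Here I would use that $F_1$ is convex and of class $C^1$: pointwise, $\phi_x(t) := F_1(u(x) + tv(x))$ is convex in $t$, so the difference quotient $t \mapsto \tfrac{\phi_x(t) - \phi_x(0)}{t}$ is monotone in $t \neq 0$ and converges to $\phi_x'(0) = F_1'(u(x))v(x)$ as $t \to 0^{\pm}$. The tangent-line inequality and monotonicity of $F_1'$ give the bracket
$$
F_1'(u)\,v \leq \frac{F_1(u+tv) - F_1(u)}{t} \leq F_1'(u+tv)\,v \quad \text{for } t > 0,
$$
with the symmetric two-sided bracket for $t<0$. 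Because $F_1'$ is continuous with $F_1'(0)=0$ and $F_1'(s)$ has at most linear growth away from $0$ with an $|s\log|s||$-type behaviour near $0$, one obtains an $L^1$-dominant for $|F_1'(u+tv)\,v|$ uniform in $t \in [-t_0,t_0]$, using $u,v \in H_\epsilon \subset L^2 \cap L^{2^*}$ together with the hypothesis $F_1'(u)v \in L^1$. Dominated convergence then delivers $(\star)$.

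With $(\star)$ in hand, dividing the tested sub-differential inequality by $t>0$ and sending $t \to 0^+$ gives
$$
\langle \Phi_\epsilon'(u), v\rangle + \int F'_1(u)\,v\, dx \geq 0,
$$
while dividing by $t<0$ and sending $t \to 0^-$ reverses the inequality. Equality yields the conclusion
$$
\langle J'_\epsilon(u), v\rangle = \langle \Phi_\epsilon'(u), v\rangle + \int F'_1(u)\,v\, dx = 0.
$$

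The principal obstacle is the construction of the uniform $L^1$-majorant for $|F_1'(u+tv)\,v|$ on a neighborhood of $t=0$; the delicate point is the logarithmic singularity of $F_1'$ at the origin, which prevents a naive Lipschitz bound. I would handle this by splitting the integration domain according to $\{|u| \leq \delta\}$ and $\{|u| > \delta\}$, using the explicit formula for $F_1'$ and Young's inequality to absorb the singular behaviour into the integrable envelope provided by the hypothesis $F_1'(u)v \in L^1$ together with the $L^2$-control of $u$ and $v$. Once this uniform dominant is secured, the rest of the proof is a routine application of dominated convergence as sketched.
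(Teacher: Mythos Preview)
Your strategy of testing the Szulkin inequality with $w=u+tv$ and passing to the limit from both sides is natural, and the reduction to the limit $(\star)$ is correct. The gap lies in the promised $L^1$ majorant: on the set where both $|u|$ and $|u+tv|$ lie in $(0,\delta)$, the quantity $|F_1'(u+tv)v|$ behaves like $|(u+tv)\log(u+tv)^2|\,|v|$, whereas the hypothesis $F_1'(u)v\in L^1$ only controls $|u\log u^2|\,|v|$. Where $|u|\ll|tv|$ these are not comparable; the dominant contribution is essentially $t|v|^2|\log(t|v|)|$, and integrating forces $\int|v|^2\bigl|\log|v|\bigr|\,dx<\infty$, which is \emph{not} implied by $v\in H_\epsilon$ (for instance $v(x)=|x|^{-N/2}(\log|x|)^{-\beta}$ for large $|x|$ with $\beta\in(\tfrac12,1]$ lies in $H^1$ but fails this). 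Neither Young's inequality nor the $L^2$--$L^{2^*}$ control of $u,v$ rescues the bound, so in general one may have $\Psi(u+tv)=+\infty$ for every $t\neq 0$, and then $(\star)$ is simply not available.

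The paper itself gives no proof and defers to \cite{AlvesdeMorais}, where the argument proceeds by approximation rather than through a direct dominant. First, for $z\in H^1_c(\mathbb{R}^N)$ the integrand $F_1(u+tz)-F_1(u)$ is supported in the compact set $\operatorname{supp} z$, and on bounded domains $\Psi$ is of class $C^1$ (item (v) of the preliminary lemma on $J_\epsilon$), so your two-sided limit argument runs cleanly there and gives $\langle\Phi'_\epsilon(u),z\rangle+\int F_1'(u)z\,dx=0$. For general $v$, set $v_k=\phi_k v$ with $\phi_k$ a standard cutoff supported in $B_k$ and equal to $1$ on $B_{k/2}$; then $v_k\to v$ in $H_\epsilon$, so $\langle\Phi'_\epsilon(u),v_k\rangle\to\langle\Phi'_\epsilon(u),v\rangle$, while $|F_1'(u)v_k|\leq|F_1'(u)v|\in L^1$ and $F_1'(u)v_k\to F_1'(u)v$ a.e., hence dominated convergence gives $\int F_1'(u)v_k\,dx\to\int F_1'(u)v\,dx$. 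Passing to the limit in $0=\langle\Phi'_\epsilon(u),v_k\rangle+\int F_1'(u)v_k\,dx$ yields $J'_\epsilon(u)v=0$. This route uses the hypothesis $F_1'(u)v\in L^1$ exactly once, as the dominant in the final limit, and sidesteps the integrability of $\Psi(u+tv)$ entirely.
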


\section{The modified problem}\label{tpm}
In order to prove our main theorem, we modify problem (\ref{1'}) and then consider the existence of solutions to the modified problem. For our problem,
it is direct to consider $u\log u^{2}+u$ as $f$ appears in \cite{DF}, but it is easy to verify that it does not satisfy the basic assumptions of $f$ that were assumed in \cite{DF},
for example, $t\log t^{2}+t\neq o(t)$ as $t\rightarrow 0$.  Thus, we cannot apply directly del Pino and Felmer's method. By a simple observation,
it is easy to see that
\begin{equation*}
\frac{F'_2(s)}{s}\,\,\, \text{is nondecreasing for}\,\, s>0\quad\text{and}\quad\frac{F'_2(s)}{s}\,\, \text{is strictly increasing for}\,\, s>\delta,
\end{equation*}
\begin{equation*}
\lim_{s\rightarrow +\infty}\frac{F'_2(s)}{s}=+\infty,
\end{equation*}
and
\begin{equation*}
F'_2(s)\geq 0 \,\, \text{for}\,\, s>0\,\, \text{and}\,\,F'_2(s)> 0 \,\, \text{for}\,\, s>\delta.
\end{equation*}
In what follows we need to fix some notations. Let $l>0$ small such that $V_{0}+1\geq 2l$,  $a_{0}>0$ such that $\frac{F'_2(a_{0})}{a_{0}}=l$, it is clear that $a_{0}>\delta$. We define
$$
\tilde{F}'_2(t)=
\left\{ \begin{array}{lc}
F'_2(s)& \; 0\leq s\leq a_{0} \\
ls,  & \; s \geq a_{0}.
\end{array} \right.
$$
If $\chi_{\Lambda}$ denotes the characteristic function of the set $\Lambda$, we introduce the penalized nonlinearity $G_{2}': \mathbb{R}^N\times \mathbb{R}^{+}\rightarrow \mathbb{R}$ by setting
\begin{equation*}
G'_2(x, t)=\chi_{\Lambda}F'_2(t)+(1-\chi_{\Lambda})\tilde{F}'_2(t).
\end{equation*}
Since our attention is to find the positive solutions of problem, we shall consider the following modified problem
$$
-\Delta u+ (V(\epsilon x)+1)u=G'_2(\epsilon x, u^{+})- F'_1(u^{+}),\,\, \mbox{in} \,\, \mathbb{R}^{N}.
\eqno{(P_\epsilon)^{*}}
$$
 We  notice that, if $u_{\epsilon}$ is a positive solution of problem $(P_\epsilon)^{*}$ with $0< u_{\epsilon}(x)\leq a_{0}$ for all $x\in \mathbb{R}^N\backslash \Lambda_{\epsilon}$, then $G'_2(\epsilon x, u_{\epsilon})=F'_2(u_{\epsilon})$ and therefore, $v_{\epsilon}=u_{\epsilon}(\frac{x}{\epsilon})$ is also a solution of $(P_\epsilon)$, where
$$
\Lambda_{\epsilon}:=\{x\in \mathbb{R}^N: \epsilon x\in \Lambda\}.
$$

In what follows, we will look for nontrivial critical points for the functional
\begin{equation*}
I_{\epsilon}(u)=\frac{1}{2}\int \big (|\nabla u|^2+(V(\epsilon x) +1)|u|^2\big)dx+\int F_{1}(u^{+})dx-\int  G_{2}(\epsilon x, u^{+})dx,
\end{equation*}
in the sub-differential sense, where
$$
u^{+}=\max\{u(x), 0\} \quad \mbox{and} \quad G_{2}(x, t)=\int_{0}^{t}G'_{2}(x, s)ds \quad \forall (x, t) \in \mathbb{R}^N\times \mathbb{R}.
$$

Let $H_\epsilon^{+}$ be the open subset of $H_\epsilon$ given by
$$
H_\epsilon^{+}=\{u\in H_\epsilon: \vert \text{supp}(u^{+})\cap \Lambda_{\epsilon}\vert>0\}.
$$
The functional $I_{\epsilon}$ satisfies the mountain pass geometry \cite{Willem}.

\begin{lemma}\label{vicente} For all $\epsilon>0$, the functional $I_{\epsilon}$ satisfies the following conditions:\\
\noindent (i)  $I_{\epsilon}(0)=0$;\\
\noindent (ii) there exist $\alpha, \rho>0$ such that $I_{\epsilon}(u)\geq \alpha$ for any $u\in H_{\epsilon}$ with $\Vert u\Vert_{\epsilon}=\rho$;\\
\noindent (iii) there exists $e\in H_{\epsilon}$ with $\Vert e\Vert_{\epsilon}>\rho$ such that $I_{\epsilon}(e)<0$.
\end{lemma}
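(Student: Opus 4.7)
Item (i) is immediate: $F_1(0)=0$ and $G_2(\epsilon x,0)=0$, so $I_\epsilon(0)=0$.

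For (ii) the plan is to drop the nonnegative term $\int F_1(u^+)\,dx$ and estimate the penalized nonlinearity by splitting across $\Lambda_\epsilon$ and its complement. Inside $\Lambda_\epsilon$, $G_2(\epsilon x, s)=F_2(s)$, and integrating \eqref{eq5} yields $F_2(s) \leq C|s|^p$ for some fixed $p \in (2,2^*)$. Outside $\Lambda_\epsilon$ I would exploit the construction of the cutoff: since $F_2'(s)/s$ is nondecreasing for $s>0$ and $F_2'(a_0)/a_0 = l$, one has $F_2'(s)\leq ls$ on $[0,a_0]$, while $\tilde F_2'(s)=ls$ on $[a_0,\infty)$, so $\tilde F_2'(s)\leq ls$ everywhere on $[0,\infty)$, giving $G_2(\epsilon x, s) \leq \tfrac{l}{2}s^2$ on $\Lambda_\epsilon^c$. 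Applying the Sobolev embedding (available since $V(\epsilon x)+1\geq V_0+1\geq 2l>0$ makes $\|\cdot\|_\epsilon$ equivalent on $H^1$ scales to the standard norm),
$$\int G_2(\epsilon x, u^+)\,dx \leq C\|u\|_\epsilon^p + \tfrac{l}{2}|u|_2^2.$$
The inequality $V_0+1\geq 2l$ yields $V(\epsilon x)+1-l \geq \tfrac{1}{2}(V(\epsilon x)+1)$ pointwise, hence $\tfrac12\|u\|_\epsilon^2 - \tfrac{l}{2}|u|_2^2 \geq \tfrac14\|u\|_\epsilon^2$. Consequently $I_\epsilon(u) \geq \tfrac14\|u\|_\epsilon^2 - C\|u\|_\epsilon^p$, and since $p>2$, choosing $\rho>0$ sufficiently small produces the required $\alpha>0$ on the sphere $\|u\|_\epsilon=\rho$.

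For (iii) I would pick any $u_0 \in C_c^\infty(\mathbb{R}^N)$ with $u_0\geq 0$, $u_0\not\equiv 0$, and $\mathrm{supp}(u_0)\subset \Lambda_\epsilon$; such a $u_0$ exists because $\Lambda_\epsilon$ is a nonempty open set. On its support $G_2(\epsilon x, tu_0) = F_2(tu_0)$, and by \eqref{efes}, $F_1(s)-F_2(s) = -\tfrac12 s^2 \log s^2$. Thus for $t>0$,
$$I_\epsilon(tu_0) = \frac{t^2}{2}\|u_0\|_\epsilon^2 - \frac{t^2 \log t^2}{2}|u_0|_2^2 - \frac{t^2}{2}\int u_0^2 \log u_0^2\,dx,$$
where the last integral is a fixed finite number because $u_0$ has compact support. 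As $t\to +\infty$ the $-t^2\log t^2$ term dominates (since $|u_0|_2>0$), so $I_\epsilon(tu_0)\to -\infty$; taking $e:=tu_0$ for $t$ sufficiently large then gives $\|e\|_\epsilon>\rho$ and $I_\epsilon(e)<0$.

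The main obstacle I expect is item (ii): one must simultaneously absorb the quadratic contribution of $G_2$ on $\Lambda_\epsilon^c$ into $\tfrac12\|u\|_\epsilon^2$ and keep the $L^p$ contribution on $\Lambda_\epsilon$ subleading. This is precisely where the compatible choices $V_0+1\geq 2l$ and $F_2'(a_0)/a_0 = l$ built into the penalization are essential. Once that estimate is in place, the rest of the proof follows the classical del Pino--Felmer template adapted to the logarithmic setting via the decomposition $F_2(s)-F_1(s)=\tfrac12 s^2\log s^2$.
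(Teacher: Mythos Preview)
Your proof is correct. Parts (i) and (ii) follow essentially the same route as the paper: the paper compresses your estimate on $\Lambda_\epsilon^c$ into the single line $I_\epsilon(u)\geq \tfrac14\|u\|_\epsilon^2-\int_{\Lambda_\epsilon}F_2(u^+)\,dx$, but the underlying reason is exactly your observation that $\tilde F_2'(s)\le ls$ together with $V(\epsilon x)+1\ge 2l$.

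For (iii) you take a genuinely simpler path. The paper works with an arbitrary $u\in H_\epsilon^{+}$ and must therefore handle the region $\Lambda_\epsilon^c\cap[tu^+>a_0]$ where $G_2\neq F_2$; this forces the decomposition $\mathbb{R}^N=(\Lambda_\epsilon\cup[tu^+\le a_0])\cup(\Lambda_\epsilon^c\cap[tu^+>a_0])$ and several auxiliary estimates on $F_1$ over sets of growing measure. By choosing $u_0\in C_c^\infty$ with $\mathrm{supp}(u_0)\subset\Lambda_\epsilon$ you collapse the penalization to $G_2=F_2$ and reduce the computation to the clean identity $F_1-F_2=-\tfrac12 s^2\log s^2$. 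This is perfectly adequate for the mountain pass geometry as stated. The price is scope: the paper's argument actually proves $I_\epsilon(tu)\to-\infty$ for \emph{every} $u\in H_\epsilon^{+}$, and that stronger fact is reused verbatim in Lemma~\ref{lema} to show $g_u(t)<0$ for large $t$ on the whole of $H_\epsilon^{+}$, which underlies the Nehari-type characterization $c_\epsilon=\inf_{\mathcal N_\epsilon}I_\epsilon$. Your shortcut would need to be supplemented at that later stage.
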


\begin{proof} \mbox{} \\
\noindent $(i)$: It is clear.\\
\noindent $(ii)$:
 Note that $I_{\epsilon}(u)\geq\dis\frac{1}{4}\|u\|_\epsilon^2-\int_{\Lambda_{\epsilon}} F_{2}(u^+)dx$. Hence,  from (\ref{eq5}),  fixed $p \in (2,2^*)$, it follows that
$$
I_{\epsilon}(u)\geq \dis\frac{1}{4}\|u\|_\epsilon ^2-C\|u\|_\epsilon^p \geq C_1> 0,
$$
for some $C_1>0$ and $\|u\|_\epsilon>0$ small enough. Here the constant $C_1$ does not depend on $\epsilon$. \\
\noindent $(iii)$:  For each $u\in H_\epsilon^{+}$ and $t>0$. By recalling that
$$
{\mathbb{R}}^N= (\Lambda_\epsilon \cup [tu^+ \leq a_0]) \cup (\Lambda_\epsilon^c \cap [tu^+ > a_0]),
$$
the definition of ${G}_2$ gives
$$
\int F_1(tu^+)\,dx-\int G_2(\epsilon x,tu^+)\,dx\leq -\frac{1}{2}\int_{\Lambda_\epsilon \cup [tu^+ \leq a_0]}|tu^+|^2\log(|tu^+|^2)\,dx+\int_{\Lambda_\epsilon^c \cap [tu^+ > a_0]}F_1(tu^{+})\,dx.
$$
Since $u \in H_\epsilon$, we know that
$$
\int_{[tu^+ \geq a_0]}|u^+|^2\,dx \leq \int_{{\mathbb{R}}^N}|u^+|^2\,dx=D,
$$
and so,
$$
|[tu^+ \geq a_0]| \leq \frac{D}{a^2_0}t^2=D_1t^2.
$$
By the definition of $F_1$,
$$
F_1(t) \leq a_1t^2+b_1, \quad \forall t \geq 0,
$$
then
$$
\int_{\Lambda_\epsilon^c\cap [tu^+ > a_0]}F_1(tu^+)\,dx \leq \int_{[tu^+ > a_0]}F_1(tu^+)\,dx \leq At^{2}.
$$
Hence,
$$
I_{\epsilon}(t u) \leq \frac{t^2}{2}\|u\|_\epsilon^{2}-\frac{1}{2}\int_{\Lambda_\epsilon \cup [tu^+ \leq a_0]}|tu^+|^2\log(|tu^+|^2)\,dx+At^{2},
$$
or equivalently,
$$
I_{\epsilon}(t u) \leq \frac{t^2}{2}\|u\|_\epsilon^{2}-t^2\int_{\Lambda_\epsilon \cup [tu^+ \leq a_0]}(|u^+|^2\log(t)+|u^+|^2\log(u^+))\,dx+At^{2}.
$$
From this,
$$
I_{\epsilon}(t u) \leq t^{2}\left(\frac{1}{2}\|u\|_\epsilon^{2}-\log(t)\int_{\Lambda_\epsilon \cup [tu^+ \leq a_0]}|u^+|^2-\frac{1}{2}\int_{\Lambda_\epsilon \cup [tu^+ \leq a_0]}|u^+|^2\log(|u^+|^2)\,dx+A\right).
$$
Since,
$$
\int_{\Lambda_\epsilon \cup [tu^+ \leq a_0]}|u^+|^2\,dx \geq \int_{\Lambda_\epsilon }|u^+|^2\,dx>0
$$
we derive that
$$
 I_{\epsilon}(t u)\leq t^{2}\left(\frac{1}{2}\|u\|_\epsilon^{2}-\log(t)\int_{\Lambda_\epsilon}|u^+|^2-\frac{1}{2}\int_{\Lambda_\epsilon \cup [tu^+ \leq a_0]}|u^+|^2\log(|u^+|^2)\,dx+A\right), \quad \forall t \geq 1.
$$
On the other hand, using the fact that  $I_\epsilon(u)<+\infty$, it follows that $|u^+|^2\log|u^+|^2\in L^{1}({\mathbb{R}}^N)$. Hence,
$$
\left|\int_{\Lambda_\epsilon \cup [tu^+ \leq a_0]}|u^+|^2\log(|u^+|^2)\,dx\right| \leq  \int_{{\mathbb{R}}^N}||u^+|^2\log(|u^+|^2)|\,dx<+\infty, \quad \forall t \geq 1.
$$
Thereby, setting
$$
C=\sup_{t \geq 1}\left(-\int_{\Lambda_\epsilon \cup [tu^+ \leq a_0]}|u^+|^2\log(|u^+|^2)\,dx \right)< +\infty,
$$
we obtain
$$
I_{\epsilon}(tu)\leq t^{2}\left(\frac{1}{2}\|u\|_\epsilon^{2}-\log(t)\int_{\Omega_1}|u^+|^2+C+A\right), \quad \forall t \geq 1,
$$
from where it follows that
$$
I_{\epsilon}(tu) \to -\infty \quad \mbox{as} \quad t \to +\infty.
$$
\end{proof}
From Lemma \ref{vicente} can define the minimax level
\begin{equation}\label{mountain}
c_{\epsilon}=\inf_{\gamma\in \Gamma_{\epsilon}}\max_{t\in [0, 1]}I_{\epsilon}(\gamma (t)), \quad\text{where}\,\, \Gamma_{\epsilon}=\{\gamma\in C([0, 1], H_{\epsilon}): \gamma(0)=0, I_{\epsilon}(\gamma(1))<0\}.
\end{equation}
Using a version of the mountain pass theorem without $(PS)$ condition (see \cite{AlvesdeMorais}), there is a Palais-Smale sequence
$(u_{n})$ at the level $c_{\epsilon}$, that is, $I_{\epsilon}(u_{n})\rightarrow c_{\epsilon}$ and
\begin{eqnarray*}
&&\int \big (\nabla u_{n} \nabla(v-u_{n})+(V(\epsilon x) +1)u_{n}(v-u_{n})\big)dx-\int  G'_{2}(\epsilon x, u_{n}^{+})(v-u_{n})dx\\
&&+\int F_{1}(v^{+})dx-\int F_{1}(u_{n}^{+})dx\geq -\tau_{n}\Vert v-u_{n}\Vert_{\epsilon}, \,\, \forall v\in H_{\epsilon}.
\end{eqnarray*}
In order to show the boundedness of $(PS)$ sequence of $I_\epsilon$, we will use the following logarithmic inequality, whose the proof can be found in del Pino and Dolbeault \cite[pg 153]{dJ}.

\begin{lemma} \label{LI} ( {\bf A new logarithmic inequality} ) \label{P1} There are constants $A,B>0$ such that
	$$
	\int |u|^2\log(|u|^2)\, dx \leq A+ B\log(\|u\|), \quad \forall  u \in H^{1}(\mathbb{R}^N) \setminus \{0\}.
	$$
\end{lemma}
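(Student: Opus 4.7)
The plan is to deduce the stated inequality from the sharp $L^{2}$ logarithmic Sobolev inequality of del Pino and Dolbeault, which, for every $v\in H^{1}(\mathbb{R}^N)\setminus\{0\}$ with $|v|_{2}=1$, asserts
$$
\int |v|^{2}\log|v|^{2}\,dx \;\le\; \tfrac{N}{2}\log\bigl(K_{N}\,|\nabla v|_{2}^{2}\bigr),
$$
for an explicit constant $K_{N}>0$ (the sharp Euclidean $L^{p}$-logarithmic Sobolev inequality with $p=2$). This is the ``engine'' of the estimate, and the rest of the proof is just homogenisation plus elementary bounds in terms of the $H^{1}$-norm.

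Concretely, I would first homogenise by setting $v=u/|u|_{2}$, so that $|v|_{2}=1$, then apply the sharp inequality to $v$ and multiply the result by $|u|_{2}^{2}$. After collecting the logarithm terms this gives
$$
\int |u|^{2}\log|u|^{2}\,dx \;\le\; |u|_{2}^{2}\log|u|_{2}^{2} \;+\; \tfrac{N}{2}|u|_{2}^{2}\,\log\!\Bigl(\tfrac{K_{N}\,|\nabla u|_{2}^{2}}{|u|_{2}^{2}}\Bigr).
$$
Next, I would use the trivial bounds $|u|_{2}\le \|u\|$ and $|\nabla u|_{2}\le \|u\|$ together with the monotonicity of $\log$, so that $\log(|\nabla u|_{2}^{2}/|u|_{2}^{2})\le 2\log\|u\|-2\log|u|_{2}$. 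The function $s\mapsto s\log s$ is bounded from above on any fixed bounded interval and tends to $0$ as $s\to 0^{+}$, which controls the isolated $|u|_{2}^{2}\log|u|_{2}^{2}$ contribution. After routine algebra, all pieces that do not depend on $\|u\|$ are absorbed into an additive constant $A$, while the only remaining $\|u\|$-dependence is a term proportional to $\log\|u\|$, yielding the desired bound with suitable $A,B>0$.

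The main delicate point is the interplay between $|u|_{2}$ and $\|u\|$: the logarithmic inequality is scale-sensitive, so before the trivial $H^{1}$-majoration one must first extract the normalisation $|u|_{2}=1$ via homogenisation, which is exactly what the substitution $v=u/|u|_{2}$ accomplishes. The degeneracy as $|u|_{2}\to 0$ (where the argument of the outer $\log$ could blow up) is a cosmetic issue: either $|u|_{2}$ is small, in which case $|u|_{2}^{2}\log(1/|u|_{2}^{2})$ is bounded by a constant and absorbed in $A$, or it is not, in which case the homogenisation is well-behaved. All told, the proof is a short computation once the sharp del Pino--Dolbeault inequality is in hand, which is why the authors simply cite it.
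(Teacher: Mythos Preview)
Your derivation up to the unhomogenised logarithmic Sobolev inequality
\[
\int |u|^{2}\log|u|^{2}\,dx \;\le\; |u|_{2}^{2}\log|u|_{2}^{2} \;+\; \tfrac{N}{2}\,|u|_{2}^{2}\,\log\!\Bigl(\tfrac{K_{N}\,|\nabla u|_{2}^{2}}{|u|_{2}^{2}}\Bigr)
\]
is correct, and this is indeed the del Pino--Dolbeault content that the paper is invoking (the paper gives no argument of its own; it simply refers the reader to that source). The gap is in your last paragraph: the ``routine algebra'' that is supposed to reduce the right-hand side to $A+B\log\|u\|$ cannot succeed, because the statement as printed is \emph{false} without a normalisation. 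Take a fixed $\phi\in C_{0}^{\infty}(\mathbb{R}^{N})$ with $|\phi|_{2}=1$ and set $u=t\phi$, $t\to+\infty$: then
\[
\int |u|^{2}\log|u|^{2}\,dx \;=\; 2t^{2}\log t + t^{2}\!\!\int \phi^{2}\log\phi^{2}\,dx \;\sim\; 2t^{2}\log t,
\qquad
A+B\log\|u\| \;\sim\; B\log t,
\]
so no choice of $A,B$ can make the inequality hold. In your bound the obstruction is visible: after majorising $|\nabla u|_{2}\le\|u\|$ you are left with terms of the form $|u|_{2}^{2}\log\|u\|$ and $|u|_{2}^{2}\log|u|_{2}$, and the factor $|u|_{2}^{2}$ is \emph{not} bounded on $H^{1}(\mathbb{R}^N)$, so it cannot be absorbed into a constant~$A$. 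Your remark that ``$s\mapsto s\log s$ is bounded on any fixed bounded interval'' is true but irrelevant, since $s=|u|_{2}^{2}$ does not sit in a fixed interval.

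What del Pino--Dolbeault actually gives, and what your computation proves cleanly, is the inequality \emph{under the constraint $|u|_{2}=1$}: then the first term on the right vanishes and the second becomes $\tfrac{N}{2}\log(K_{N}|\nabla u|_{2}^{2})\le \tfrac{N}{2}\log K_{N}+N\log\|u\|$, which is precisely of the form $A+B\log\|u\|$. This is almost certainly the intended hypothesis (and is how the inequality is used in practice to obtain boundedness of Palais--Smale sequences, in combination with the relation $\|u\|_{\epsilon}^{2}\le 2M+\int |u^{+}|^{2}\log|u^{+}|^{2}$). So your approach is the right one --- identical in spirit to the cited source --- but you should flag that the lemma needs the normalisation $|u|_{2}=1$, rather than claim to have proved it for arbitrary $u$.
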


As an immediate consequence we have the corollary

\begin{corollary}  \label{C1} There are $C,R>0$ such that if $u \in H^{1}(\mathbb{R}^N)$ and $\|u\| \geq R$, then
	$$
	\int \log(|u|^2)|u|^2\,dx \leq C (1+ \|u\|).
	$$
\end{corollary}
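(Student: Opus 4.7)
The corollary is a direct quantitative consequence of Lemma \ref{LI}: it merely rephrases the log-of-norm bound there as a sublinear-in-norm bound. My plan is to start from
\[
\int |u|^2\log(|u|^2)\,dx \leq A + B\log(\|u\|), \quad \forall\, u \in H^1(\mathbb{R}^N)\setminus\{0\},
\]
and to absorb the $\log(\|u\|)$ term into the linear expression $C(1+\|u\|)$ using the elementary inequality $\log t \leq t-1 < t$, valid for every $t>0$.

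Applying $\log \|u\| \leq \|u\|$ (which holds as soon as $\|u\|>0$) in the right-hand side above, the lemma yields
\[
\int |u|^2\log(|u|^2)\,dx \leq A + B\|u\| \leq C\bigl(1+\|u\|\bigr)
\]
with, say, $C := \max\{A,B\}$. Any positive $R$ (for instance $R = 1$) then fulfills the conclusion of the corollary, since the only reason to impose $\|u\|\geq R>0$ is to exclude the degenerate case $u\equiv 0$, for which Lemma \ref{LI} does not apply.

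The only --- and quite minor --- point to check is the elementary bound $\log t \leq t$, together with the observation that the one-sided nature of both Lemma \ref{LI} and of the present corollary means that integrability of $|u|^2\log(|u|^2)$ never has to be verified: we simply bound from above an integral that is a priori an extended real number in $[-\infty,+\infty)$. In particular there is no substantive obstacle, and the corollary follows from Lemma \ref{LI} essentially in one line; its usefulness is downstream, in Lemma \ref{boundedness}, where replacing $B\log\|u\|$ by $C(1+\|u\|)$ makes it possible to absorb the logarithmic term against a quadratic quantity controlled by $\|u\|_\epsilon^2$ and thereby derive boundedness of $(PS)$ sequences for $I_\epsilon$.
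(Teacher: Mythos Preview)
Your argument is correct and is exactly the kind of one-line derivation the paper has in mind when it calls the corollary ``an immediate consequence'' of Lemma \ref{LI} without spelling out a proof. Using $\log t \leq t$ to pass from $A + B\log(\|u\|)$ to $C(1+\|u\|)$ with $C=\max\{A,B\}$ is the natural route, and your observation that any $R>0$ suffices (its only role being to ensure $u\neq 0$) is accurate.
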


By the definition of $G_2$, it is easy to see that
$$
G_2(x,s) \leq F_2(s), \quad s \geq 0.
$$
Consequently
\begin{align}\label{ine}
I_{\epsilon}(u) \geq \tilde{J}_{\epsilon}(u)=\frac{1}{2}\|u\|_{\epsilon}^{2}+\displaystyle \int F_1(u^+)\,dx- \displaystyle \int  F_2(u^+)dx, \quad \forall u \in H_{\epsilon}.
\end{align}

\begin{lemma}\label{boundedness}
 Let $(v_n) \subset H_{\epsilon}$ be a sequence such that $(I_{\epsilon}(v_n))$ is bounded in $\mathbb{R}$. Then, $(v_n)$ is a bounded sequence in $H_{\epsilon}$.
\end{lemma}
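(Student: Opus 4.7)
The plan is to bypass any Palais--Smale-type condition entirely. Only the upper bound on $I_\epsilon(v_n)$ will be used, because the quadratic part $\tfrac12\|u\|_\epsilon^{2}$ of $I_\epsilon$ dominates the logarithmic nonlinearity by Lemma \ref{LI}. Combining the comparison $I_\epsilon(u)\geq\tilde{J}_\epsilon(u)$ from (\ref{ine}) with the identity $F_{2}(s)-F_{1}(s)=\tfrac12 s^{2}\log s^{2}$ coming from (\ref{efes}), the task reduces to controlling $\|u\|_\epsilon^{2}$ against $\int (u^{+})^{2}\log((u^{+})^{2})\,dx$.

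First I would write, for each $n$,
\[
I_\epsilon(v_n)\;\geq\;\tilde{J}_\epsilon(v_n)\;=\;\frac{1}{2}\|v_n\|_\epsilon^{2}-\frac{1}{2}\int (v_n^{+})^{2}\log((v_n^{+})^{2})\,dx.
\]
If $v_n^{+}\equiv 0$, the right-hand side is already purely quadratic and the boundedness of $\|v_n\|_\epsilon$ follows directly from the upper bound on $I_\epsilon(v_n)$. Otherwise I apply Lemma \ref{LI} to $v_n^{+}\in H^{1}(\mathbb{R}^{N})\setminus\{0\}$ to obtain
\[
\int (v_n^{+})^{2}\log((v_n^{+})^{2})\,dx\;\leq\; A+B\log\|v_n^{+}\|\;\leq\; A+B\log\|v_n\|.
\]
Using $V(\epsilon x)+1\geq V_{0}+1>0$ from $(V1)$, the standard $H^{1}$-norm and $\|\cdot\|_\epsilon$ are equivalent on $H_\epsilon$, hence $\log\|v_n\|\leq \log\|v_n\|_\epsilon+C_{0}$ for a constant $C_{0}$ depending only on $V_{0}$. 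Putting these estimates together gives
\[
I_\epsilon(v_n)\;\geq\;\frac{1}{2}\|v_n\|_\epsilon^{2}-\frac{B}{2}\log\|v_n\|_\epsilon-C_{1}
\]
for some constant $C_{1}$. Since the right-hand side is coercive in $\|v_n\|_\epsilon$ (the quadratic dominates the logarithm) while the left-hand side is bounded above by hypothesis, $(v_n)$ is bounded in $H_\epsilon$.

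The delicate point that the authors highlight in the introduction is that the identity $J_\epsilon(u)-\tfrac12 J'_\epsilon(u)u=\tfrac12\int|u|^{2}\,dx$ from Corollary \ref{C1} is unavailable for $I_\epsilon$, because the modified functional no longer satisfies the analogous relation. What makes the plan above work is that a one-sided estimate suffices: the (possibly negative) contributions of the $F_{1}$ and $G_{2}$ pieces can be discarded in favor of $\tilde{J}_\epsilon$, after which the super-logarithmic growth of $t\mapsto \tfrac12 t^{2}-\tfrac{B}{2}\log t$ closes the argument. The only technical care needed is in applying Lemma \ref{LI} to $v_n^{+}$ rather than to $v_n$, which is harmless because $v_n^{+}\in H^{1}(\mathbb{R}^{N})$ with $\|v_n^{+}\|\leq\|v_n\|$.
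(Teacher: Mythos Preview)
Your proof is correct and follows essentially the same approach as the paper's: both start from the comparison $I_\epsilon(v_n)\geq\tilde J_\epsilon(v_n)$ in (\ref{ine}), rewrite $\tilde J_\epsilon$ using $F_2-F_1=\tfrac12 s^2\log s^2$, and then invoke the logarithmic inequality of Lemma~\ref{LI} to show that the quadratic term dominates. The paper organizes the last step as a case split according to whether $\|v_n^+\|$ is above or below the threshold $R$ of Corollary~\ref{C1}, whereas you apply Lemma~\ref{LI} directly and argue coercivity of $t\mapsto\tfrac12 t^2-\tfrac{B}{2}\log t$; this is a cosmetic difference only.

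One small wording issue: you assert that $\|\cdot\|$ and $\|\cdot\|_\epsilon$ are \emph{equivalent} on $H_\epsilon$, but the paper does not assume $V$ is bounded above, so only the inequality $\|u\|\leq C\|u\|_\epsilon$ (from $(V1)$) is available. Fortunately that is precisely the direction you use to get $\log\|v_n\|\leq\log\|v_n\|_\epsilon+C_0$, so the argument stands; just replace ``equivalent'' by the one-sided bound.
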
	
\begin{proof}	 By the assumption, there is $M>0$ such that
$$
M \geq I_{\epsilon}(v_n), \quad \forall n \in \mathbb{N}.
$$
Thus,
$$
M \geq \tilde{J}_{\epsilon}(v_n) = \displaystyle \frac{1}{2}\|u_n\|_{\epsilon}^{2}+\displaystyle \int F_1(v^+_n)\,dx- \displaystyle \int  F_2(v^+_n)dx, \\
$$
that is
$$
M \geq \displaystyle \frac{1}{2}\|v_n\|_{\epsilon}^{2}-\frac{1}{2}\int |v^+_n|^{2}\log(|v^+_n|^{2})\,dx,
$$
from where it follows that
\begin{equation} \label{Z0}
\|v_n\|_{\epsilon}^{2} \leq 2M+\int |v^+_n|^{2}\log(|v^+_n|^{2})\,dx, \quad \forall n \in \mathbb{N}.
\end{equation}
Without lost of generality we will assume that $v^+_n \not=0$, because otherwise, we have that inequality
$$
\|v_n\|_{\epsilon}^2 \leq 2M.
$$
From this, assume that there is $n \in \mathbb{N}$ such that $\|v^+_n\|_{\epsilon} \geq R$. By Corollary \ref{C1},
$$
\|v_n\|_{\epsilon}^{2} \leq 2M + C(1+\|v^+_n\|_{\epsilon}) \leq 2M + C(1+\|v_n\|_{\epsilon}).
$$
If $0<\|v^+_n\|_{\epsilon} \leq R$, Lemma \ref{P1} combine with (\ref{Z0}) to give
$$
\|v_n\|_{\epsilon}^{2}\leq 2M+A +B\log(R).
$$
The above analysis ensures that $(v_n)$ is bounded. $\square$
\end{proof}
\vspace{0.5 cm}

As a byproduct of the last lemma we have the boundedness of $(PS)_{c_{\epsilon}}$ sequences for $I_{\epsilon}$.

\begin{corollary}\label{coro}
 If $(v_n)$ is a $(PS)$ sequence for $I_{\epsilon}$, then $(v_n)$ is bounded in $H_\epsilon$.
	
\end{corollary}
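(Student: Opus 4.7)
The plan is to observe that this corollary is an essentially immediate consequence of Lemma \ref{boundedness}, so the work is just a matter of unpacking the definition of a Palais-Smale sequence and invoking the previous lemma.

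First I would recall the definition: a $(PS)$ sequence for $I_\epsilon$ at a level $d$ (in the sub-differential sense of Definition \ref{definicao} adapted to the decomposition of $I_\epsilon$ used in Section 3) satisfies, in particular, $I_\epsilon(v_n) \to d$ as $n \to \infty$. Convergence of a real sequence implies boundedness, so $(I_\epsilon(v_n))$ is bounded in $\mathbb{R}$.

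Once that observation is made, the hypothesis of Lemma \ref{boundedness} is verified, and I would simply apply the lemma to conclude that $(v_n)$ is bounded in $H_\epsilon$. The key inequality that Lemma \ref{boundedness} relies on, namely $I_\epsilon(u) \geq \tilde{J}_\epsilon(u)$ from \eqref{ine}, together with the logarithmic inequality of Lemma \ref{LI} (or its Corollary \ref{C1}), has already been used there, so no new estimate is needed here.

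The main (and only) conceptual obstacle is that $I_\epsilon$ is not of class $C^1$, so one might worry that the usual ``$I_\epsilon(v_n)$ bounded and $I'_\epsilon(v_n) \to 0$'' definition has to be handled with care. However, only the boundedness of the functional values is used, and that is given directly by the convergence $I_\epsilon(v_n) \to d$ in the definition of a $(PS)$ sequence. Therefore the corollary follows at once.
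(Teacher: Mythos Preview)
Your proposal is correct and matches the paper's approach: the paper states the corollary as an immediate byproduct of Lemma~\ref{boundedness}, since a $(PS)$ sequence has $I_\epsilon(v_n)\to d$, hence $(I_\epsilon(v_n))$ is bounded, and Lemma~\ref{boundedness} applies directly.
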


\begin{lemma}\label{compactness} For any fixed $\epsilon>0$, let $(v_n) \subset H_\epsilon$ be a $(PS)_{d}$ sequence for $I_{\epsilon}$. Then, for each $\zeta>0$, there is a
number $R=R(\zeta)>0$ such that
$$
\underset{n\rightarrow\infty}{\lim\sup}\int_{\mathbb{R}^{N}\backslash B_{R}(0)}(\vert \nabla v_{n}\vert^{2}+(V(\epsilon x)+1)\vert v_{n}\vert^{2})dx\leq \zeta.
$$
\end{lemma}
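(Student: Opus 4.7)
The plan is to test the sub-differential Palais--Smale inequality for $I_\epsilon$ with a cut-off of $v_n$ away from the origin and to exploit the key feature of the penalization: outside $\Lambda_\epsilon$, the modified nonlinearity satisfies $\tilde F'_2(s)\le ls$ for $s\ge 0$ (from the monotonicity of $F'_2(s)/s$ and the definition of $a_0$), while $V(\epsilon x)+1\ge V_0+1\ge 2l$ everywhere. Fix a smooth cut-off $\phi_R\in C^\infty(\mathbb{R}^N,[0,1])$ with $\phi_R\equiv 0$ on $B_R$, $\phi_R\equiv 1$ on $\mathbb{R}^N\setminus B_{2R}$, and $|\nabla\phi_R|\le 2/R$. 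Since $\Lambda$ is bounded, $\Lambda_\epsilon=\Lambda/\epsilon$ is bounded; choose $R$ large enough (depending on $\epsilon$) that $\Lambda_\epsilon\subset B_R$, so that $G'_2(\epsilon x,\cdot)=\tilde F'_2(\cdot)$ throughout $\mathrm{supp}(\phi_R)$.

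Take $v=(1-\phi_R)v_n\in H_\epsilon$. Since $F_1$ is nondecreasing on $[0,\infty)$ (for $\delta$ small) and $0\le(1-\phi_R)v_n^+\le v_n^+$ a.e., we have $v\in D(I_\epsilon)$ and $\Psi(v)-\Psi(v_n)\le 0$. With $v-v_n=-\phi_R v_n$, expanding $\nabla v_n\cdot\nabla(\phi_R v_n)=\phi_R|\nabla v_n|^2+v_n\nabla v_n\cdot\nabla\phi_R$ in the PS inequality and discarding the non-positive $\Psi$-difference yields
\[
\int\phi_R|\nabla v_n|^2\,dx+\int(V(\epsilon x)+1)\phi_R v_n^2\,dx\le \int G'_2(\epsilon x,v_n^+)\phi_R v_n\,dx+\tau_n\|\phi_R v_n\|_\epsilon-\int v_n\nabla v_n\cdot\nabla\phi_R\,dx.
\]
On $\mathrm{supp}(\phi_R)$ one has $G'_2(\epsilon x,v_n^+)v_n=\tilde F'_2(v_n^+)v_n^+\le l(v_n^+)^2\le lv_n^2$, and since $V(\epsilon x)+1\ge 2l$ this implies $\int G'_2(\epsilon x,v_n^+)\phi_R v_n\,dx\le\tfrac12\int(V(\epsilon x)+1)\phi_R v_n^2\,dx$, which is absorbed into the left-hand side. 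Cauchy--Schwarz together with the $H_\epsilon$-boundedness of $(v_n)$ given by Corollary \ref{coro} provides $\left|\int v_n\nabla v_n\cdot\nabla\phi_R\,dx\right|\le C/R$ and $\|\phi_R v_n\|_\epsilon\le C'$ uniformly in $n$ and $R$. Combining these estimates and using $\phi_R\equiv 1$ on $\mathbb{R}^N\setminus B_{2R}$ yields
\[
\int_{\mathbb{R}^N\setminus B_{2R}}\bigl(|\nabla v_n|^2+(V(\epsilon x)+1)v_n^2\bigr)\,dx\le 2\tau_n C'+\frac{2C}{R}.
\]
Taking $\limsup_{n\to\infty}$ kills the $\tau_n$-term, and then choosing $R$ large enough that $2C/R\le\zeta$ gives the desired estimate.

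The main obstacle is the non-smoothness of $I_\epsilon$: one cannot simply multiply an equation by $\phi_R v_n$ and integrate, so the argument must proceed through the sub-differential Palais--Smale formulation. The delicate points are (i) verifying that $v=(1-\phi_R)v_n$ is admissible (i.e., $I_\epsilon(v)<\infty$) and that the contribution of the convex part $\Psi$ has a favorable sign, both of which are secured by the positivity and monotonicity of $F_1$ on $[0,\infty)$; and (ii) aligning the penalization thresholds $a_0$ and $l$ with the potential bound $V_0+1\ge 2l$ so that the nonlinear contribution $G'_2(\epsilon x,v_n^+)\phi_R v_n$ is strictly dominated by the quadratic term $(V(\epsilon x)+1)\phi_R v_n^2$ on $\mathrm{supp}(\phi_R)\subset\mathbb{R}^N\setminus\Lambda_\epsilon$.
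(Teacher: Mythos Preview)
Your proof is correct and follows essentially the same route as the paper: a radial cut-off supported outside $\Lambda_\epsilon$, the penalization bound $\tilde F'_2(s)s\le ls^2$ combined with $V(\epsilon x)+1\ge V_0+1\ge 2l$, and the $H_\epsilon$-boundedness of the $(PS)$ sequence from Corollary~\ref{coro}.

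The one genuine (minor) difference is in how the non-smooth part is handled. The paper writes $I'_\epsilon(v_n)(\phi_R v_n)=o_n(1)$ directly, which produces the term $-\int F'_1(v_n^+)\phi_R v_n\,dx$ explicitly and then drops it by sign; this implicitly requires knowing that $F'_1(v_n^+)\phi_R v_n\in L^1(\mathbb{R}^N)$ so that the pairing makes sense for the non-compactly-supported test direction $\phi_R v_n$. You instead feed the test element $v=(1-\phi_R)v_n$ into the sub-differential Palais--Smale inequality and discard the convex contribution via $\Psi((1-\phi_R)v_n)\le\Psi(v_n)$, which follows from the monotonicity of $F_1$ on $[0,\infty)$. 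Your version is slightly more self-contained on the non-smooth side, since the sub-differential inequality is valid for any $v\in H_\epsilon$ and you only need $\Psi(v)<\infty$ to make it informative. In substance the two arguments coincide.
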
	
\begin{proof}
Let $\phi_{R}\in C^{\infty}(\mathbb{R}^{N}, \mathbb{R})$ be a cut-off function
such that
\begin{equation*}
\phi_{R}=0\quad x\in B_{R/2}(0), \quad \phi_{R}=1\quad x\in B^{c}_{R}(0),\quad 0\leq \phi_{R}\leq 1,\quad\text{and}\, \quad\vert \nabla\phi_{R}\vert\leq C/R,
\end{equation*}
where $C>0$ is a constant independent of $R$. Since the sequence $(\phi_{R}v_{n})$ is bounded in $H_{\epsilon}$,  we derive that
\begin{equation*}
I_{\epsilon}'( u_{n})(\phi_{R} v_{n})=o_{n}(1),
\end{equation*}
that is
\begin{eqnarray*}
\int\Big(\vert \nabla v_{n}\vert^{2}+(V(\epsilon x)+1)\vert v_{n}\vert^{2}\Big)\phi_{R}dx&=&\int_{\Lambda_{\epsilon}} F'_{2}(v_{n}^{+})\phi_{R}v_{n}dx+\int_{\mathbb{R}^N\backslash \Lambda_{\epsilon}} \widetilde{F}'_{2}(v_{n}^{+})\phi_{R}v_{n}dx\\
&&-\int  v_{n} \nabla v_{n}\nabla \phi_{R}dx -\int F'_{1}(v_{n}^{+})\phi_{R}v_{n}dx+o_{n}(1).
\end{eqnarray*}
Choosing $R>0$ such that $\Lambda_{\epsilon} \subset B_{R/2}(0)$, the H\"{o}lder inequality together with the boundedness of the sequence $(v_{n})$ in $H_{\epsilon}$ leads to
\begin{eqnarray*}
\int\Big(\vert \nabla v_{n}\vert^{2}+(V(\epsilon x)+1)\vert v_{n}\vert^{2}\Big)\phi_{R}dx&\leq&l\int \vert v_{n}\vert^{2}\phi_{R}dx+\frac{C}{R}\Vert v_{n}\Vert_{\epsilon}^{2}+o_{n}(1).
\end{eqnarray*}
So, fixing $\zeta>0$ and passing to the limit in the last inequality, it follows that
\begin{align*}
\underset{n\rightarrow\infty}{\lim\sup}\int_{\mathbb{R}^{N}\backslash B_{R}(0)}(\vert \nabla v_{n}\vert^{2}+(V(\epsilon x)+1)\vert v_{n}\vert^{2})dx\leq \frac{C}{R}< \zeta,
\end{align*}
for some $R$ sufficiently large.
\end{proof}

Our next lemma shows that $I_{\epsilon}$ verifies the $(PS)$ condition.

\begin{lemma} \label{compacidadeR} Let $(v_n)$ be a $(PS)_d$ sequence for $I_{\epsilon}$ with $
	v_n \rightharpoonup v $ in $H_{\epsilon}. $ Then, $v_n \to v$ in $H_{\epsilon}$. Moreover,
$$
F_1(v_n^{+}) \to F_1(v^{+}) \quad \mbox{and} \quad  F'_1(v_n^{+})v_n^{+} \to F'_1(v^{+})v^{+} \quad \mbox{in} \quad L^{1}({\mathbb{R}}^N).
$$
As a consequence, $v$ is a critical point of $I_\epsilon$ at level $d$, that is, $0 \in \partial I_\epsilon(v)$ and $I_\epsilon(v)=d$.

\end{lemma}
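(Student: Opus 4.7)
The plan is to combine the tightness from Lemma \ref{compactness} with the Palais-Smale inequality tested at $w=v$, using a Brezis-Lieb style trick to upgrade weak convergence in $H_\epsilon$ to strong convergence, and then bootstrap to the two $L^1$ convergences. To begin, by Corollary \ref{coro} the sequence $(v_n)$ is bounded, so up to a subsequence $v_n \rightharpoonup v$ in $H_\epsilon$, $v_n \to v$ a.e.\ in $\mathbb{R}^N$, and $v_n \to v$ in $L^q_{\mathrm{loc}}(\mathbb{R}^N)$ for every $q \in [1, 2^*)$. The uniform tail estimate provided by Lemma \ref{compactness} (which controls both gradients and $L^2$ mass) combined with local compactness upgrades this to global strong convergence $v_n \to v$ in $L^q(\mathbb{R}^N)$ for all $q \in [2, 2^*)$. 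By construction, $|G'_2(\epsilon x, s)| \leq l s + C s^{p-1}$ for a fixed $p \in (2, 2^*)$, so the global convergence yields both $\int G'_2(\epsilon x, v_n^+)(v-v_n)\,dx \to 0$ and $\int G'_2(\epsilon x, v_n^+)v_n^+\,dx \to \int G'_2(\epsilon x, v^+)v^+\,dx$.

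Plugging $w=v$ into the Palais-Smale inequality and using $\langle v_n,v\rangle_\epsilon \to \|v\|_\epsilon^2$ together with the penalized-integral convergence just established, I obtain
$$
\limsup_n \bigl(\|v_n\|_\epsilon^2 + \Psi(v_n)\bigr) \leq \|v\|_\epsilon^2 + \Psi(v), \qquad \Psi(u)=\int F_1(u^+)\,dx.
$$
Both $\|\cdot\|_\epsilon^2$ and the convex, non-negative $\Psi$ are weakly lower semicontinuous, so $\liminf \|v_n\|_\epsilon^2 \geq \|v\|_\epsilon^2$ and $\liminf \Psi(v_n) \geq \Psi(v)$; since a sum of two liminf-lower-bounded quantities cannot overshoot a limsup-upper-bounded sum, both inequalities must be equalities in the limit. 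Hence $\|v_n\|_\epsilon \to \|v\|_\epsilon$ and $\Psi(v_n) \to \Psi(v)$ separately, and the norm convergence in the Hilbert space $H_\epsilon$ then forces $v_n \to v$ strongly.

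For the $L^1$ convergences, the integral convergence $\int F_1(v_n^+)\,dx \to \int F_1(v^+)\,dx$ together with a.e.\ convergence and $F_1\geq 0$ yields $F_1(v_n^+)\to F_1(v^+)$ in $L^1$ by Scheffe's lemma. For $F'_1(v_n^+)v_n^+$, I invoke the analog of Corollary \ref{C2} for $I_\epsilon$ (the same derivation goes through), namely
$$
\int F'_1(v_n^+)v_n^+\,dx = \int G'_2(\epsilon x, v_n^+)v_n^+\,dx - \|v_n\|_\epsilon^2 + o_n(1),
$$
which combined with $\|v_n\|_\epsilon \to \|v\|_\epsilon$, the convergence of the $G'_2$ integral, and the analogous identity at $v$ obtained by testing the limiting sub-differential inequality with $w = v \pm \phi$, $\phi \in H^1_c$, gives $\int F'_1(v_n^+)v_n^+ \to \int F'_1(v^+)v^+$. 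With $F'_1(s)s \geq 0$ from \eqref{eq2} and a.e.\ convergence, Scheffe again delivers $L^1$ convergence. Finally, passing to the limit in the PS inequality for arbitrary $w\in H_\epsilon$ (strong convergence handles $\Phi'_\epsilon$ and $\Psi(v_n)\to\Psi(v)$ handles the convex term) yields $0\in\partial I_\epsilon(v)$; strong convergence together with dominated convergence for the $G_2$ term (using its polynomial-plus-quadratic majorant) forces $I_\epsilon(v_n)\to I_\epsilon(v)=d$. The main obstacle is the strong convergence step: it requires the global $L^q$ strong convergence bootstrapped from local compactness via Lemma \ref{compactness}, and then the Brezis-Lieb style argument exploiting the \emph{simultaneous} weak lower semicontinuity of two different quantities.
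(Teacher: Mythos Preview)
Your argument is correct and closely parallels the paper's, but with a genuine twist in how strong convergence is obtained. The paper tests the derivative: from $I'_\epsilon(v_n)v_n=o_n(1)$ and, after first passing to the limit in $I'_\epsilon(v_n)w=o_n(1)$ for $w\in C_0^\infty$ to obtain $I'_\epsilon(v)v=0$, it gets
\[
\|v_n\|_\epsilon^2+\int F'_1(v_n^+)v_n^+\,dx \;\longrightarrow\; \|v\|_\epsilon^2+\int F'_1(v^+)v^+\,dx,
\]
and then splits by Fatou for each summand. You instead test the Palais--Smale \emph{inequality} at $w=v$ and obtain
\[
\limsup_n\Bigl(\|v_n\|_\epsilon^2+\int F_1(v_n^+)\,dx\Bigr)\le \|v\|_\epsilon^2+\int F_1(v^+)\,dx,
\]
splitting via weak lower semicontinuity of the norm and of the convex $\Psi$. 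Both routes are the same ``sum of two l.s.c.\ quantities converges, hence each does'' trick, applied to different pairs. Your variant has the advantage that it does \emph{not} require knowing $I'_\epsilon(v)v=0$ before strong convergence is in hand, which in the paper's route is a slightly delicate step (extending from $w\in C_0^\infty$ to $w=v$). On the other hand, the paper obtains the $L^1$ convergence of $F'_1(v_n^+)v_n^+$ directly from its sum identity, whereas you must return to the derivative identity at $v$ afterwards; note that this forces you to establish $0\in\partial I_\epsilon(v)$ \emph{before} your $F'_1$ step, so the order of your last two paragraphs should be swapped for the dependencies to be clean.
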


\begin{proof} Let $(v_n) \subset H_{\epsilon}$ be a $(PS)_{d}$ sequence for $I_{\epsilon}$.
By Corollary 1.2, the sequence $(v_n)$ is bounded in $H_{\epsilon}$, then without lost of generality we can assume that
$$
v_n \rightharpoonup v \quad \mbox{in} \quad H_{\epsilon}.
$$
By the last lemma, for any given $\zeta>0$, there is $R>0$ such that
$$
\limsup_{n \to +\infty}\int_{|x|>R}(|\nabla v_n|^{2}+(V(\epsilon x)+1)|v_n|^{2})\,dx<\zeta.
$$
Since $G'_2$ has a subcritical growth, the above estimate ensures that
$$
\int G'_2(\epsilon x, v_n^{+})w\,dx \to \int G'_2(\epsilon x, v^{+})w\,dx, \quad \forall w \in C_{0}^{\infty}(\mathbb{R}^N),
$$
$$
\int G'_2(\epsilon x, v_n^{+})v_n^{+}\,dx \to \int G'_2(\epsilon x, v^{+})v^{+}\,dx,
$$
and
$$
\int G_2(\epsilon x,v_n^{+})\,dx \to \int G_2(\epsilon x, v^{+})\,dx.
$$
Now, recalling that $I_{\epsilon}'(v_n)w=o_n(1)\|u_n\|_{\epsilon}$ for all $w \in C_{0}^{\infty}(\mathbb{R}^N)$, we deduce that $I_{\epsilon}'(v)w=0$ for all $w \in C_{0}^{\infty}(\mathbb{R}^N)$, and so, $I_{\epsilon}'(v)v=0$, that is,
$$
\int \big (|\nabla v|^2+(V(\epsilon x) +1)|v|^2\big)dx+\int F'_1(v^{+})v^{+}\,dx=\int G'_2(\epsilon x, v^{+})v^{+}\,dx.
$$
Moreover, the limit $I_{\epsilon}'(v_n)v_n=o_n(1)\|u_n\|_{\epsilon}$, that is,
$$
\displaystyle \int  \big (|\nabla v_n|^2+(V(\epsilon x) +1)|v_n|^2\big)dx+\int F'_1(v_n^{+})v_n^{+}\,dx= \displaystyle \int  G'_2(\epsilon x, v_n^{+})v_n^{+} dx +o_n(1).
$$
Gathering the above information, we deduce that
$$
\displaystyle \int \big (|\nabla v_n|^2+(V(\epsilon x) +1)|v_n|^2\big)dx+\int F'_1(v_n^{+})v_n^{+}\,dx= \displaystyle \int \big (|\nabla v|^2+(V(\epsilon x) +1)|v|^2\big)dx+\int F'_1(v^{+})v^{+}\,dx+o_n(1),
$$
from where it follows that, for some subsequence,
$$
v_n \to v \quad \mbox{in} \quad H_{\epsilon}
$$
and
$$
F'_1(v_n^{+})v_n^{+} \to F'_1(v^{+})v^{+} \quad \mbox{in} \quad L^{1}({\mathbb{R}}^N).
$$
Since $F_1$ is convex, even and $F(0)=0$, we know that $F'_1(t)t \geq F_1(t)\geq 0$ for all $t \in {\mathbb{R}}$. Thus, the last limit together with Lebesgue's theorem yields
$$
F_1(v_n^{+}) \to F_1(v^{+}) \quad \mbox{in} \quad L^{1}({\mathbb{R}}^N).
$$
The above limits permit to conclude that $0 \in \partial I_\epsilon(v)$ and $I_\epsilon(v)=d$.
\end{proof}

\begin{theorem}\label{teorema1}
The functional $I_{\epsilon}$ has a positive critical point $u_{\epsilon}\in H_{\epsilon}$ such that $I_{\epsilon}(u_{\epsilon})=c_{\epsilon}$,
where $c_{\epsilon}$ denotes the mountain pass level associated with $I_{\epsilon}$.
\end{theorem}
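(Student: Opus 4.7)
The plan is to realize $u_\epsilon$ as the limit of a Palais-Smale sequence for $I_\epsilon$ at the mountain-pass level. Since the geometric hypotheses are in hand (Lemma \ref{vicente}) and $c_\epsilon$ is given by the minimax formula \eqref{mountain}, I would apply Szulkin's nonsmooth mountain-pass theorem to the decomposition $I_\epsilon = \widetilde{\Phi}_\epsilon + \Psi$, where $\Psi(u) = \int F_1(u^+)\,dx$ is convex and lower semicontinuous and $\widetilde{\Phi}_\epsilon(u) = \tfrac12\|u\|_\epsilon^2 - \int G_2(\epsilon x, u^+)\,dx$ is of class $C^1$ on $H_\epsilon$ by the subcritical growth of $G_2'$. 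This produces a sequence $(u_n) \subset H_\epsilon$ with $I_\epsilon(u_n) \to c_\epsilon$ satisfying the sub-differential $(PS)$ inequality written out immediately below the definition of $c_\epsilon$.

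From this point the argument threads together the lemmas already proved. Corollary \ref{coro} (based on the new logarithmic inequality of Lemma \ref{LI} and the key Lemma \ref{boundedness}) provides boundedness of $(u_n)$ in $H_\epsilon$, and after passing to a subsequence $u_n \rightharpoonup u_\epsilon$. Lemma \ref{compactness} rules out loss of mass at infinity, and Lemma \ref{compacidadeR} then upgrades weak convergence to strong convergence in $H_\epsilon$ and simultaneously identifies $u_\epsilon$ as a sub-differential critical point of $I_\epsilon$ with $I_\epsilon(u_\epsilon) = c_\epsilon$. Since Lemma \ref{vicente}(ii) guarantees $c_\epsilon \geq \alpha > 0$, the limit is nontrivial.

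To obtain strict positivity, I would test the sub-differential inequality with $v = u_\epsilon^+$. Because $F_1$ and $G_2(\epsilon x, \cdot)$ are evaluated only at the positive part, $\Psi(u_\epsilon^+) - \Psi(u_\epsilon) = 0$, and the $G_2'$ contribution vanishes because on $\{u_\epsilon < 0\}$ the positive part is zero while $G_2'(\epsilon x, 0) = F_2'(0) = 0$. Using the standard identities $\nabla u \cdot \nabla u^- = -|\nabla u^-|^2$ and $u\cdot u^- = -(u^-)^2$, the sub-differential inequality reduces to $-\|u_\epsilon^-\|_\epsilon^2 \geq 0$, forcing $u_\epsilon \geq 0$. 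By Lemma \ref{lema}(iii), $u_\epsilon$ is then a weak solution of $(P_\epsilon)^*$; standard elliptic bootstrap gives local regularity, and the strong maximum principle applied to the resulting linear inequality (with the zero-order term absorbed once $u_\epsilon \in L^\infty_{\mathrm{loc}}$) upgrades $u_\epsilon \geq 0$ to $u_\epsilon > 0$ on $\mathbb{R}^N$.

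The main obstacle I expect is not in any single step displayed above — each invokes a previously proved lemma — but rather in the compatibility check that the $(PS)_{c_\epsilon}$ sequence produced by Szulkin's theorem genuinely meets the hypotheses used in Lemma \ref{compacidadeR}, and that the sub-differential formalism accommodates the truncation $u \mapsto u^+$ appearing inside $\Psi$ and inside the derivative of $\widetilde{\Phi}_\epsilon$. The true heart of the theorem is the nontrivial boundedness supplied by Lemma \ref{boundedness}, since the identity $J_\epsilon(u) - \tfrac12 J'_\epsilon(u)u = \tfrac12 \int |u|^2\,dx$ of Corollary 2.2 is unavailable for the modified functional $I_\epsilon$; this gap is precisely what the new logarithmic inequality of Lemma \ref{LI} was designed to bypass.
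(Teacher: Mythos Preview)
Your proposal is correct and follows essentially the same route as the paper: mountain-pass geometry (Lemma~\ref{vicente}) plus Szulkin's framework yields a $(PS)_{c_\epsilon}$ sequence, Corollary~\ref{coro} gives boundedness, Lemma~\ref{compacidadeR} gives strong convergence and the critical point at level $c_\epsilon$, and then $u_\epsilon^- = 0$ plus the strong maximum principle give positivity. The only cosmetic difference is that the paper writes the nonnegativity step as $I'_\epsilon(u_\epsilon)u_\epsilon^- = 0 \Rightarrow u_\epsilon^- = 0$ (testing directly with $u_\epsilon^-$, which is legitimate since $F_1'(u_\epsilon^+)u_\epsilon^- \equiv 0 \in L^1$), whereas you reach the same conclusion by inserting $v = u_\epsilon^+$ into the sub-differential inequality; your closing remarks on the role of Lemma~\ref{boundedness} are also accurate.
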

\begin{proof}
The existence of the critical point $u_{\epsilon}$ is an immediate result of Lemma \ref{vicente}, Corollary \ref{coro} and Lemma \ref{compacidadeR}.
The function $u_{\epsilon}$ is nonnegative, because
$$
I'_{\epsilon}(u_{\epsilon})(u_{\epsilon}^{-})=0\Rightarrow u_{\epsilon}^{-}=0,
$$
where $u_{\epsilon}^{-}=\min \{u_{\epsilon}, 0 \}$.  By a slight variant of the argument in \cite[Section 3.1]{ASZ} it follows from the maximum principle(see \cite[Theorem 1]{V}) that
$u_{\epsilon}(x)>0$ for a.e. $x\in {\mathbb{R}}^N$.
\end{proof}
In the sequel, we denote by $\mathcal{N}_{\epsilon}$ the set
$$
\mathcal{N}_{\epsilon}=\left\{u \in D(I_{\epsilon})\backslash \{0\}: I'_{\epsilon}(u)u=0  \right\}.
$$
The following lemma is important for the proof of Lemma \ref{lemma1}.

\begin{lemma} \label{lema} Assume that hypotheses $(V1)-(V2)$ are satisfied.  For each $u\in H_\epsilon^{+}$, let $g_{u}: \mathbb{R}^{+}\rightarrow \mathbb{R}$ be given by $g_{u}(t)=I_{\epsilon}(tu)$. Then
there exists a unique $t_{u}>0$ such that $g'_{u}(t)>0$ in $(0, t_{u})$ and $g'_{u}(t)<0$ in $(t_{u}, \infty)$.
\end{lemma}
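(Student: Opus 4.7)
The plan is a Nehari-type fibration argument: first use the mountain-pass geometry of Lemma \ref{vicente} to produce a maximum of $g_u$ at some $t_u > 0$, then show that $t \mapsto g'_u(t)/t$ is strictly decreasing on $(0,\infty)$, which forces $t_u$ to be the unique zero of $g'_u$ with the prescribed sign change.

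For existence of $t_u$, I would note that $g_u(0) = 0$; taking $s_\rho = \rho/\|u\|_\epsilon$, item $(\mathrm{ii})$ of Lemma \ref{vicente} gives $g_u(s_\rho) \geq \alpha > 0$, while item $(\mathrm{iii})$ (whose argument works verbatim for any $u \in H_\epsilon^+$) yields $g_u(t) \to -\infty$ as $t \to +\infty$. Hence $g_u$ attains its positive maximum at some $t_u > 0$ with $g'_u(t_u) = 0$.

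For uniqueness and the sign pattern, I would compute, for $t > 0$,
\[
\frac{g'_u(t)}{t} = \|u\|_\epsilon^2 + \int_{\{u^+ > 0\}} \left[ \frac{F'_1(tu^+)}{tu^+} - \frac{G'_2(\epsilon x, tu^+)}{tu^+} \right] (u^+)^2\, dx,
\]
and then establish two monotonicity facts for the integrand. First, $s \mapsto F'_1(s)/s$ is strictly decreasing on $(0,\infty)$: the piecewise definition of $F_1$ gives $F'_1(s)/s = -2\log s - 1$ for $0 < s < \delta$ and $F'_1(s)/s = -(\log\delta^2 + 3) + 2\delta/s$ for $s \geq \delta$, matching continuously at $s=\delta$, with derivatives $-2/s$ and $-2\delta/s^2$ respectively, both strictly negative. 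Second, $s \mapsto G'_2(\epsilon x, s)/s$ is nondecreasing for every $x$: this is the monotonicity of $F'_2(s)/s$ on $(0,\infty)$ recalled at the start of Section 3, preserved by the definition $\tilde F'_2(s) = ls$ for $s \geq a_0$ (which is coherent since $l = F'_2(a_0)/a_0$) and carried over to $G'_2 = \chi_\Lambda F'_2 + (1-\chi_\Lambda)\tilde F'_2$.

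Combining these two facts, the integrand is strictly decreasing in $t$ on the set $\{u^+ > 0\}$, which has positive measure because $u \in H_\epsilon^+$. Hence $t \mapsto g'_u(t)/t$ is strictly decreasing on $(0, \infty)$; vanishing at $t_u$, it is therefore positive on $(0, t_u)$ and negative on $(t_u, \infty)$, and the same sign pattern transfers to $g'_u(t) = t \cdot (g'_u(t)/t)$. The main technical obstacle I foresee is justifying finiteness of the integral above and differentiation under the integral sign; this is granted by $u \in D(I_\epsilon)$, which yields $(u^+)^2|\log(u^+)^2| \in L^1(\mathbb{R}^N)$ as a consequence of $\int F_1(u^+)\, dx < \infty$, so that $g_u$ is $C^1$ along the ray $\{tu : t > 0\}$ and all the computations above are legitimate; beyond this, the argument is a routine check.
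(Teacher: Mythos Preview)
Your proof is correct. The existence step coincides with the paper's; for uniqueness you take a somewhat different (and cleaner) route. The paper assumes two critical points $t_1<t_2$, rewrites $g'_u(t_i)=0$ as
\[
\|u\|_\epsilon^2=\int_{\Lambda_\epsilon}\frac{F'_2(t_iu^+)u^+}{t_i}\,dx+\int_{\mathbb{R}^N\setminus\Lambda_\epsilon}\frac{\tilde F'_2(t_iu^+)u^+}{t_i}\,dx-\int\frac{F'_1(t_iu^+)u^+}{t_i}\,dx,
\]
subtracts, and then carries out an explicit piecewise computation (splitting according to whether $u^+$ lies below $\delta/t_2$, between $\delta/t_2$ and $\delta/t_1$, or above $\delta/t_1$) to reach a sign contradiction. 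You instead prove the single monotonicity fact that $s\mapsto F'_1(s)/s$ is strictly decreasing on $(0,\infty)$---a direct two-line computation from the piecewise formula for $F_1$---and combine it with the stated nondecrease of $G'_2(\epsilon x,s)/s$ to conclude that $t\mapsto g'_u(t)/t$ is strictly decreasing. This bypasses the case analysis entirely and delivers both uniqueness and the sign pattern of $g'_u$ in one stroke, whereas the paper obtains only uniqueness of the critical point from the contradiction and then infers the sign pattern from the fact that $t_u$ is a global maximum. Your remark about the implicit hypothesis $u\in D(I_\epsilon)$ (equivalently $\int F_1(u^+)\,dx<\infty$) is apt; the paper relies on the same assumption in the proof of Lemma~\ref{vicente}\,(iii) without stating it explicitly.
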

\begin{proof}
As in the proof of Lemma \ref{vicente}, we have $g_{u}(0)=0$, $g_{u}(t)>0$ for $t>0$ small and $g_{u}(t)<0$ for $t>0$ large. Therefore,
$\max_{t\geq 0}g_{u}(t)$ is achieved at a global maximum point $t=t_{u}>0$ verifying $g'_{u}(t_{u})=0$ and $t_{u}u\in \mathcal{N}_{\epsilon}$.
Now we claim that $t_{u}>0$ is unique. Indeed, suppose that there exist $t_{2}>t_{1}>0$ such that $g'_{u}(t_{1})=g'_{u}(t_{2})=0$. Then, for $i=1, 2$,
\begin{equation*}
t_{i}\int \big (|\nabla u|^2+(V(\epsilon x) +1)|u|^2\big)dx-\int_{\Lambda_{\epsilon}} F'_{2}(t_{i}u^{+})u^{+}dx-\int_{\mathbb{R}^N\backslash \Lambda_{\epsilon}} \widetilde{F}'_{2}(t_{i}u^{+})u^{+}dx+\int  F'_{1}(t_{i}u^{+})u^{+}dx=0.
\end{equation*}
Hence,
\begin{equation*}
\int \big (|\nabla u|^2+(V(\epsilon x) +1)|u|^2\big)dx=\int_{\Lambda_{\epsilon}} \frac{F'_{2}(t_{i}u^{+})u^{+}}{t_{i}}dx+\int_{\mathbb{R}^N\backslash \Lambda_{\epsilon}} \frac{\widetilde{F}'_{2}(t_{i}u^{+})u^{+}}{t_{i}}dx-\int \frac{F'_{1}(t_{i}u^{+})u^{+}}{t_{i}}dx,
\end{equation*}
which implies that
\begin{eqnarray*}
&&\int_{\Lambda_{\epsilon}} \Big(\frac{F'_{2}(t_{2}u^{+})u^{+}}{t_{2}}-\frac{F'_{2}(t_{1}u^{+})u^{+}}{t_{1}}\Big)dx+\int_{\mathbb{R}^N\backslash \Lambda_{\epsilon}}\Big( \frac{\widetilde{F}'_{2}(t_{2}u^{+})u^{+}}{t_{2}}-\frac{\widetilde{F}'_{2}(t_{1}u^{+})u^{+}}{t_{1}}\Big)dx\\
&= &\int \Big(\frac{F'_{1}(t_{2}u^{+})u^{+}}{t_{2}}-\frac{F'_{1}(t_{1}u^{+})u^{+}}{t_{1}}\Big)dx.
\end{eqnarray*}
Since $u\in H_\epsilon^{+}$, the left side of above equality is positive. For the right side of above equality, we have
\begin{eqnarray*}
 \int \Big(\frac{F'_{1}(t_{2}u^{+})u^{+}}{t_{2}}-\frac{F'_{1}(t_{1}u^{+})u^{+}}{t_{1}}\Big)dx&=&\int_{u^{+}<\frac{\delta}{t_{2}}} \Big(\frac{F'_{1}(t_{2}u^{+})u^{+}}{t_{2}}-\frac{F'_{1}(t_{1}u^{+})u^{+}}{t_{1}}\Big)dx\nonumber\\
 &&+\int_{\frac{\delta}{t_{2}}<u^{+}<\frac{\delta}{t_{1}}} \Big(\frac{F'_{1}(t_{2}u^{+})u^{+}}{t_{2}}-\frac{F'_{1}(t_{1}u^{+})u^{+}}{t_{1}}\Big)dx\nonumber\\
  &&+\int_{u^{+}>\frac{\delta}{t_{1}}} \Big(\frac{F'_{1}(t_{2}u^{+})u^{+}}{t_{2}}-\frac{F'_{1}(t_{1}u^{+})u^{+}}{t_{1}}\Big)dx\nonumber\\
 &=&\int_{u^{+}<\frac{\delta}{t_{2}}} (u^{+})^{2}\log(\frac{t_{1}}{t_{2}})^{2}dx+\int_{u^{+}>\frac{\delta}{t_{1}}} \Big(\frac{1}{t_{2}}-\frac{1}{t_{1}}\Big)2\delta u^{+} dx\nonumber\\
 &&+\int_{\frac{\delta}{t_{2}}<u^{+}<\frac{\delta}{t_{1}}} \Big((u^{+})^{2}\log \frac{t_{1}^{2}(u^{+})^{2}}{\delta^{2}}+2u^{+}(\frac{\delta}{t_{2}}-u^{+})
 \Big)dx.
\end{eqnarray*}
A direct computation shows that the right side of the last last equality is negative, which is a contradiction and $t_{u}>0$ is unique.
\end{proof}
\noindent {\bf Remark 3.1} \, By Lemma \ref{lema}, for each $u\in H_\epsilon^{+}$, there is a unique $m_{\epsilon}(u)\in \mathcal{N}_{\epsilon}$. On the other hand,
if $u\in \mathcal{N}_{\epsilon}$, then $u\in H_\epsilon^{+}$. Otherwise, we have $\vert \text{supp}(u^{+})\cap \Lambda_{\epsilon}\vert=0$ and
\begin{eqnarray*}
\int \big (|\nabla u|^2+(V(0) +1)|u|^2\big)dx\leq\Vert u\Vert_{\epsilon}^{2}&=&\int_{\mathbb{R}^N\backslash \Lambda_{\epsilon}} \widetilde{F}'_{2}(u^{+})u^{+} dx-\int  F'_{1}(u^{+})u^{+} dx\\
&\leq& l\int |u|^2dx,
\end{eqnarray*}
which is impossible since $V(0) +1\geq 2l>0$ and $u\neq 0$.

Related to $\epsilon=0$, for simplicity, we shall assume that $0\in \Lambda$, $V(0)=V_{0}>-1$ and consider the problem
\begin{equation}\label{constant}
\left\{
\begin{array}{lc}
-\Delta u+ V_{0} u=u \log u^2, & \quad \mbox{in} \quad \mathbb{R}^{N}, \\
u \in H^1(\mathbb{R}^{N}). & \;  \\
\end{array}
\right.
\end{equation}
The corresponding energy functional associated to (\ref{constant}) will be denoted by \linebreak $J_0:H^{1}(\mathbb{R}^N) \rightarrow (-\infty, +\infty]$ and defined as
$$
J_{0}(u)=\dis\frac{1}{2}\int \big (|\nabla u|^2+(V_{0} +1)|u|^2\big)dx-\dis\frac{1}{2}\int u^2\log u^2dx.
$$

In \cite{sz} is proved that problem (\ref{constant}) has a positive ground state solution given by
\begin{equation}\label{sisi}
c_{0}:=\inf_{u \in \mathcal{N}_{0}}J_{0}(u)=\inf_{u \in D(I_{0})\backslash \{0\}}\Big\{\max_{t\geq 0}J_{0}(tu)\Big\},
\end{equation}
where
$$
\mathcal{N}_{0}=\left\{u \in D(J_{0})\backslash \{0\}; J_{0}(u)=\displaystyle \frac{1}{2} \int |u|^{2}\,dx  \right\}
$$
and
$$
D(J_{0})=\left\{u \in H^{1}(\mathbb{R}^N)\,:\, J_{0}(u)<+\infty \right\}.
$$

The next lemma shows that the mountain pass level $c_{\epsilon}$ in (\ref{mountain}) is the ground state energy for the functional $I_{\epsilon}$,
it also establishes an important relation between $c_{\epsilon}$ and $c_{0}$.

\begin{lemma} \label{lemma1}
	\noindent (a)  $c_{\epsilon}>0$, for $\epsilon>0$;

	\noindent (b) $c_{\epsilon}=\displaystyle \inf_{u \in \mathcal{N}_{\epsilon}}I_{\epsilon}(u)$, for $\epsilon\geq 0$;

     \noindent (c) $\displaystyle  \lim_{\epsilon\rightarrow 0}c_{\epsilon}= c_{0}$.
\end{lemma}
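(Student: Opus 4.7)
Parts (a) and (b) are straightforward, so I would dispose of them first. For (a), Lemma \ref{vicente}(ii) provides $I_\epsilon\geq\alpha>0$ on the sphere $\{\|u\|_\epsilon=\rho\}$, which every $\gamma\in\Gamma_\epsilon$ must cross since $\|\gamma(0)\|_\epsilon=0$ and $\|\gamma(1)\|_\epsilon>\rho$ (as $I_\epsilon(\gamma(1))<0$); hence $c_\epsilon\geq\alpha>0$. For (b), given any $u\in\mathcal{N}_\epsilon$, Remark 3.1 forces $u\in H_\epsilon^+$, so Lemma \ref{lema} combined with the Nehari relation $I'_\epsilon(u)u=0$ makes the fibering map $g_u(t)=I_\epsilon(tu)$ attain its unique positive maximum at $t=1$. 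Picking $T>1$ with $I_\epsilon(Tu)<0$, the path $\gamma(s)=sTu$ lies in $\Gamma_\epsilon$, giving $c_\epsilon\leq I_\epsilon(u)$, hence $c_\epsilon\leq\inf_{\mathcal{N}_\epsilon}I_\epsilon$. Conversely, the critical point $u_\epsilon$ produced by Theorem \ref{teorema1} satisfies $I'_\epsilon(u_\epsilon)u_\epsilon=0$ (the analogue of Corollary \ref{C3} for $I_\epsilon$), so $u_\epsilon\in\mathcal{N}_\epsilon$ and $\inf_{\mathcal{N}_\epsilon}I_\epsilon\leq c_\epsilon$. The case $\epsilon=0$ is precisely \eqref{sisi}.

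Part (c) is the heart of the lemma and I would split it into two one-sided bounds. For $\limsup_{\epsilon\to 0}c_\epsilon\leq c_0$, take the positive ground state $w$ of $J_0$, which by \eqref{sisi} satisfies $c_0=J_0(w)=\max_{t\geq 0}J_0(tw)$. Since $0\in\Lambda$ and $w>0$, one has $tw\in H_\epsilon^+$ for all $t>0$ and all small $\epsilon$, while $\Lambda_\epsilon=\Lambda/\epsilon$ exhausts $\mathbb{R}^N$. The decay of $w$ (so that $w, w^2\log w^2\in L^1$), the continuity of $V$, and dominated convergence applied to
$$
I_\epsilon(tw)-J_0(tw)=\tfrac12\int\bigl(V(\epsilon x)-V_0\bigr)(tw)^2\,dx+\int\bigl(F_2(tw)-G_2(\epsilon x,tw)\bigr)\,dx
$$
(the second integrand being supported in $\Lambda_\epsilon^c$) then show $I_\epsilon(tw)\to J_0(tw)$ uniformly on any bounded $t$-interval. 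Fixing $T$ with $J_0(Tw)<-1$, the path $s\mapsto sTw$ lies in $\Gamma_\epsilon$ for all small $\epsilon$, and $c_\epsilon\leq\max_{t\in[0,T]}I_\epsilon(tw)\to\max_t J_0(tw)=c_0$.

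For $\liminf_{\epsilon\to 0}c_\epsilon\geq c_0$, I would feed the mountain-pass critical points $u_\epsilon\in\mathcal{N}_\epsilon$ from Theorem \ref{teorema1} through a concentration-compactness argument. The upper bound gives $I_\epsilon(u_\epsilon)=c_\epsilon$ uniformly bounded, so Lemma \ref{boundedness} yields a uniform $H^1$-bound on $(u_\epsilon)$; the uniform positivity $c_\epsilon\geq\alpha>0$ rules out vanishing in the sense of Lions, producing $y_\epsilon\in\mathbb{R}^N$ and $\eta>0$ with $\int_{B_1(y_\epsilon)}|u_\epsilon|^2\geq\eta$, so the translates $\widetilde u_\epsilon(\cdot)=u_\epsilon(\cdot+y_\epsilon)$ converge weakly, along a subsequence, to a nonzero $u\in H^1(\mathbb{R}^N)$. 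The decisive step is showing $\epsilon y_\epsilon\to y_0\in\overline{\Lambda}$ with $V(y_0)=V_0$: this is where the penalization and $(V2)$ enter in the del Pino--Felmer style, excluding both $|\epsilon y_\epsilon|\to\infty$ and $y_0\notin\overline{\Lambda}$ (in either case the ``limit problem'' obtained from passing to the weak limit in the equation for $\widetilde u_\epsilon$ has ground state energy strictly above $c_0$, contradicting the upper bound). Once the concentration is localized, $u$ solves \eqref{constant}, hence $J_0(u)\geq c_0$, and a Brezis--Lieb-type passage to the limit in $c_\epsilon=I_\epsilon(u_\epsilon)$ gives $\liminf c_\epsilon\geq J_0(u)\geq c_0$.

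The main obstacle is this $\liminf$ step, for two interconnected reasons. First, as the introduction stresses, the clean identity $J_\epsilon(u)-\tfrac12 J'_\epsilon(u)u=\tfrac12\int|u|^2$ is \emph{not} available for the penalized functional $I_\epsilon$, so the passage to the limit cannot be reduced to an $L^2$-Fatou argument but must combine Lemma \ref{LI} with the convex-subdifferential structure of Lemma \ref{lema}(i). Second, localizing the concentration point inside $\overline{\Lambda}$ requires a careful comparison of the limiting constant-potential functionals, using $(V2)$ to prevent concentration on $\partial\Lambda$ or at infinity.
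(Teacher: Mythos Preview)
Your handling of (a) and (b) matches the paper. For the $\limsup$ half of (c) you are close in spirit, though the paper truncates the ground state to $u_R=\varphi_R u_0$ with compact support before testing; this makes the dominated-convergence step for $\int(V(\epsilon x)-V_0)u_R^2$ automatic even when $V$ is unbounded, whereas your direct use of $w$ needs an extra justification there.

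The substantial divergence is in the $\liminf$ half, and here you have made life much harder than necessary. What you call ``the main obstacle'' is in the paper a one-line observation: since $G_2(x,s)\leq F_2(s)$ (this is \eqref{ine}) and $V(\epsilon x)\geq V_0$, one has $I_\epsilon(u)\geq J_\epsilon(u)\geq J_0(u)$ for every nonnegative $u\in D(I_\epsilon)$. Hence for $u\in\mathcal N_\epsilon$ (positive by Remark 3.1 and Lemma~\ref{lema}), $I_\epsilon(u)=\max_{t\geq 0}I_\epsilon(tu)\geq\max_{t\geq 0}J_0(tu)\geq c_0$ by \eqref{sisi}, and part (b) gives $c_\epsilon\geq c_0$ outright. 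No concentration-compactness, no translated sequences, no Brezis--Lieb step.

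The route you sketch --- extracting a profile, localizing $\epsilon y_\epsilon$ in $\overline\Lambda$, identifying the limit equation --- is precisely Lemma~\ref{lea}, which the paper proves \emph{after} Lemma~\ref{lemma1} and whose proof \emph{invokes} $\lim c_{\epsilon_n}=c_0$ to conclude $V(y_0)=V_0$. So importing that machinery here is circular unless you restrict yourself to the $\limsup$ bound only, and even then the step you yourself flag as delicate (``a Brezis--Lieb-type passage'' yielding $\liminf c_\epsilon\geq J_0(u)$ without the identity $I_\epsilon-\tfrac12 I'_\epsilon(\cdot)\cdot=\tfrac12\int|\cdot|^2$) is left as a gesture toward Lemma~\ref{LI} and the subdifferential calculus rather than an argument. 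The pointwise comparison $I_\epsilon\geq J_0$ dissolves all of this.
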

\begin{proof}
\noindent (a): Follows from Lemma \ref{vicente} (ii). \\

\noindent (b): Let $u\in \mathcal{N}_{\epsilon}$ and let us consider $I_{\epsilon}(t^{*}u)<0$, for some $t^{*}>0$. If
$\gamma_{\epsilon}:[0, 1]\rightarrow H_{\epsilon}$ is the continuous path $\gamma_{\epsilon}(t)=t\cdot t^{*}u$, then
\begin{equation}\label{she}
c_{\epsilon}\leq \max_{t \in [0, 1]}I_{\epsilon}(\gamma_{\epsilon}(t))\leq \max_{t\geq 0}I_{\epsilon}(tu)=I_{\epsilon}(u)
\end{equation}
and consequently $c_{\epsilon}\leq  \displaystyle  \inf_{u \in \mathcal{N}_{\epsilon}}I_{\epsilon}(u)$.\\

Now we prove the reverse inequality. By Theorem \ref{teorema1}, there exits  $u_{\epsilon}\in H_{\epsilon}$ with $u_{\epsilon}(x)>0$ for all $x\in \mathbb{R}^N$ such that
\begin{equation*}
I_{\epsilon}(u_{\epsilon})=c_{\epsilon}\quad\text{and} \quad 0 \in \partial I_{\epsilon}(u_{\epsilon}).
\end{equation*}
Then $u_{\epsilon}\in \mathcal{N}_{\epsilon}$, and so,
\begin{equation*}
\inf_{u \in \mathcal{N}_{\epsilon}}I_{\epsilon}(u)\leq I_{\epsilon}(u_{\epsilon})=c_{\epsilon}.
\end{equation*}

\noindent (c):By \cite[Theorem 1.2]{sz}, the infimum in (\ref{sisi}) is such that $c_{0}=J_{0}(u_{0})$, for some positive function $u_{0}\in \mathcal{N}_{0}$. Note that,
if $\varphi\in C_{0}^{\infty}(\mathbb{R}^N)$, $0\leq \varphi\leq 1$, $\varphi\equiv 1$ in $B_{1}(0)$ and $\varphi\equiv 0$ in $B_{2}(0)^{c}$, defining
$\varphi_{R}:=\varphi(\cdot/ R)$ and $u_{R}(x)=\varphi_{R}(x)u_{0}(x)$, we have that
\begin{equation*}
u_{R}\rightarrow u_{0}\,\, \text{in}\,\, H^{1}(\mathbb{R}^N)\,\, \text{as}\,\, R\rightarrow +\infty.
\end{equation*}
Fixing $R>0$ and arguing as in the proof of (\ref{she}), for a fixed $\epsilon>0$ we find
$$
c_{\epsilon}\leq \max_{t\geq 0}I_{\epsilon}(tu_{R})=I_{\epsilon}(t_{\epsilon}u_{R}),
$$
and
\begin{equation*}
\int \Big( \vert\nabla u_{R}\vert^{2}+ (V(\epsilon x)+1) u_{R}^{2}\Big)dx=\int_{\Lambda_{\epsilon}} \frac{F'_{2}(t_{\epsilon}u_R)u_R}{t_{\epsilon}}dx+\int_{\mathbb{R}^N\backslash \Lambda_{\epsilon}}\frac{\widetilde{F}'_{2}(t_{\epsilon}u_R)u_R}{t_{\epsilon}} dx-\int \frac{F'_{1}(t_{\epsilon}u_R)u_R}{t_{\epsilon}}dx.
\end{equation*}
Since $V(\epsilon x)\rightarrow V_{0}$ as $\epsilon\rightarrow 0$, by the Lebesgue Dominated Convergence theorem, we have from the left side of the above equality that
\begin{equation*}
\lim_{\epsilon\rightarrow 0}\int \Big( \vert\nabla u_{R}\vert^{2}+ (V(\epsilon x)+1) u_{R}^{2}\Big)dx=\int \Big( \vert\nabla u_{R}\vert^{2}+ (V_{0}+1) u_{R}^{2}\Big)\,dx.
\end{equation*}
Assuming $t_{\epsilon}\rightarrow +\infty$ as $\epsilon\rightarrow 0$, since $\Lambda_{\epsilon}\rightarrow \mathbb{R}^N$ as $\epsilon\rightarrow 0$,
it is easy to verify that the right side of the above equality goes to $+\infty$ as $\epsilon\rightarrow 0$, which is a contradiction. Thus, $(t_{\epsilon})$
is bounded in $\mathbb{R}$ for $\epsilon$ small enough.
Moreover, since
\begin{eqnarray*}
I_{\epsilon}(t_{\epsilon}u_{R})&=&J_{0}(t_{\epsilon}u_{R})+\frac{t_{\epsilon}^{2}}{2}\int (V(\epsilon x)-V_{0}) u_{R}^{2}dx\\
&&+\int_{\Lambda_{\epsilon}} F_{2}(t_{\epsilon}u_{R})dx+\int_{\mathbb{R}^N\backslash \Lambda_{\epsilon}} \widetilde{F}_{2}(t_{\epsilon}u_{R})dx-\int F_{2}(t_{\epsilon}u_{R})dx\\
&\leq&J_{0}(t_{R}u_{R})+\frac{t_{\epsilon}^{2}}{2}\int (V(\epsilon x)-V_{0}) u_{R}^{2}dx,\\
\end{eqnarray*}
where $t_{R}>0$ satisfies
$$
J_{0}(t_{R}u_{R})=\max_{t\geq 0}J_{0}(tu_{R}).
$$
Using $\displaystyle \sup_{x \in B_R(0)}|V(\epsilon x)-V_{0}| \rightarrow 0$  as $\epsilon\rightarrow 0$, we get
\begin{equation}\label{J}
\limsup_{\epsilon \rightarrow 0}c_{\epsilon}\leq \underset{\epsilon\rightarrow 0}{\lim\sup}\,I_{\epsilon}(t_{\epsilon}u_{R})\leq J_{0}(t_{R}u_{R}).
\end{equation}
Now, we use the fact that $(t_{R})$ is also bounded for $R$ large enough, $u_{R}\leq u_0$ and $F_{1}$ is increasing for $t\geq 0$ to deduce that
$$
F_{1}(t_{R} u_{R})\leq F_{1}(k u_{0}),
$$
for some $k>0$. Since $u_{0}\in \mathcal{N}_{0}$, we can ensure that $F_{1}(k u_{0})\in L^{1}(\mathbb{R}^N)$ for all $k\geq 0$. Thus, if
$R_{n}\rightarrow +\infty$ and $t_{R_{n}}\rightarrow t_{*}$, the Lebesgue Dominated Convergence theorem yields
$$
F_{1}(t_{R_{n}} u_{R_{n}})\rightarrow F_{1}(t_{*} u_{0})\quad\text{in}\,\, L^{1}(\mathbb{R}^N)
$$
and
$$
F'_{1}(t_{R_{n}} u_{R_{n}})t_{R_{n}} u_{R_{n}}\rightarrow F'_{1}(t_{*} u_{0})t_{*} u_{0}\quad\text{in}\,\, L^{1}(\mathbb{R}^N).
$$
As an immediate consequence, $t_{R}\rightarrow 1$ as $R\rightarrow +\infty$ and
$$
J_{0}(t_{R}u_{R})\rightarrow J_{0}(u_{0})\quad\text{as}\,\,R\rightarrow +\infty.
$$
This combined with (\ref{J}) gives
$$
\underset{\epsilon\rightarrow 0}{\limsup}\,c_{\epsilon}\,\leq J_{0}(u_{0})=c_{0}.
$$
Inasmuch as $I_{\epsilon}(u)\geq J_{\epsilon}(u)\geq J_{0}(u)$, $\forall \epsilon>0$, $u\in D(I_\epsilon)$, and by part (b) with $\epsilon=0$,
the reverse inequality holds:
$$
\underset{\epsilon\rightarrow 0}{\lim\inf}\,\, c_{\epsilon}\geq c_{0}.
$$
Therefore,
$$
\underset{\epsilon\rightarrow 0}{\lim}\,\, c_{\epsilon}= c_{0}.
$$
\end{proof}

\begin{lemma}\label{lemFat}
Let $(\omega_{n})\subset \mathcal{N}_{0}$ be a sequence satisfying $J_{0}(\omega_{n})\rightarrow c_{0}$ such that $\omega_{n} \rightharpoonup \omega$ with $\omega \not=0$ and $J'_{0}(\omega)\omega \leq 0$. Then,  $\omega_{n} \to \omega$ in $H^{1}(\mathbb{R}^N)$.
\end{lemma}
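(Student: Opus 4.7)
First, by the definition of $\mathcal{N}_{0}$, the hypothesis $\omega_{n}\in\mathcal{N}_{0}$ gives $J_{0}(\omega_{n})=\tfrac{1}{2}|\omega_{n}|_{2}^{2}$, hence $|\omega_{n}|_{2}^{2}\to 2c_{0}$. Since $\omega_{n}\rightharpoonup\omega$ in $H^{1}(\mathbb{R}^{N})$, and therefore weakly in $L^{2}(\mathbb{R}^{N})$, weak lower semicontinuity yields $|\omega|_{2}^{2}\leq 2c_{0}$.

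The key step will be to identify $\omega$ as a ground state. The fiber map $g_{\omega}(t)=J_{0}(t\omega)$ has the explicit form $\alpha t^{2}-\beta t^{2}\log t^{2}$ with $\beta=\tfrac{1}{2}|\omega|_{2}^{2}>0$, so a direct calculation (or the argument of Lemma \ref{lema}) produces a unique $t_{\omega}>0$ at which $g_{\omega}$ attains its strict maximum, and one checks $t_{\omega}\omega\in\mathcal{N}_{0}$. The assumption $J_{0}'(\omega)\omega=g_{\omega}'(1)\leq 0$ forces $t_{\omega}\leq 1$. Conversely, Lemma \ref{lemma1}(b) at $\epsilon=0$ together with the definition of $\mathcal{N}_{0}$ yields
$$
c_{0}\;\leq\;J_{0}(t_{\omega}\omega)\;=\;\tfrac{t_{\omega}^{2}}{2}|\omega|_{2}^{2}\;\leq\;t_{\omega}^{2}c_{0},
$$
so $t_{\omega}\geq 1$. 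Hence $t_{\omega}=1$, which gives $\omega\in\mathcal{N}_{0}$, $J_{0}(\omega)=c_{0}$, and $|\omega|_{2}^{2}=2c_{0}=\lim|\omega_{n}|_{2}^{2}$. Combined with the weak convergence in the Hilbert space $L^{2}(\mathbb{R}^{N})$, this upgrades to $\omega_{n}\to\omega$ strongly in $L^{2}(\mathbb{R}^{N})$.

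To promote $L^{2}$ convergence to $H^{1}$ convergence, Sobolev embedding keeps $(\omega_{n})$ bounded in $L^{2^{*}}$, so interpolation with the strong $L^{2}$ convergence gives $\omega_{n}\to\omega$ in $L^{p}(\mathbb{R}^{N})$ for every $p\in[2,2^{*})$. The bound $|F_{2}(s)|\leq C|s|^{p}$, obtained by integrating \eqref{eq5} from zero, together with H\"older's inequality then forces $\int F_{2}(\omega_{n})\,dx\to\int F_{2}(\omega)\,dx$. Setting $\|u\|_{0}^{2}:=\int(|\nabla u|^{2}+(V_{0}+1)|u|^{2})\,dx$ and using the decomposition $J_{0}=\tfrac{1}{2}\|\cdot\|_{0}^{2}+\int F_{1}-\int F_{2}$, we obtain
$$
\tfrac{1}{2}\|\omega_{n}\|_{0}^{2}+\int F_{1}(\omega_{n})\,dx\;\longrightarrow\;\tfrac{1}{2}\|\omega\|_{0}^{2}+\int F_{1}(\omega)\,dx.
$$
Both terms on the left are weakly lower semicontinuous along $(\omega_{n})$ (the Hilbert norm trivially; $\int F_{1}$ by Fatou, since $F_{1}\geq 0$ and $\omega_{n}\to\omega$ a.e.\ along a subsequence), so the fact that their sum converges to the sum of the respective lower limits forces each to converge individually. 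In particular $\|\omega_{n}\|_{0}\to\|\omega\|_{0}$, and weak plus norm convergence in the Hilbert space $(H^{1}(\mathbb{R}^{N}),\|\cdot\|_{0})$ gives $\omega_{n}\to\omega$ in $H^{1}(\mathbb{R}^{N})$.

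I expect the main obstacle to be the sandwiching argument for $t_{\omega}$: the upper bound $t_{\omega}\leq 1$ uses the strict-max structure of the fiber map coming from the logarithmic nonlinearity, while the lower bound $t_{\omega}\geq 1$ relies on combining the ground-state characterization of $c_{0}$ with the weak-$L^{2}$ upper bound $|\omega|_{2}^{2}\leq 2c_{0}$; only by fitting these two pieces together does one obtain both $\omega\in\mathcal{N}_{0}$ and the norm identity $|\omega|_{2}^{2}=\lim|\omega_{n}|_{2}^{2}$ that drives the remainder of the proof. The closing step, separating the lower-semicontinuous parts ($\|\cdot\|_{0}^{2}$ and $\int F_{1}$) from the subcritical continuous part $\int F_{2}$, is then a standard Brezis--Lieb--type splitting.
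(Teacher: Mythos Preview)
Your proof is correct and follows essentially the same approach as the paper: both obtain $t_\omega\in(0,1]$ from $J_0'(\omega)\omega\le 0$, sandwich via $c_0\le J_0(t_\omega\omega)=\tfrac{t_\omega^2}{2}|\omega|_2^2\le\liminf\tfrac12|\omega_n|_2^2=c_0$ to force $t_\omega=1$ and $L^2$-convergence, then interpolate to $L^p$. The only minor variation is in the closing step: the paper uses the Nehari identity $\|\omega_n\|_0^2+\int F_1'(\omega_n)\omega_n=\int F_2'(\omega_n)\omega_n$ (passing the $F_2'$-term to the limit), whereas you use the energy identity $\tfrac12\|\omega_n\|_0^2+\int F_1(\omega_n)=J_0(\omega_n)+\int F_2(\omega_n)$ (passing the $F_2$-term); in both cases one finishes by the same Fatou/weak-lsc splitting of the two nonnegative summands.
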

\begin{proof}
Since $J'_{0}(\omega)\omega \leq 0$, there is $t \in (0,1]$ such that $t\omega \in \mathcal{N}_{0}$, and so,
$$
c_{0} \leq J_0(t\omega)= \frac{t^{2}}{2} \int |\omega|^{2}\,dx \leq \liminf_{n\rightarrow\infty}\displaystyle \frac{1}{2} \int |\omega_{n}|^{2}\,dx \leq\limsup_{n\rightarrow\infty}\displaystyle \frac{1}{2} \int |\omega_{n}|^{2}\,dx=\lim_{n\rightarrow\infty}J_{0}(\omega_{n})=c_{0}.
$$
The  above argument yields  $t=1$ and $\omega_{n}\rightarrow \omega$ in $L^{2}(\mathbb{R}^N)$. For the case $N\geq 3$,
since $(\omega_{n})$ is bounded in $L^{2^{*}}(\mathbb{R}^N)$, by interpolation on the Lebesgue spaces, it follows that
$$
\omega_{n}\rightarrow \omega \quad\text{in}\,\, L^{p}(\mathbb{R}^N),\,\, \text{for all}\,\, 2\leq p<2^{*},
$$
and therefore $\int F'_{2}(\omega_{n})\omega_{n}dx\rightarrow  \int F'_{2}(\omega)\omega dx$.
For the case $N=1, 2$, for any $q>2$, since $(\omega_{n})$ is bounded in $L^{q}(\mathbb{R}^N)$, using interpolation on the Lebesgue spaces again, we have that
$$
\omega_{n}\rightarrow \omega \quad\text{in}\,\, L^{p}(\mathbb{R}^N),\,\, \text{for all}\,\, 2\leq p<q.
$$
Since $q>2$ is arbitrary, thus
$$
\omega_{n}\rightarrow \omega \quad\text{in}\,\, L^{p}(\mathbb{R}^N),\,\, \text{for all}\,\, 2\leq p<\infty.
$$
and $\int F'_{2}(\omega_{n})\omega_{n}dx\rightarrow  \int F'_{2}(\omega)\omega dx$.

Finally, using the equalities  $J'_{0}(\omega)\omega=0$ and
\begin{eqnarray*}
\int \Big(\vert \nabla \omega_{n}\vert^{2}+(V_{0}+1)\vert \omega_{n}\vert^{2}\Big)dx+\int  F'_{1}(\omega_{n})\omega_{n} dx =\int  F'_{2}(\omega_{n})\omega_{n} dx,
\end{eqnarray*}
we get
\begin{eqnarray*}
\int \Big(\vert \nabla \omega_{n}\vert^{2}+(V_{0}+1)\vert \omega_{n}\vert^{2}\Big)dx\rightarrow \int \Big(\vert \nabla \omega \vert^{2}+(V_{0}+1)\vert \omega \vert^{2}\Big)dx\quad\text{in}\,\,H^{1}(\mathbb{R}^N),
\end{eqnarray*}
from where it follows the desired result.

\end{proof}

\begin{lemma} \label{lea} Let $\epsilon_{n}\rightarrow 0$ and  $u_{n}\in H_{\epsilon_{n}}$ such that $I_{\epsilon_{n}}(u_{n})=c_{\epsilon_{n}}$
and $I'_{\epsilon_{n}}(u_{n})=0$. Then there exists the sequence $(y_{n})\subset \mathbb{R}^N$
such that $\psi_{n}(x)=u_{n}(x+y_{n})$ has a convergent subsequence in $H^{1}(\mathbb{R}^N)$.
Moreover, there is $y_{0}\in\Lambda$ such that
$$
\lim_{n\rightarrow\infty}\epsilon_{n}y_{n}=y_{0}\quad \text{and}\quad V(y_{0})=V_{0}.
$$
\end{lemma}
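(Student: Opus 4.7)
The plan is a concentration-compactness argument in the spirit of del Pino--Felmer \cite{DF}, adapted to the logarithmic setting. \textbf{Step 1 (boundedness and non-vanishing).} By Lemma \ref{lemma1}(c), $c_{\epsilon_n}\to c_0$, so $(I_{\epsilon_n}(u_n))$ is bounded; Lemma \ref{boundedness} then gives $(u_n)$ bounded in $H_{\epsilon_n}$, and hence uniformly in $H^1(\mathbb{R}^N)$ since $V+1\geq V_0+1>0$. To exclude vanishing, suppose $\sup_{y\in\mathbb{R}^N}\int_{B_R(y)}u_n^2\,dx\to 0$ for every $R>0$. Lions' lemma then yields $u_n\to 0$ in $L^p(\mathbb{R}^N)$ for each $p\in(2,2^*)$; combined with $G'_2(x,s)\leq F'_2(s)$ and the subcritical bound (\ref{eq5}), this forces $\int G'_2(\epsilon_n x,u_n)u_n\,dx\to 0$, and the Nehari identity $I'_{\epsilon_n}(u_n)u_n=0$ together with $F'_1(s)s\geq 0$ gives $\|u_n\|_{\epsilon_n}\to 0$; a parallel estimate on the nonlinear terms of $I_{\epsilon_n}(u_n)$ shows $I_{\epsilon_n}(u_n)\to 0$, contradicting $c_0>0$. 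Hence there exist $R,\beta>0$ and $(y_n)\subset\mathbb{R}^N$ with $\liminf_n\int_{B_R(y_n)}u_n^2\,dx\geq\beta$, and $\psi_n(x):=u_n(x+y_n)$ satisfies $\psi_n\rightharpoonup \psi\not\equiv 0$ in $H^1(\mathbb{R}^N)$ with $\psi\geq 0$, along a subsequence.

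\textbf{Step 2 ($\epsilon_n y_n\to y_0\in\overline{\Lambda}$).} Suppose, for contradiction, that along a subsequence either $|\epsilon_n y_n|\to\infty$ or $\epsilon_n y_n\to y_0\notin\overline{\Lambda}$. Since $\Lambda$ is bounded and $\overline{\Lambda}$ is closed, in either case, for each fixed $x$ and $n$ large, $\epsilon_n(x+y_n)\notin\Lambda$, so $G'_2(\epsilon_n(x+y_n),\psi_n)=\widetilde{F}'_2(\psi_n)\leq l\psi_n$ (by the choice of $a_0$ and the monotonicity of $F'_2(s)/s$). Testing the translated equation for $\psi_n$ against non-negative $\phi\in C_c^\infty(\mathbb{R}^N)$ and passing to the liminf, using weak $H^1$-convergence, $V\geq V_0$, and Fatou on the non-negative $F'_1(\psi_n)\phi$, we obtain
$$
\int\nabla\psi\cdot\nabla\phi\,dx+(V_0+1-l)\int\psi\phi\,dx+\int F'_1(\psi)\phi\,dx\leq 0.
$$
Approximating $\psi$ by bounded, compactly supported test functions and using $V_0+1\geq 2l$ (so the coefficient is $\geq l>0$) together with $F'_1(\psi)\psi\geq 0$ forces $\psi\equiv 0$, a contradiction. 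Hence, up to subsequence, $\epsilon_n y_n\to y_0\in\overline{\Lambda}$.

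\textbf{Step 3 ($V(y_0)=V_0$, $y_0\in\Lambda$, strong convergence).} Assume provisionally $y_0\in\Lambda$; the boundary case is excluded below. Since $\Lambda$ is open, for every $x$ and $n$ large, $\epsilon_n(x+y_n)\in\Lambda$, so $G'_2=F'_2$ on bounded sets; taking the weak limit in the translated equation shows that $\psi\geq 0$ is a non-trivial solution of $-\Delta\psi+V(y_0)\psi=\psi\log\psi^2$, so $\psi\in\mathcal{N}_{V(y_0)}$ and $J_{V(y_0)}(\psi)=\frac{1}{2}\|\psi\|_2^2\geq c_{V(y_0)}$, where $c_V$ denotes the ground-state level of the constant-potential problem with $V_0$ replaced by $V$. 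Let $t_n>0$ be such that $t_n\psi_n\in\mathcal{N}_{V(y_0)}$ (unique by the analogue of Lemma \ref{lema}); by arguments parallel to those in the proof of Lemma \ref{lemma1}(c) one shows $t_n\to 1$ and $J_{V(y_0)}(t_n\psi_n)\to c_0$. A version of Lemma \ref{lemFat} for the constant potential $V(y_0)$ then yields $\psi_n\to\psi$ strongly in $H^1(\mathbb{R}^N)$, with $J_{V(y_0)}(\psi)=c_0$. This gives $c_{V(y_0)}\leq c_0=c_{V_0}$, and the strict monotonicity of $V\mapsto c_V$ (a standard consequence of the identity $c_V=\frac{1}{2}\inf_{\mathcal{N}_V}\|u\|_2^2$; see \cite{AlvesdeMorais}) forces $V(y_0)\leq V_0$, hence $V(y_0)=V_0$. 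Since $\min_{\partial\Lambda}V>V_0$ by $(V2)$, $y_0\in\partial\Lambda$ is impossible, so $y_0\in\Lambda$, vindicating the assumption.

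\textbf{Main obstacle.} The central technical difficulty is Step 2: passing to the liminf in the nonlinear term $F'_1(\psi_n)\phi$ and subsequently testing against $\psi$ itself, given that $F'_1$ is not of class $C^1$ at $0$. This is overcome by approximating $\psi$ through bounded, compactly supported test functions in the spirit of Lemma \ref{vicente}(iii), exploiting the positivity $F'_1(s)s\geq 0$ from (\ref{eq2}). A secondary subtlety is ruling out $y_0\in\partial\Lambda$ in Step 3, handled by first establishing $V(y_0)=V_0$ via the monotonicity of $V\mapsto c_V$ and then invoking the local condition $\min_{\partial\Lambda}V>V_0$ in $(V2)$.
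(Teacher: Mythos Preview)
Your Steps 1 and 2 follow the paper's argument closely and are fine. The genuine gap is in Step 3, where the logic is circular. You write ``Assume provisionally $y_0\in\Lambda$'' in order to get $G'_2=F'_2$ on the (translated) support and conclude that $\psi$ solves the limit equation; from this you deduce $V(y_0)=V_0$, and only then exclude $y_0\in\partial\Lambda$. But if $y_0\in\partial\Lambda$, the identification $G'_2(\epsilon_n(x+y_n),\cdot)=F'_2(\cdot)$ on compact sets fails (the penalized nonlinearity $\widetilde{F}'_2$ may remain active near $x$), so your derivation of $V(y_0)=V_0$ is unavailable precisely in the case you need to rule out. The same circularity infects the strong-convergence argument: the chain $c_0\leq J_{V(y_0)}(t_n\psi_n)\leq I_{\epsilon_n}(u_n)$ requires $J_{V(y_0)}\leq I_{\epsilon_n}$ pointwise, which in turn needs $V(y_0)\leq V(\cdot)$, i.e.\ $V(y_0)=V_0$ --- again the conclusion you are trying to reach.

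The paper avoids this by working with the \emph{inequality} $G'_2(x,s)\leq F'_2(s)$, valid for all $x$, rather than the equality. Testing the translated equation with compactly supported approximations of $\psi$ and using only this bound yields
\[
\int\big(|\nabla\psi|^2+(V(y_0)+1)|\psi|^2\big)\,dx+\int F'_1(\psi)\psi\,dx\leq\int F'_2(\psi)\psi\,dx,
\]
i.e.\ $J'_{V(y_0)}(\psi)\psi\leq 0$, with no hypothesis on the location of $y_0$ beyond $y_0\in\overline{\Lambda}$. Hence there is $s_1\in(0,1]$ with $s_1\psi\in\mathcal{N}_{V(y_0)}$, and the comparison $c_{V(y_0)}\leq J_{V(y_0)}(s_1\psi)\leq\liminf_n c_{\epsilon_n}=c_0$ gives $V(y_0)\leq V_0$, hence $V(y_0)=V_0$; now $(V2)$ legitimately excludes $y_0\in\partial\Lambda$. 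For the strong convergence the paper projects onto $\mathcal{N}_0$ (not $\mathcal{N}_{V(y_0)}$): with $s_n\psi_n\in\mathcal{N}_0$ one has the squeeze $c_0\leq J_0(s_n\psi_n)\leq\max_{t\geq0}I_{\epsilon_n}(t\psi_n)=c_{\epsilon_n}\to c_0$ directly from (\ref{ine}) and $V\geq V_0$, after which Lemma~\ref{lemFat} applies. Rewriting your Step~3 along these lines removes the circularity.
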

\begin{proof} \mbox{} Taking into account $u_{n}\in H_{\epsilon_{n}}$ such that $I_{\epsilon_{n}}(u_{n})=c_{\epsilon_{n}}$ and  Lemma \ref{boundedness}, it is
easy to see that $(u_{n})$ is bounded in $H_{\epsilon_{n}}$. Moreover, $(u_{n})$ is also bounded in $H^{1}(\mathbb{R}^N)$. Using \cite{lions},
there exist $r, \gamma>0$ and a sequence $(y_{n})\subset \mathbb{R}^N$ such that
\begin{equation}\label{lion1}
\underset{n\rightarrow\infty}{\lim\sup}\int_{B_{r}(y_{n})}\vert u_{n}(x)\vert^{2}dx\geq\gamma.
\end{equation}
Otherwise, we can conclude that
$$
u_{n}\rightarrow 0 \quad\text{in}\,\, L^{p}(\mathbb{R}^N),\,\, \text{for all}\,\, 2< p<2^{*}.
$$
and $\int F'_{2}(u_{n})u_{n}dx\rightarrow  0$, it yields that $I_{\epsilon_n}(u_{n})=c_{\epsilon_n} \rightarrow 0$ as $\rightarrow\infty$, which is a contradiction, since $c_{\epsilon_n} \to c_0>0$. Setting $\psi_{n}(x)=u_{n}(x+y_{n})$, then
 there is $\psi\in H^{1}(\mathbb{R}^N)\backslash \{0\}$
such that
\begin{equation}\label{weak}
\psi_{n}\rightharpoonup \psi\quad \text{in}\,\, H^{1}(\mathbb{R}^N)
\end{equation}
and
\begin{equation}\label{positive}
\int_{B_{r}(0)}\vert \psi(x)\vert^{2}dx\geq\gamma.
\end{equation}
In the sequel we will prove that the sequence $(\epsilon_{n}y_{n})$ is bounded. To this end, it is enough to show the following claim.

\begin{claim} \label{A1} $\underset{n\rightarrow\infty}{\lim}\text{dist}(\epsilon_{n}y_{n}, \overline{\Lambda})=0$.\\

\end{claim}

Indeed, if the claim does not hold, there exist $\delta>0$ and a subsequence of $(\epsilon_{n}y_{n})$, still denoted by itself, such that,
$$
\text{dist}(\epsilon_{n}y_{n}, \overline{\Lambda})\geq \delta, \quad \forall n\in \mathbb{N}.
$$		
Consequently, there is $r>0$ such that
$$
B_{r}(\epsilon_{n}y_{n})\subset \Lambda^{c},  \quad \forall n\in \mathbb{N}.
$$	
Using the fact that $\psi$ is a nonnegative function, there is a sequence of nonnegative functions $(\omega_{j})\subset H^{1}(\mathbb{R}^N)$
such that $\omega_{j}$ has a compact support in $\mathbb{R}^N$ and $\omega_{j}\rightarrow \psi$ in $H^{1}(\mathbb{R}^N)$ as $j\rightarrow \infty$. Now,
fixing $j>0$ and using $w_{j}$ as a test function, we have
\begin{equation}\label{test}
\int \Big( \nabla \psi_{n}\nabla \omega_{j} + (V(\epsilon_{n} x+ \epsilon_{n}y_{n})+1) \psi_{n} \omega_{j}\Big)dx=\int  G'_{2}(\epsilon_{n} x+ \epsilon_{n}y_{n}, \psi_{n})\omega_{j}dx-\int F'_{1}(\psi_{n})\omega_{j}dx.
\end{equation}
Note that
\begin{equation*}
\int G'_{2}(\epsilon_{n} x+ \epsilon_{n}y_{n}, \psi_{n})\omega_{j}dx=\int_{B_{\frac{r}{\epsilon_{n}}}(0)} G'_{2}(\epsilon_{n} x+ \epsilon_{n}y_{n}, \psi_{n})\omega_{j}dx+ \int_{\mathbb{R}^N\backslash B_{\frac{r}{\epsilon_{n}}}(0)} G'_{2}(\epsilon_{n} x+ \epsilon_{n}y_{n}, \psi_{n})\omega_{j}dx
\end{equation*}
and so,
\begin{equation*}
\int G'_{2}(\epsilon_{n} x+ \epsilon_{n}y_{n}, \psi_{n})\omega_{j}dx\leq l\int_{B_{\frac{r}{\epsilon_{n}}}(0)}\psi_{n}\omega_{j}dx+ \int_{\mathbb{R}^N\backslash B_{\frac{r}{\epsilon_{n}}}(0)} F'_{2}(\psi_{n})\omega_{j}dx.
\end{equation*}
Therefore,
\begin{eqnarray*}
&&\int \Big( \nabla \psi_{n}\nabla \omega_{j} + (V(\epsilon_{n} x+ \epsilon_{n}y_{n})+1) \psi_{n} \omega_{j}\Big)dx\\
&\leq &l\int_{B_{\frac{r}{\epsilon_{n}}}(0)}\psi_{n}\omega_{j}dx+ \int_{\mathbb{R}^N\backslash B_{\frac{r}{\epsilon_{n}}}(0)} F'_{2}(\psi_{n})\omega_{j}dx-\int F'_{1}(\psi_{n})\omega_{j}dx,
\end{eqnarray*}
implying that
\begin{eqnarray*}
\int_{\mathbb{R}^N}\Big( \nabla \psi_{n}\nabla \omega_{j} + A \psi_{n} \omega_{j}\Big)dx
\leq  \int_{\mathbb{R}^N\backslash B_{\frac{r}{\epsilon_{n}}}(0)} F'_{2}(\psi_{n})\omega_{j}dx,
\end{eqnarray*}
where $A=V_{0}+1-l>0$. As $\omega_{j}$ has a compact support in $\mathbb{R}^N$ and $\epsilon_{n}\rightarrow 0$, the boundedness of $(\psi_{n})$ imply that
 $$
 \int_{\mathbb{R}^N\backslash B_{\frac{r}{\epsilon_{n}}}(0)} F'_{2}(\psi_{n})\omega_{j}dx\rightarrow 0\quad\text{as}\quad n\rightarrow\infty,
 $$
 and
 \begin{eqnarray*}
\int\Big( \nabla \psi_{n}\nabla \omega_{j} + A\psi_{n} \omega_{j}\Big)dx\rightarrow \int\Big( \nabla \psi\nabla \omega_{j} + A\psi \omega_{j}\Big)dx, \quad\text{as}\quad n\rightarrow\infty.
\end{eqnarray*}
Hence,
 \begin{eqnarray*}
 \int \Big( \nabla \psi\nabla \omega_{j} + A\psi \omega_{j}\Big)dx\leq 0.
\end{eqnarray*}
Since $j$ is arbitrary, taking the limit of $j\rightarrow +\infty$, we obtain
 \begin{eqnarray*}
 \int \Big( \vert \nabla \psi\vert^{2} + A\vert \psi \vert^{2}\Big)dx= 0,
\end{eqnarray*}
which contradicts (\ref{positive}). This proves Claim 3.1.

From Claim 3.1, there is a subsequence of $(\epsilon_{n}y_{n})$ and $y_{0}\in \overline{\Lambda}$  such that
 $$
 \lim_{n\rightarrow\infty} \epsilon_{n}y_{n}=y_{0}.
 $$
\begin{claim} \label{A2} $y_{0}\in \Lambda$.\\

\end{claim}
Indeed, by using the definition of $G'_{2}$ and (\ref{test}), we have that
\begin{equation*}
\int \Big( \nabla \psi_{n}\nabla \omega_{j} + (V(\epsilon_{n} x+ \epsilon_{n}y_{n})+1) \psi_{n} \omega_{j}\Big)dx+ \int  F'_{1}(\psi_{n})\omega_{j}dx\leq \int F'_{2}(\psi_{n})\omega_{j}dx.
\end{equation*}
By using (\ref{weak}) and the fact that $\omega_{j}$ has a compact support, letting $n\rightarrow\infty$, we have
\begin{equation*}
\int \Big( \nabla \psi\nabla \omega_{j} + (V(y_{0})+1) \psi \omega_{j}\Big)dx+ \int  F'_{1}(\psi)\omega_{j}dx\leq \int F'_{2}(\psi)\omega_{j}dx.
\end{equation*}
Now, taking the limit of $j\rightarrow +\infty$, it yields that
\begin{equation*}
\int \Big( \vert \nabla \psi \vert^{2}+ (V(y_{0})+1) \vert \psi \vert^{2}\Big)dx+\int F'_{1}(\psi)\psi dx\leq \int F'_{2}(\psi)\psi dx.
\end{equation*}
Hence, there is $s_{1}\in (0, 1]$ such that
\begin{equation*}
s_{1}\psi\in \mathcal{N}_{V(y_{0})}=\{u\in H^{1}(\mathbb{R}^N)\backslash \{0\}: J'_{V(y_{0})}(u)u=0\},
\end{equation*}
where $J_{V(y_{0})}: H^{1}(\mathbb{R}^N)\rightarrow \mathbb{R}$ is given by
$$
J_{V(y_{0})}(u)=\displaystyle \frac{1}{2}\int \big (|\nabla u|^2+V(y_{0})|u|^2\big)dx-\frac{1}{2}\int u^{2}\log u^{2}dx.
$$
If $c_{V(y_{0})}$ denotes the mountain pass level associated with $J_{V(y_{0})}$, we must have
$$
c_{V(y_{0})}\leq J_{V(y_{0})}(s_{1} \psi)\leq \underset{n\rightarrow +\infty}{\lim\inf}\,I_{\epsilon_{n}}(u_{n})=\underset{n\rightarrow +\infty}{\lim\inf}\,c_{\epsilon_{n}}=c_{0}=c_{V(0)}.
$$
Thus,
$$
c_{V(y_{0})}\leq c_{V(0)},
$$
from where it follows that
$$
V(y_{0})\leq V(0)\equiv V_{0}.
$$
As $V_{0}=\displaystyle \inf_{x\in \overline{\Lambda}}V(x)$, the above inequality implies that
$$
V(y_{0})=V_{0}.
$$
Moreover, by $(V2)$, $y_{0}\not\in \partial \Lambda$. Then, $y_{0}\in \Lambda$ and the proof of Claim 3.2 is complete.

Now, we are going to prove that $\psi_{n}\rightarrow \psi$ in $H^{1}(\mathbb{R}^N)$.
Fixing $s_{n}>0$ such that $\tilde{\psi}_{n}=s_{n}\psi_{n}\in  \mathcal{N}_{0}$. By (\ref{ine}), we can see that
$$
c_{0}\leq J_{0}(\tilde{\psi}_{n})\leq \max_{t\geq 0}I_{\epsilon_{n}}(t\psi_{n})=I_{\epsilon_{n}}(u_{n})
$$
which together with Lemma \ref{lemma1} implies that $J_{0}(\tilde{\psi}_{n})\rightarrow c_{0}$. Since $(\psi_{n})$ and $(\tilde{\psi}_{n})$ are bounded in $H^{1}(\mathbb{R}^N)$ and $\tilde{\psi}_{n}\not\rightarrow 0$
in $H^{1}(\mathbb{R}^N)$, we deduce that for some subsequence, still denote by itself, $s_{n}\rightarrow s^{*}> 0$. Moreover, using that $u_n$ is a solution, we also have that $J'_0(\tilde{\psi})\tilde{\psi} \leq 0$. Since $\tilde{\psi} \not= 0$, by Lemma \ref{lemFat},
$$
\tilde{\psi}_{n}\rightarrow \tilde{\psi}\quad \text{in}\quad H^{1}(\mathbb{R}^N)
$$
or equivalently
$$
{\psi}_{n}\rightarrow {\psi}\quad \text{in}\quad H^{1}(\mathbb{R}^N),
$$
which finishes the proof.
\end{proof}
\begin{lemma}\label{le51}
Let $(\psi_{n})$ the sequence given in Lemma \ref{lea}. Then, $(\psi_{n}) \subset L^{\infty}(\mathbb{R}^N)$ and there is $K>0$
such that
$$
\vert \psi_{n}\vert_{\infty}\leq K, \quad \forall n\in \mathbb{N}.
$$
Moreover,
\begin{equation*}
  \psi_{n}(x)\rightarrow 0\quad\text{as}\quad \vert x\vert\rightarrow +\infty,
\end{equation*}
uniformly in $n\in \mathbb{N}$.
\end{lemma}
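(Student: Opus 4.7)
The plan is to first establish a uniform $L^{\infty}$ bound via Moser iteration, and then use the strong $H^{1}$ convergence $\psi_{n} \to \psi$ from Lemma \ref{lea} together with a local version of the same iteration to upgrade this to uniform vanishing at infinity.

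Each $\psi_{n}(x) = u_{n}(x + y_{n})$ is a positive weak solution of
\[
-\Delta \psi_{n} + (V(\epsilon_{n} x + \epsilon_{n} y_{n}) + 1)\psi_{n} + F'_{1}(\psi_{n}) = G'_{2}(\epsilon_{n} x + \epsilon_{n} y_{n}, \psi_{n}) \quad \text{in } \mathbb{R}^{N}.
\]
Since $V(\epsilon_{n} x + \epsilon_{n} y_{n}) + 1 \geq V_{0} + 1 > 0$ and $F'_{1}(\psi_{n}) \geq 0$ by (\ref{eq2}), while $G'_{2}(x,t) \leq F'_{2}(t) \leq C |t|^{p-1}$ for any fixed $p \in (2, 2^{*})$ by (\ref{eq5}), $\psi_{n}$ is a nonnegative subsolution of
\[
-\Delta \psi_{n} \leq C \psi_{n}^{p-1} \quad \text{in } \mathbb{R}^{N}.
\]
A standard Moser iteration on this inequality, exploiting the uniform $H^{1}$ bound obtained from Corollary \ref{coro} and Lemma \ref{lea}, produces a constant $K>0$ depending only on that bound, on $p$ and on $N$, such that $|\psi_{n}|_{\infty} \leq K$ for every $n$.

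For uniform decay, Lemma \ref{lea} gives $\psi_{n} \to \psi$ in $H^{1}(\mathbb{R}^{N})$, so in particular
\[
\lim_{R \to \infty} \sup_{n \in \mathbb{N}} \int_{|y| \geq R} \psi_{n}^{2}\, dy = 0.
\]
Applying the local version of the Moser iteration to the same subsolution inequality yields, for every $x \in \mathbb{R}^{N}$,
\[
\sup_{B_{1/2}(x)} \psi_{n} \leq C \Big( \int_{B_{1}(x)} \psi_{n}^{2}\, dy \Big)^{1/2},
\]
with a constant $C = C(K, N, p)$ independent of $n$ and of $x$; here the uniform $L^{\infty}$ bound from the previous step is what makes the iteration constants uniform. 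Taking $|x| \geq R + 1$ and sending $R \to \infty$ forces $\psi_{n}(x) \to 0$ as $|x| \to \infty$, uniformly in $n$.

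The delicate point is the presence of the nonsmooth, singular term $F'_{1}(\psi_{n})$, which behaves like $-\psi_{n}\log \psi_{n}^{2}$ near zero and is not locally integrable against arbitrary test functions. When testing the equation against $\psi_{n}^{2\beta + 1}\eta^{2}$ in the Moser scheme, one must first truncate $\psi_{n}$ (e.g.\ by $\min\{\psi_{n}, M\}$) to ensure integrability of $F'_{1}(\psi_{n}) \psi_{n}^{2\beta + 1}\eta^{2}$, use the sign property $F'_{1}(\psi_{n}) \geq 0$ to simply discard that integral, and then let $M \to \infty$ via monotone convergence. Once this technical step is in place, both the global iteration giving the uniform $L^{\infty}$ bound and the local iteration giving the uniform decay proceed along entirely classical lines.
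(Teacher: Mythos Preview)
Your argument is correct and rests on the same engine as the paper's proof: Moser iteration, the sign condition $F'_1(s)s\geq 0$ to discard the singular logarithmic term after truncating $\psi_n$, and the strong convergence $\psi_n\to\psi$ in $H^1(\mathbb{R}^N)$ from Lemma~\ref{lea} to force uniform smallness outside large balls. The organization differs slightly: the paper iterates with the critical bound $G'_2(x,t)\leq lt+Ct^{2^*-1}$, which forces an absorption step using the uniform smallness of $|\psi_n|_{L^{2^*}(|x|\geq R/2)}$ before the bootstrap can start, and it establishes the decay at infinity first and the global $L^\infty$ bound afterwards via covering; you instead use the subcritical bound $G'_2\leq F'_2\leq C|t|^{p-1}$ from (\ref{eq5}), get the uniform $L^\infty$ bound first, and then linearize the right-hand side as $C\psi_n^{p-1}\leq CK^{p-2}\psi_n$ so that the local De Giorgi--Nash--Moser estimate $\sup_{B_{1/2}(x)}\psi_n\leq C\|\psi_n\|_{L^2(B_1(x))}$ applies directly. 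Both routes are standard; yours avoids the absorption step and the separate case analysis in $N$ at the price of invoking the global iteration as a black box.
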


\begin{proof}
For any $R>0$, $0<r<\frac{R}{2}$, let $\eta\in C^{\infty}(\mathbb{R}^N)$, $0\leq\eta\leq 1$ with $\eta(x)=1$ if $\vert x\vert\geq R$ and $\eta(x)=0$
if $\vert x\vert\leq R-r$ and $\vert \nabla \eta\vert\leq\frac{2}{r}$. For $L>0$, let
\begin{equation*}
\psi_{L, n}:=\left\{
\begin{array}{l}
\psi_{n},\quad \quad \quad \text{if}\,\,\psi_{n}\leq L\\
L ,\quad\quad\quad\,\, \text{if}\,\,\psi_{n}\geq L.
\end{array}%
\right.
\end{equation*}
We first deal with the case $N>2$. To this end, let $z_{L, n}=\eta^{2}\psi_{L, n}^{2(\beta-1)}\psi_{n}$ and  $\omega_{L, n}=\eta\psi_{n}\psi_{L, n}^{\beta-1}$
with $\beta>1$ to be determined later. Since $0\leq\eta\leq 1$ and $\psi_{L, n}\leq L$, it yields that $\eta^{2}\psi_{L, n}^{2(\beta-1)}\psi_{n}\leq L^{2(\beta-1)}\psi_{n}$ and $F'_{1}(\psi_{n})\eta^{2}\psi_{L, n}^{2(\beta-1)}\psi_{n}\in L^{1}(\mathbb{R}^N)$. Taking
$z_{L, n}$ as a test function, we have
\begin{eqnarray}\label{ine31}
&&\int\eta^{2}\psi_{L, n}^{2(\beta-1)}\vert \nabla \psi_{n}\vert^{2}dx+\int(V(\epsilon_{n}x+\epsilon_{n}y_{n})+1)\eta^{2}\psi_{L, n}^{2(\beta-1)}\vert  \psi_{n}\vert^{2}dx+\int F'_{1}(\psi_{n})\eta^{2}\psi_{L, n}^{2(\beta-1)}\psi_{n}dx\nonumber\\
&=&-2\int \eta\psi_{L, n}^{2(\beta-1)}\psi_{n} \nabla \psi_{n}\nabla \eta dx-2(\beta-1)\int \eta^{2}\psi_{L, n}^{2\beta-3}\psi_{n} \nabla \psi_{n}\nabla \psi_{L, n} dx\nonumber\\
&&+\int G'_{2}(\epsilon_{n}x+\epsilon_{n}y_{n}, \psi_{n})\eta^{2}\psi_{L, n}^{2(\beta-1)}\psi_{n}dx.
\end{eqnarray}
From the definition of $G_{2}$,  we have that
\begin{eqnarray}\label{ine32}
G'_{2}( x, t)\leq lt+Ct^{2^{*}-1}, \quad  \forall (x, t)\in \mathbb{R}^N\times \mathbb{R}^{+}.
\end{eqnarray}
Using (\ref{eq2}), (\ref{ine31}) and (\ref{ine32}), we can obtain that
\begin{eqnarray}\label{ine33}
\int \eta^{2}\psi_{L, n}^{2(\beta-1)}\vert \nabla \psi_{n}\vert^{2}dx\leq 2\int \eta\psi_{L, n}^{2(\beta-1)}\psi_{n} \vert \nabla \psi_{n}\vert \vert\nabla \eta \vert dx
+C\int \eta^{2}\psi_{L, n}^{2(\beta-1)}\vert \psi_{n}\vert^{2^{*}}dx.
\end{eqnarray}
For each $\delta>0$, using Young's inequality, we have from (\ref{ine33}) that
\begin{eqnarray*}
\int \eta^{2}\psi_{L, n}^{2(\beta-1)}\vert \nabla \psi_{n}\vert^{2}dx&\leq& 2\delta\int \eta^{2}\psi_{L, n}^{2(\beta-1)} \vert \nabla \psi_{n}\vert^{2}dx+2C_{\delta}\int \psi_{L, n}^{2(\beta-1)}\vert \psi_{n}\vert^{2} \vert \nabla \eta\vert^{2} dx\\
&&+C\int_{\mathbb{R}^N}\eta^{2}\psi_{L, n}^{2(\beta-1)}\vert \psi_{n}\vert^{2^{*}}dx.
\end{eqnarray*}
Choosing $\delta\in (0, \frac{1}{4})$, it yields
\begin{eqnarray}\label{ine34}
\int \eta^{2}\psi_{L, n}^{2(\beta-1)}\vert \nabla \psi_{n}\vert^{2}dx\leq C\int \psi_{L, n}^{2(\beta-1)}\vert \psi_{n}\vert^{2} \vert \nabla \eta\vert^{2} dx+C\int \eta^{2}\psi_{L, n}^{2(\beta-1)}\vert \psi_{n}\vert^{2^{*}}dx.
\end{eqnarray}
On the other hand, by the Sobolev and H\"{o}lder inequalities, we have
\begin{eqnarray}\label{ine35}
\vert\omega_{L, n}\vert_{2^{*}}^{2}&\leq &C\int \vert \nabla\omega_{L, n}\vert^{2}dx=C\int \vert \nabla(\eta\psi_{n}\psi_{L, n}^{\beta-1})\vert^{2}dx\nonumber\\
&\leq& C\beta^{2}\Big(\int \psi_{L, n}^{2(\beta-1)}\vert \psi_{n}\vert^{2} \vert \nabla \eta\vert^{2} dx+\int \eta^{2}\psi_{L, n}^{2(\beta-1)}\vert \nabla\psi_{n}\vert^{2}dx\Big).
\end{eqnarray}
Combining (\ref{ine34}) and (\ref{ine35}), we have
\begin{eqnarray}\label{ine36}
\vert\omega_{L, n}\vert_{2^{*}}^{2}
\leq C\beta^{2}\Big(\int \psi_{L, n}^{2(\beta-1)}\vert \psi_{n}\vert^{2} \vert \nabla \eta\vert^{2} dx+\int \eta^{2}\psi_{L, n}^{2(\beta-1)}\vert \psi_{n}\vert^{2^{*}}dx\Big).
\end{eqnarray}
Let $\beta=\frac{2^{*}}{2}$, by the definition of $\omega_{L, n}$ and (\ref{ine36}), we rewrite the last inequality as
\begin{eqnarray*}
\Big(\int (\eta\psi_{n}\psi_{L, n}^{(2^{*}-2)/2})^{2^{*}}\Big)^{2/2^{*}}
&\leq& C(N, 2)\Big\{\Big(\int(\eta\psi_{n}\psi_{L, n}^{(2^{*}-2)/2} )^{2^{*}}dx\Big)^{2/2^{*}}\Big(\int_{\vert x\vert\geq R-r} \vert \psi_{n}\vert^{2^{*}}\Big)^{(2^{*}-2)/2}\\
&&+\int\psi_{L, n}^{2^{*}-2}\vert \psi_{n}\vert^{2} \vert \nabla \eta\vert^{2} dx\Big\}\\
&\leq& C(N, 2)\Big\{\Big(\int(\eta\psi_{n}\psi_{L, n}^{(2^{*}-2)/2} )^{2^{*}}dx\Big)^{2/2^{*}}\vert \psi_{n}\vert_{2^{*}(\vert x\vert\geq R/2)}^{2^{*}-2}\\
&&+\int\psi_{L, n}^{2^{*}-2}\vert \psi_{n}\vert^{2} \vert \nabla \eta\vert^{2} dx\Big\}.
\end{eqnarray*}
In view of $\psi_{n}\rightarrow \psi$ in $H^{1}(\mathbb{R}^N)$, for $R$ large enough, we conclude that
$$
\vert \psi_{n}\vert_{2^{*}(\vert x\vert\geq R/2)}^{2^{*}-2}\leq \frac{1}{2 C(N, 2)}\quad\text{uniformly in}\,\,n\in \mathbb{N}.
$$
Hence we obtain
\begin{eqnarray*}
\Big(\int_{\vert x\vert\geq R} (\psi_{n}\psi_{L, n}^{(2^{*}-2)/2})^{2^{*}}\Big)^{2/2^{*}}
&\leq& 2C(N, 2)\int \psi_{L, n}^{2^{*}-2}\vert \psi_{n}\vert^{2} \vert \nabla \eta\vert^{2} dx\\
&\leq& \frac{C}{r^{2}}\int \vert \psi_{n}\vert^{2^{*}}  dx.
\end{eqnarray*}
Using the Fatou's lemma in the variable $L$, we have
\begin{eqnarray}\label{ine37}
\psi_{n}\in L^{{{2^{*}}^{2}}/2}(\vert x\vert\geq R)\quad \text{for}\,\, R\,\,\, \text{large enough}.
\end{eqnarray}
Next, we note that if $\beta=2^{*}(t-1)/2t$ with $t={2^{*}}^{2}/2(2^{*}-2)$, then $\beta>1$ and
$2t/(t-1)<2^{*}$. Now suppose that $\psi_{n}\in L^{2\beta t/(t-1)}(\vert x\vert\geq R-r)$ for some $\beta\geq 1$.
Using the H\"{o}lder inequality with exponent $t/(t-1)$ and $t$, then (\ref{ine37}) gives that
\begin{eqnarray}\label{ine38}
\vert\omega_{L, n}\vert_{2^{*}}^{2}&\leq & C\beta^{2}\Big\{\Big(\int_{\vert x\vert\geq R-r}(\eta^{2}\psi_{n}^{2\beta})^{t/(t-1)}dx\Big)^{1-1/t}\Big(\int_{\vert x\vert\geq R-r} \vert \psi_{n}\vert^{(2^{*}-2)t}\Big)^{1/t}\nonumber\\
&&+\frac{(R^{N}-(R-r)^{N})^{1/t}}{r^{2}}\Big(\int_{\vert x\vert\geq R-r}\vert \psi_{n}\vert^{2\beta t/(t-1)}  dx\Big)^{1-1/t}\Big\}\nonumber\\
&\leq & C\beta^{2}\Big(1+\frac{R^{N/t}}{r^{2}}\Big)\Big(\int_{\vert x\vert\geq R-r}\vert \psi_{n}\vert^{2\beta t/(t-1)}  dx\Big)^{1-1/t}.
\end{eqnarray}
Letting $L\rightarrow +\infty$ in (\ref{ine38}), we obtain
$$
\vert \psi_{n}\vert_{2^{*}\beta(\vert x\vert\geq R)}^{2\beta}\leq  C\beta^{2}\Big(1+\frac{R^{N/t}}{r^{2}}\Big)\vert \psi_{n}\vert_{2\beta t/(t-1)(\vert x\vert\geq R-r)}^{2\beta}.
$$
If we set $\chi:=2^{*}(t-1)/(2t)$, $s:=2t/(t-1)$, then
\begin{eqnarray}\label{ine39}
\vert \psi_{n}\vert_{\beta \chi s(\vert x\vert\geq R)}\leq  C^{1/\beta}\beta^{1/\beta}\Big(1+\frac{R^{N/t}}{r^{2}}\Big)^{1/(2\beta)}\vert \psi_{n}\vert_{\beta s(\vert x\vert\geq R-r)}.
\end{eqnarray}
Let $\beta=\chi^{m}(m=1, 2, \cdots)$, we obtain
$$
\vert \psi_{n}\vert_{\chi^{m+1} s(\vert x\vert\geq R)}\leq  C^{\chi^{-m}}\chi^{m\chi^{-m}}\Big(1+\frac{R^{N/t}}{r^{2}}\Big)^{1/(2\beta)}\vert \psi_{n}\vert_{\chi^{m} s(\vert x\vert\geq R-r)}.
$$
It is clear that $2>N/t$. So if we take $r_{m}=2^{-(m+1)}R$, then (\ref{ine39}) implies
\begin{eqnarray*}
\vert \psi_{n}\vert_{\chi^{m+1} s(\vert x\vert\geq R)}&\leq & \vert \psi_{n}\vert_{\chi^{m+1} s(\vert x\vert\geq R-r_{m+1})}\\
&\leq &C^{\sum_{i=1}^{m}\chi^{-i}}\chi^{\sum_{i=1}^{m} i\chi^{-i}}\exp \Big( \sum_{i=1}^{m}\frac{\ln(1+2^{2(i+1)})}{2\chi^{i}}\Big)\vert \psi_{n}\vert_{\chi s(\vert x\vert\geq R-r_{1})}\\
&\leq &C\vert \psi_{n}\vert_{2^{*}(\vert x\vert\geq R/2)}.
\end{eqnarray*}
Letting $m\rightarrow\infty$ in the last inequality, we get
\begin{eqnarray}\label{ine40}
\vert \psi_{n}\vert_{L^{\infty}(\vert x\vert\geq R)}\leq C\vert \psi_{n}\vert_{2^{*}(\vert x\vert\geq R/2)}.
\end{eqnarray}
Using $\psi_{n}\rightarrow \psi$ in $H^{1}(\mathbb{R}^N)$ again, for any fixed $a>0$, there exists $R>0$ such that
$\vert \psi_{n}\vert_{L^{\infty}(\vert x\vert\geq R)}\leq a$ for all $n\in \mathbb{N}$. Therefore, $\underset{\vert x\vert\rightarrow\infty}{\lim} \psi_{n}(x)=0$
uniformly in $n$.

To show that $\vert \psi_{n}\vert_{L^{\infty}(\mathbb{R}^N)}<+\infty$, we need only show that for any $x_{0}\in \mathbb{N}$,
there is a ball $B_{R}(x_{0})=\{x\in \mathbb{R}^N): \vert x-x_{0}\vert \leq R\}$ such that $\vert \psi_{n}\vert_{L^{\infty}(B_{R}(x_{0}))}<+\infty$.
We can use the same arguments and take $\eta\in C^{\infty}(\mathbb{R}^N)$, $0\leq\eta\leq 1$ with $\eta(x)=1$ if $\vert x-x_{0}\vert\leq \rho'$ and $\eta(x)=0$
if $\vert x-x_{0}\vert> 2\rho'$ and $\vert \nabla \eta\vert\leq\frac{2}{\rho'}$, to prove that
\begin{eqnarray}\label{ine41}
\vert \psi_{n}\vert_{L^{\infty}(\vert x-x_{0}\vert\leq \rho')}\leq C\vert \psi_{n}\vert_{2(\vert x\vert\geq 2\rho')}.
\end{eqnarray}
From (\ref{ine40}) and (\ref{ine41}), using a standard covering argument it follows that
\begin{eqnarray*}
\vert \psi_{n}\vert_{L^{\infty}(\mathbb{R}^N)}\leq C
\end{eqnarray*}
for some positive constant $C$. \\

For the case $N=2$, similar with the proof for the case $N\geq 3$, we also let  $z_{L, n}=\eta^{2}\psi_{L, n}^{2(\beta-1)}\psi_{n}$ and  $\omega_{L, n}=\eta\psi_{n}\psi_{L, n}^{\beta-1}$
with $\beta>1$ to be determined later. Since $0\leq\eta\leq 1$ and $\psi_{L, n}\leq L$, it yields that $\eta^{2}\psi_{L, n}^{2(\beta-1)}\psi_{n}\leq L^{2(\beta-1)}\psi_{n}$ and $F'_{1}(\psi_{n})\eta^{2}\psi_{L, n}^{2(\beta-1)}\psi_{n}\in L^{1}(\mathbb{R}^N)$. Taking
$z_{L, n}$ as a test function, we have
\begin{eqnarray}\label{ine51}
&&\int\eta^{2}\psi_{L, n}^{2(\beta-1)}\vert \nabla \psi_{n}\vert^{2}dx+\int(V(\epsilon_{n}x+\epsilon_{n}y_{n})+1)\eta^{2}\psi_{L, n}^{2(\beta-1)}\vert  \psi_{n}\vert^{2}dx+\int F'_{1}(\psi_{n})\eta^{2}\psi_{L, n}^{2(\beta-1)}\psi_{n}dx\nonumber\\
&=&-2\int \eta\psi_{L, n}^{2(\beta-1)}\psi_{n} \nabla \psi_{n}\nabla \eta dx-2(\beta-1)\int \eta^{2}\psi_{L, n}^{2\beta-3}\psi_{n} \nabla \psi_{n}\nabla \psi_{L, n} dx\nonumber\\
&&+\int G'_{2}(\epsilon_{n}x+\epsilon_{n}y_{n}, \psi_{n})\eta^{2}\psi_{L, n}^{2(\beta-1)}\psi_{n}dx.
\end{eqnarray}
From the definition of $G_{2}$, for any $t\geq 0$ and $x\in \mathbb{R}^N$,  we have that
\begin{eqnarray}\label{ine52}
G'_{2}( x, t)\leq lt+Ct^{q-1},
\end{eqnarray}
where $2<q<\infty$.

By (\ref{eq2}), (\ref{ine51}) and (\ref{ine52}), we  obtain that
\begin{eqnarray}\label{ine53}
\int \eta^{2}\psi_{L, n}^{2(\beta-1)}\vert \nabla \psi_{n}\vert^{2}dx\leq 2\int \eta\psi_{L, n}^{2(\beta-1)}\psi_{n} \vert \nabla \psi_{n}\vert \vert\nabla \eta \vert dx
+C\int \eta^{2}\psi_{L, n}^{2(\beta-1)}\vert \psi_{n}\vert^{q}dx.
\end{eqnarray}
For any $\delta>0$, using Young's inequality, we have from (\ref{ine53}) that
\begin{eqnarray*}
\int \eta^{2}\psi_{L, n}^{2(\beta-1)}\vert \nabla \psi_{n}\vert^{2}dx&\leq& 2\delta\int \eta^{2}\psi_{L, n}^{2(\beta-1)} \vert \nabla \psi_{n}\vert^{2}dx+2C_{\delta}\int \psi_{L, n}^{2(\beta-1)}\vert \psi_{n}\vert^{2} \vert \nabla \eta\vert^{2} dx\\
&&+C\int_{\mathbb{R}^N}\eta^{2}\psi_{L, n}^{2(\beta-1)}\vert \psi_{n}\vert^{q}dx.
\end{eqnarray*}
Choosing $\delta\in (0, \frac{1}{4})$, it yields
\begin{eqnarray}\label{ine54}
\int \eta^{2}\psi_{L, n}^{2(\beta-1)}\vert \nabla \psi_{n}\vert^{2}dx\leq C\int \psi_{L, n}^{2(\beta-1)}\vert \psi_{n}\vert^{2} \vert \nabla \eta\vert^{2} dx+C\int \eta^{2}\psi_{L, n}^{2(\beta-1)}\vert \psi_{n}\vert^{q}dx.
\end{eqnarray}
On the other hand, by the Sobolev embedding,
\begin{eqnarray}\label{ine55}
\vert\omega_{L, n}\vert_{q}^{2}\leq C\beta^{2}\Big(\int \psi_{L, n}^{2(\beta-1)}\vert \psi_{n}\vert^{2} \vert \nabla \eta\vert^{2} dx+\int \eta^{2}\psi_{L, n}^{2(\beta-1)}\vert \nabla\psi_{n}\vert^{2}dx \Big).
\end{eqnarray}
Using (\ref{ine54}) and (\ref{ine55}), we have
\begin{eqnarray}\label{ine56}
\vert\omega_{L, n}\vert_{q}^{2}
\leq C\beta^{2}\Big(\int \psi_{L, n}^{2(\beta-1)}\vert \psi_{n}\vert^{2} \vert \nabla \eta\vert^{2} dx+\int \eta^{2}\psi_{L, n}^{2(\beta-1)}\vert \psi_{n}\vert^{q}dx\Big).
\end{eqnarray}
Let $\beta=\frac{q}{2}$, by the definition of $\omega_{L, n}$ and (\ref{ine36}), we rewrite the last inequality as
\begin{eqnarray*}
\Big(\int (\eta\psi_{n}\psi_{L, n}^{(q-2)/2})^{q}\Big)^{2/q}
&\leq& C(2, 2)\Big\{\Big(\int(\eta\psi_{n}\psi_{L, n}^{(q-2)/2} )^{q}dx\Big)^{2/q}\Big(\int_{\vert x\vert\geq R-r} \vert \psi_{n}\vert^{q}\Big)^{(q-2)/q}\\
&&+\int\psi_{L, n}^{q-2}\vert \psi_{n}\vert^{2} \vert \nabla \eta\vert^{2} dx\Big\}\\
&\leq& C(2, 2)\Big\{\Big(\int(\eta\psi_{n}\psi_{L, n}^{(q-2)/2} )^{q}dx\Big)^{2/q}\vert \psi_{n}\vert_{q(\vert x\vert\geq R/2)}^{q-2}\\
&&+\int\psi_{L, n}^{q-2}\vert \psi_{n}\vert^{2} \vert \nabla \eta\vert^{2} dx\Big\}.
\end{eqnarray*}

In view of $\psi_{n}\rightarrow \psi$ in $H^{1}(\mathbb{R}^N)$, we have $\psi_{n}\rightarrow \psi$ in $L^{q}(\mathbb{R}^N)$. Thus, for $R$ large enough, we conclude that
$$
\vert \psi_{n}\vert_{q(\vert x\vert\geq R/2)}^{q-2}\leq \frac{1}{2 C(2, 2)}\quad\text{uniformly in}\,\,n\in \mathbb{N}.
$$
Hence we obtain
\begin{eqnarray*}
\Big(\int_{\vert x\vert\geq R} (\psi_{n}\psi_{L, n}^{(q-2)/2})^{q}\Big)^{2/q}
&\leq& 2C(2, 2)\Big(\int\psi_{L, n}^{q-2}\vert \psi_{n}\vert^{2} \vert \nabla \eta\vert^{2} dx+\int \eta^{2} \psi_{L, n}^{q-2}\vert \psi_{n}\vert^{2} dx\Big)\\
&\leq& \frac{C}{r^{2}}\int \vert \psi_{n}\vert^{q}  dx.
\end{eqnarray*}
Using the Fatou's lemma in the variable $L$, we have
\begin{eqnarray}\label{ine57}
\psi_{n}\in L^{{{q}^{2}}/2}(\vert x\vert\geq R)\quad \text{for}\,\, R\,\,\, \text{large enough}.
\end{eqnarray}
Next, we note that if $\beta=q(t-1)/2t$ with $t={q^{2}}/2(q-2)$, then $\beta>1$ and
$2t/(t-1)<q$. Now suppose that $\psi_{n}\in L^{2\beta t/(t-1)}(\vert x\vert\geq R-r)$ for some $\beta\geq 1$.
Using the H\"{o}lder inequality with exponent $t/(t-1)$ and $t$, then (\ref{ine56}) gives that
\begin{eqnarray}\label{ine58}
\vert\omega_{L, n}\vert_{q}^{2}&\leq & C\beta^{2}\Big\{\Big(\int_{\vert x\vert\geq R-r}(\psi_{n}^{2\beta})^{t/(t-1)}dx\Big)^{1-1/t}\Big(\int_{\vert x\vert\geq R-r} \vert \psi_{n}\vert^{(q-2)t}\Big)^{1/t}\nonumber\\
&&+\frac{(R^{2}-(R-r)^{2})^{1/t}}{r^{2}}\Big(\int_{\vert x\vert\geq R-r}\vert \psi_{n}\vert^{2\beta t/(t-1)}  dx\Big)^{1-1/t}\Big\}\nonumber\\
&\leq & C\beta^{2}\Big(1+\frac{R^{2/t}}{r^{2}}\Big)\Big(\int_{\vert x\vert\geq R-r}\vert \psi_{n}\vert^{2\beta t/(t-1)}  dx\Big)^{1-1/t}.
\end{eqnarray}
Letting $L\rightarrow +\infty$ in (\ref{ine58}), we obtain
$$
\vert \psi_{n}\vert_{q\beta(\vert x\vert\geq R)}^{2\beta}\leq  C\beta^{2}\Big(1+\frac{R^{2/t}}{r^{2}}\Big)\vert \psi_{n}\vert_{2\beta t/(t-1)(\vert x\vert\geq R-r)}^{2\beta}.
$$
If we set $\chi:=q(t-1)/(2t)$, $s:=2t/(t-1)$, then
\begin{eqnarray}\label{ine39}
\vert \psi_{n}\vert_{\beta \chi s(\vert x\vert\geq R)}\leq  C^{1/\beta}\beta^{1/\beta}\Big(1+\frac{R^{2/t}}{r^{2}}\Big)^{1/(2\beta)}\vert \psi_{n}\vert_{\beta s(\vert x\vert\geq R-r)}.
\end{eqnarray}
Let $\beta=\chi^{m}(m=1, 2, \cdots)$, we obtain
$$
\vert \psi_{n}\vert_{\chi^{m+1} s(\vert x\vert\geq R)}\leq  C^{\chi^{-m}}\chi^{m\chi^{-m}}\Big(1+\frac{R^{2/t}}{r^{2}}\Big)^{1/(2\beta)}\vert \psi_{n}\vert_{\chi^{m} s(\vert x\vert\geq R-r)}.
$$
It is clear that $2>2/t$. So if we take $r_{m}=2^{-(m+1)}R$, then (\ref{ine39}) implies
\begin{eqnarray*}
\vert \psi_{n}\vert_{\chi^{m+1} s(\vert x\vert\geq R)}&\leq & \vert \psi_{n}\vert_{\chi^{m+1} s(\vert x\vert\geq R-r_{m+1})}\\
&\leq &C^{\sum_{i=1}^{m}\chi^{-i}}\chi^{\sum_{i=1}^{m} i\chi^{-i}}\exp \Big( \sum_{i=1}^{m}\frac{\ln(1+2^{2(i+1)})}{2\chi^{i}}\Big)\vert \psi_{n}\vert_{\chi s(\vert x\vert\geq R-r_{1})}\\
&\leq &C\vert \psi_{n}\vert_{q(\vert x\vert\geq R/2)}.
\end{eqnarray*}
Letting $m\rightarrow\infty$ in the last inequality, we get
\begin{eqnarray}\label{ine60}
\vert \psi_{n}\vert_{L^{\infty}(\vert x\vert\geq R)}\leq C\vert \psi_{n}\vert_{q(\vert x\vert\geq R/2)}.
\end{eqnarray}
Using $\psi_{n}\rightarrow \psi$ in $H^{1}(\mathbb{R}^2)$ again, for any fixed $a>0$, there exists $R>0$ such that
$\vert \psi_{n}\vert_{L^{\infty}(\vert x\vert\geq R)}\leq a$ for all $n\in \mathbb{N}$. Therefore, $\underset{\vert x\vert\rightarrow\infty}{\lim} \psi_{n}(x)=0$
uniformly in $n$.

Similarly, in order to show that $\vert \psi_{n}\vert_{L^{\infty}(\mathbb{R}^2)}<+\infty$, we need only show that for any $x_{0}\in \mathbb{R}^2$,
there is a ball $B_{R}(x_{0})=\{x\in \mathbb{R}^2: \vert x-x_{0}\vert \leq R\}$ such that $\vert \psi_{n}\vert_{L^{\infty}(B_{R}(x_{0}))}<+\infty$.
We can use the same arguments and take $\eta\in C^{\infty}(\mathbb{R}^2)$, $0\leq\eta\leq 1$ with $\eta(x)=1$ if $\vert x-x_{0}\vert\leq \rho'$ and $\eta(x)=0$
if $\vert x-x_{0}\vert> 2\rho'$ and $\vert \nabla \eta\vert\leq\frac{2}{\rho'}$, to prove that
\begin{eqnarray}\label{ine61}
\vert \psi_{n}\vert_{L^{\infty}(\vert x-x_{0}\vert\leq \rho')}\leq C\vert \psi_{n}\vert_{2(\vert x\vert\geq 2\rho')}.
\end{eqnarray}
From (\ref{ine60}) and (\ref{ine61}), using a standard covering argument it follows that
\begin{eqnarray*}
\vert \psi_{n}\vert_{L^{\infty}(\mathbb{R}^2)}\leq C
\end{eqnarray*}
for some positive constant $C$.

For the case $N=1$, we can deal with in the same way as the case $N=2$, we omit the details. Thus we have completed the proof of Lemma \ref{le51}.
\end{proof}

\section{Proof of Theorem \ref{teorema}}

By Theorem \ref{teorema1}, we know that problem (\ref{1'}) has a positive solution $u_{\epsilon}$ for all $\epsilon>0$.
On the other hand, by Lemma \ref{lea}, there  exists a sequence $(y_{n})\subset \mathbb{R}^N$ with $\displaystyle \lim_{n\rightarrow\infty}\epsilon_{n}y_{n}=y_{0}$ and $V(y_{0})=V_{0}$. Now, we can find $r>0$, such that $B_{r}(y_{n})\subset \Lambda$ for all $n\in N$.
Therefore $B_{r/\epsilon_{n}}(y_{n})\subset \Lambda_{\epsilon_{n}}$, $n\in N$. As a consequence
$$
\mathbb{R}^{N}\backslash \Lambda_{\epsilon_{n}}\subset \mathbb{R}^{N}\backslash B_{r/\epsilon_{n}}(y_{n})\quad\text{for any}\,\,n\in N.
$$
By using Lemma \ref{le51}, there exists $R>0$ such that
$$
  \psi_{n}(x)<a_{0}\quad\text{for}\,\,\vert x\vert>R, \,\,n\in N,
$$
where $\psi_{n}(x)=u_{\epsilon_{n}}(x+y_{n})$. Hence $u_{\epsilon_{n}}<a_{0}$ for any $x\in \mathbb{R}^{N}\backslash B_{R}(y_{n})$ and $n\in N$. Then there exists $n_{0}\in N$ such that for any $n\geq n_{0}$ and $r/\epsilon_{n} >R$ it holds
$$
\mathbb{R}^{N}\backslash \Lambda_{\epsilon_{n}}\subset \mathbb{R}^{N}\backslash B_{r/\epsilon_{n}}(y_{n})\subset \mathbb{R}^{N}\backslash B_{R}(y_{n}),
$$
which gives $u_{\epsilon_{n}}<a_{0}$ for any $x\in \mathbb{R}^{N}\backslash \Lambda_{\epsilon_{n}}$ and $n\geq n_{0}$.

This means that there exists $\epsilon_{0}>0$, problem (\ref{1'}) has a positive solution $u_{\epsilon}$ for all $\epsilon\in (0, \epsilon_{0})$. Taking
$v_{\epsilon}(x)=u_{\epsilon}(\frac{x}{\epsilon})$, we can infer that $v_{\epsilon}$ is a solution to problem $(P_\epsilon)$.

 Finally, we study
the behavior of the maximum points of $v_{\epsilon}(x)$. Take $\epsilon_{n}\rightarrow 0$ and $(u_{\epsilon_{n}})$ a sequence of solutions to problem (\ref{1'}). By the definition of $G_{2}$, there exists $\gamma\in (0, a_{0})$ such that
\begin{equation*}
G'_{2}(\epsilon x, t)t\leq lt^{2},\quad\text{for all}\,\, x\in \mathbb{R}^{N},\, 0\leq t\leq\gamma.
\end{equation*}
Using a similar argument above, we can take $R>0$ such that
\begin{equation}\label{5.14}
\Vert u_{\epsilon_{n}}\Vert_{L^{\infty}(B_{R}^{c}(y_{n}))}<\gamma.
\end{equation}
Up to a subsequence, we may also assume that
\begin{equation}\label{5.15}
\Vert u_{\epsilon_{n}}\Vert_{L^{\infty}(B_{R}(y_{n}))}\geq\gamma.
\end{equation}
Indeed, if \eqref{5.15} does not hold, we have $\Vert u_{\epsilon_{n}}\Vert_{L^{\infty}(\mathbb{R}^{N})}<\gamma$, and it follows from
$I'_{\epsilon_{n}}(u_{\epsilon_{n}})=0$ that
\begin{align*}
\int (\vert\nabla  u_{\epsilon_{n}}\vert^{2}+ (V_{0}+1)\vert u_{\epsilon_{n}}\vert^{2})dx\leq \Vert u_{\epsilon_{n}}\Vert^{2}_{\epsilon_{n}}\leq \int G'_{2}(\epsilon_{n} x,  u_{\epsilon_{n}}) u_{\epsilon_{n}}dx\leq l\int \vert u_{\epsilon_{n}}\vert^{2}dx.
\end{align*}
This fact shows  $u_{\epsilon_{n}}\equiv 0$ which is a contradiction. Hence \eqref{5.15} holds.

Taking into account \eqref{5.14} and \eqref{5.15}, we can infer that the global maximum points $p_{n}$ of $u_{\epsilon_{n}}$ belongs to $B_{R}(y_{n})$, that is
$p_{n}=q_{n}+y_{n}$ for some $q_{n}\in B_{R}$. Recalling that the associated solution of problem $(P_\epsilon)$ is of form $v_{n}(x)=u_{\epsilon_{n}}(x/\epsilon_{n})$ , we can see that
a  maximum point $\eta_{\epsilon_{n}}$ of $ \hat{v}_{n}$ is $\eta_{\epsilon_{n}}=\epsilon_{n}y_{n}+\epsilon_{n}q_{n}$. Since $q_{n}\in B_{R}$,
$\epsilon_{n}y_{n}\rightarrow y_{0}$ and $V(y_{0})=V_{0}$, from the continuity of $V$, we can conclude that
\begin{equation*}
\lim_{n\rightarrow\infty}V(\eta_{\epsilon_{n}})=V_{0},
\end{equation*}
which concludes the proof of the theorem.

\section*{Acknowledgements} The authors would like to thank the anonymous referees for their valuable suggestions and comments.

\vspace{0.5 cm}

\vspace{1 cm}

\noindent \textsc{Claudianor O. Alves } \\
Unidade Acad\^{e}mica de Matem\'atica\\
Universidade Federal de Campina Grande \\
Campina Grande, PB, CEP:58429-900, Brazil \\
\texttt{coalves@mat.ufcg.edu.br} \\
\noindent and \\
\noindent \textsc{Chao Ji}(Corresponding Author) \\
Department of Mathematics\\
East China University of Science and Technology \\
Shanghai 200237, PR China \\
\texttt{jichao@ecust.edu.cn}

\end{document}